\DeclareMathAlphabet{\mathpzc}{OT1}{pzc}{m}{it} 
\newtheorem{Thm}{Theorem}[section]
\newtheorem{Cor}[Thm]{Corollary}
\newtheorem{Lem}[Thm]{Lemma}
\newtheorem{Prop}[Thm]{Proposition}
\theoremstyle{definition}
\newtheorem{Rem}[Thm]{Remark}
\theoremstyle{definition}
\theoremstyle{definition}
\newtheorem{Def}[Thm]{Definition}
\newtheorem{Exa}[Thm]{Examples}
\newtheorem{Example}[Thm]{Example}
\theoremstyle{definition} 
\def\vuoto{\varnothing} 
\newcommand\funzione{\longrightarrow}
\newcommand\enne{\mathbb{N}} 
\providecommand{\opint}[1]{\hspace{0.15ex}\left]#1\right[\hspace{0.15ex}} 
\providecommand{\clint}[1]{\hspace{0.045ex}[#1]} 
\providecommand{\clsxint}[1]{\hspace{0.1ex}\left[#1\right[\hspace{0.15ex}} 
\providecommand{\cldxint}[1]{\hspace{0.15ex}\left]#1\right]} 
\newcommand\erre{\mathbb{R}} 
\newcommand\ci{\mathbb{C}} 
\newcommand\alg{\mathbb{A}} 
\renewcommand\j{\mathbf{j}} 
\renewcommand\k{\mathbf{k}} 
\DeclareMathOperator{\re}{Re} 
\DeclareMathOperator{\im}{Im} 
\DeclareMathOperator{\de}{d \! \hspace{0.13ex}} 
\renewcommand\r{\textsl{r}} 
\newcommand\End{\textsl{End}}
\newcommand\norma[2]{\Vert #1\Vert_{#2}} 
\newcommand\Lin{\mathscr{L}}
\newcommand\GL{\mathscr{G\! L}}
\newcommand{\Id}{\ \!\mathsf{Id}}
\newcommand{\A}{\mathsf{A}}
\newcommand{\B}{\mathsf{B}}
\newcommand{\C}{\mathsf{C}}
\newcommand{\R}{\mathsf{R}}
\newcommand{\Q}{\mathsf{Q}}
\newcommand{\T}{\mathsf{T}}
\newcommand{\U}{\mathsf{U}}
\newcommand\s{\mathpzc{s}} 
\newcommand{\su}{\mathbb{S}}
\newcommand{\lra}{\longrightarrow}
\newcommand{\RR}{\mathbb{R}}
\newcommand{\CC}{\mathbb{C}}
\newcommand{\OO}{\Omega}
\newcommand{\mr}{\mathrm}
\newcommand{\mi}{\mathit}
\newcommand{\mscr}{\mathscr}
\newcommand{\ray}{\mscr{R}}
\newcommand{\ce}{\mscr{C}}
\newcommand{\q}{\mathbb{H}}
\newcommand{\oc}{\mathbb{O}}
\newcommand{\mc}{\mathcal}
\newcommand{\ui}{\mr{i}}
\newcommand{\II}{\mc{I}}
\renewcommand\sp{\hspace{2.8ex}} 
\begin{document}


\markboth{left head}{right head} \markright{Operator semigroups over real *-algebras}

\title{Semigroups over real alternative *-algebras: \\ generation theorems and spherical sectorial operators}

\author{Riccardo Ghiloni\footnote{Dipartimento di Matematica, Universit\`a di Trento, Via Sommarive 14, 38123 Povo-Trento (TN), Italy (\texttt{ghiloni@science.unitn.it})} $\;$ and Vincenzo Recupero\footnote{Dipartimento di Scienze Matematiche, Politecnico di Torino, Corso Duca degli Abruzzi 24, 10129 Torino, Italy (\texttt{vincenzo.recupero@polito.it})}}

\date{}


\maketitle

\thispagestyle{empty}


\begin{abstract}
The aim of this paper is twofold. On one hand, generalizing some recent results obtained in the quaternionic setting, but using simpler techniques, we prove the generation theorems for semigroups in Banach spaces whose set of scalars belongs to the class of real alternative *-algebras, which includes, besides real and complex numbers, quaternions, octonions and Clifford algebras. On the other hand, in this new general framework, we introduce the notion of spherical sectorial operator and we prove that a spherical sectorial operator generates a semigroup that can be represented by a Cauchy integral formula. It follows that such a semigroup is analytic in time.

\ 

\begin{footnotesize}
\noindent
\emph{Keywords}: Functions of hypercomplex variables; Functional calculus; Semigroups of linear operators;
Spectrum, resolvent
 \newline
\noindent
\emph{2010 AMS Subject Classification}: 30G35, 47D03, 47A60, 47A10, 
\end{footnotesize}
\end{abstract}




\section{Introduction}

In the recent years, a great deal of attention has been devoted to a rigorous development of a functional calculus for right linear operators defined on Banach spaces (Banach bimodules to be precise) over the real algebra $\mathbb{H}$ of quaternions (see \cite{CoSaSt08, CoSa09, CoSa10, CoGeSaSt, CoGeSaSt2, CoSaSt, GMP}). One of the main motivations has to be found in the study of quantum mechanics in the quaternionic framework (see, e.g., \cite{FiJaScSp, Em, HoBi, Ad}). Indeed, as pointed out in seminal paper \cite{BiNeu}, quantum mechanics may be formulated, not only on complex Hilbert spaces (the classical setting), but also on Hilbert spaces having $\q$ as the set of scalars.

This new quaternionic functional calculus strongly relies on the theory of slice regular functions. The notion of slice regular function was recently introduced in \cite{GeSt} as a generalization to quaternions of the classical concept of holomorphic function of a complex variable. 
Let $\su$ be the subset of $\q$ of square roots of $-1$ and, for each $\j \in \su$, let $\CC_\j$ be the plane of $\q$ generated by $1$ and $\j$. Observe that each $\CC_\j$ is a copy the complex plane. The quaternions has a ``slice complex'' nature, described by the following two properties: $\q=\bigcup_{\j \in \su}\CC_\j$ and $\CC_\j \cap \CC_\k=\RR$ for every $\j,\k \in \su$ with $\j \neq \pm \k$. Let $D$ be a connected open subset of $\CC$ invariant under complex conjugation and let $\OO_D$ be the open subset of $\q$ obtained by rotating $D$ around $\RR$, i.e. $\OO_D=\bigcup_{\j \in \su}D_\j$, where $D_\j:=\{a+b \, \j \in \CC_\j \, | \, a,b \in \RR, a+bi \in D\}$. A function $f:\OO_D \lra \q$ of class $C^1$ is called \emph{slice regular} if, for every $\j \in \su$, its restriction $f_\j$ to $D_\j$ is holomorphic with respect to the complex structures on $D_\j$ and $\q$ defined by the left multiplication by $\j$, i.e. if $\partial f_\j/\partial a+\j \, \partial f_\j/\partial b=0$ on $D_\j$. A major property of slice regular functions is a Cauchy-type integral formula (see \cite{CoGeSa}). If $D$ is bounded and its boundary is piecewise of class $C^1$, and $f$ is slice and extends to a continuous function on the closure of $\OO_D$ in $\q$, then it holds:
\begin{equation}\label{intro:cauchy formula}
f(p)=\frac{1}{2\pi}\int_{\partial D_\j} C_q(p) \, \j^{-1} \de q \ f(q) \qquad \forall p \in \OO_D, \forall \j \in \su,
\end{equation}
where $C_q(p)$ denotes the
\emph{noncommutative Cauchy kernel}
\[
C_q(p) :=(p^2 - 2\re(q) p + |q|^2)^{-1}(\overline{q}-p),
\]
the line integral in \eqref{intro:cauchy formula} being defined in a natural way (see Appendix-Section \ref{S:appendix} in the case $V=\alg$). The unusual fact that the differential $\de q$ appears on the left of $f(q)$ depends on the noncommutativity of $\q$. Notice that, if $p$ and $q$ commute (for example when $p,q$ belong to the same $\CC_\j$), then it turns out that $C_q(p)=(q-p)^{-1}$ and we find again the form of the classical Cauchy kernel for holomorphic functions.

The theory of quaternionic slice regular functions has been extended to octonions in \cite{GeSt-rocky}. The related theory of slice monogenic functions on Clifford algebras $\RR_n$ was introduced in \cite{CoSaSt09}. These function theories were unified, generalized and developed in \cite{GhPe} by means of the concept of slice function on a real alternative *-algebra. A real algebra $\alg$ is \emph{alternative} if it satisfies the following weak associativity condition: $\alpha(\alpha \beta)=(\alpha\alpha)\beta$ and $(\alpha\beta)\beta=\alpha(\beta\beta)$ for every $\alpha,\beta \in \alg$. Evidently, all the real associative *-algebras, like real and complex numbers, quaternions and Clifford algebras, are alternative. The algebra of octonions is a not associative example of real alternative *-algebra.

As observed in 
\cite{CoGeSaSt,GMP}, 
the classical notions of resolvent set and spectrum are not useful in order to define a noncommutative functional calculus. A promising definition of resolvent set is suggested by the Cauchy integral formula \eqref{intro:cauchy formula} and was given for the first time in \cite{CoSaSt08}. 
If $\A$ is a right linear operator on a quaternionic Banach bimodule, then its \emph{spherical resolvent set} is the set of quaternions $q$ such that the operator
\[
\Delta_q(\A) := \A^2 - 2\re(q)\ \!\A + |q|^2 \Id
\]
is bijective and its inverse is bounded. Accordingly, the \emph{spherical spectrum} is the complement of the spherical resolvent set and the \emph{spherical resolvent operator} $\C_q(\A)$ is defined by
\[
\C_q(\A) := \Delta_q(\A)^{-1}\overline{q} - \A\Delta_q(\A)^{-1}
\]
for every $q$ in the spherical resolvent set of $\A$. These new spectral notions are the starting point of a rigorous treatment of the noncommutative functional calculus (see \cite{CoSaSt}) and now provide the basis for a proper application of the spectral theory to quantum mechanics in the quaternionic setting (see \cite{GMP}).

The next natural stage in this analysis is the development of a noncommutative theory of right linear operator semigroups. 
This task was undertaken in \cite{CoSa}, where the functional calculus is used in order to prove the counterpart of the classical generation theorems by Hille-Yosida and by Feller-Miyadera-Phillips for right linear operators acting on quaternionic Banach bimodules.

The first aim of our work is to extend these generation theorems to the general framework of real alternative *-algebras, by using 
a completely different technique, which does not make use of the functional calculus. In fact, by means of a simple lemma, we reduce our generalized generation theorems to the classical commutative case.

Our second purpose is to study a natural extension of the notion of sectorial operator to the general setting of real alternative *-algebras. We introduce the concept of \emph{spherical sectorial operator} and we prove that such an operator generates a semigroup analytic in time, which can be represented by a Cauchy integral formula. In this case, a suitably generalized functional calculus will play a prominent role. The concept of spherical sectorial operator and the analyticity in time of the corresponding semigroups are new even in the quaternionic framework. A spherical sectorial operator on a quaternionic Banach bimodule $X$ is defined as follows. First, we define the function $\arg:\q \setminus \{0\} \lra \clint{0,\pi}$. If $q \in \q \setminus \RR$, then there exist, and are unique, $\j \in \su$, $r \in \opint{0,\infty}$ and $\theta \in \opint{0,\pi}$ such that $q=r e^{\theta\j} \in \mathbb{C}_\j$. We set: $\arg(q):=\theta$. Moreover, we define: $\arg(q):=0$ if $q \in \opint{0,\infty}$ and $\arg(q):=\pi$ if $q \in \opint{-\infty,0}$. Given a right linear operator $\A$ on $X$, we say that $\A$ is a \emph{spherical sectorial operator} if its spherical resolvent set contains a set $\Sigma_\delta$ of the form
\[
\Sigma_\delta=\{q \in \q \setminus \{0\} \, : \, \arg(q)<\pi/2+\delta\}
\]
for some $\delta \in \opint{0,\pi/2}$. We prove that, if $\A$ has this property and satisfies the estimate 
\[
\norma{\C_q(\A)}{} \le \frac{M}{|q|} \qquad \forall q \in \Sigma_\delta
\]
for some $M \ge 0$, then $\A$ is the generator of the semigroup
\begin{equation}\label{intro:e^At}
\T(t) = \frac{1}{2\pi} \int_{\Gamma_\j} \C_\alpha(\A) \, \j^{-1}e^{t\alpha} \de \alpha, \qquad \forall t > 0, 
\end{equation}
where $\j$ is an arbitrarily fixed element of $\su$ and $\Gamma_\j$ is a suitable path of $\CC_\j$, surrounding the spherical spectrum of $\A$. As a consequence, the semigroup $\T(t)$ is analytic in time. We underline that the noncommutative setting prevents from the possibility of applying the classical strategy. A different technique is needed.

The remaining part of the paper is organized as follows. The next section is devoted to some  prelimi\-nary notions. We recall basic concepts and properties concerning real alternative *-algebras and slice regular functions defined over it. Moreover, we give the precise definition of Banach bimodule over such a *-algebra and we study right linear operators acting on such a Banach bimodule, including the main spectral properties of unbounded right linear operators. In Section \ref{S:classical generation theorems}, we recall the statements of the classical generation theorems for semigroups, we use them
in Section \ref{S:str cont semigroups in A-modules} to prove their counterparts in our real alternative *-algebra framework. Finally, in Section \ref{sec:spherical-sect-operators}, we provide the new notion of spherical sectorial operator and we prove that a spherical sectorial operator generates a semigroup that can be represented by a Cauchy integral formula as in \eqref{intro:e^At}. In Appendix-Section \ref{S:appendix}, we specify the meaning of line integral of continuous functions with values in a Banach bimodule. This kind of line integral appears in the mentioned Cauchy integral formula.


\section{Preliminaries: slice regular functions and Banach bimodules over real alternative *-algebras}\label{S:preliminaries}

\subsection{Real alternative *-algebras and slice regular functions over it}

Let us recall some well-known notions and facts regarding real alternative algebras (see \cite{numbers,GuHaSp,schafer}).

A \emph{real alternative algebra with unity} is a non-empty set $\alg$ endowed with an addition $+ : \alg \times \alg \funzione \alg : (\alpha, \beta) \longmapsto \alpha + \beta$, a product 
$\alg \times \alg \funzione \alg : (\alpha, \beta) \longmapsto \alpha \cdot \beta$ and a scalar multiplication
$\Lambda : \erre \times \alg \funzione \alg : (r,\alpha) \longmapsto r \alpha$ such that:
\begin{itemize}
\item[$(\mr{i})$]
$(\alg, +,\Lambda)$ is a \textit{real algebra}, i.e. $\alg$ is a $\erre$-vector space and the product is $\erre$-bilinear: 
\[
(r\alpha + s\beta)\cdot\gamma = r(\alpha \cdot \gamma) + s(\beta \cdot \gamma), \qquad \gamma \cdot (r\alpha + s\beta) = r(\gamma \cdot \alpha) + s(\gamma \cdot \beta)
\]
for every $\alpha, \beta, \gamma \in \alg$ and for every $r, s \in \erre$.
\item[$(\mr{ii})$] $\alg$ is \emph{alternative}, i.e. the mapping $(\alpha, \beta, \gamma) \longmapsto (\alpha \cdot \beta) \cdot \gamma - \alpha\cdot(\beta \cdot \gamma)$ is alternating: its value changes in sign when any two of its arguments are interchanged. This is equivalent to say that $\alpha\cdot(\alpha\cdot \beta)=(\alpha\cdot\alpha)\cdot\beta$ and $(\alpha\cdot\beta)\cdot\beta=\alpha\cdot(\beta\cdot\beta)$ for every $\alpha,\beta \in \alg$.
\item[$(\mr{iii})$] There exists a (unique) element $1$ of $\alg$, called	\emph{unity} of $\alg$, such that $1 \cdot \alpha= \alpha \cdot 1 = \alpha$ for every $\alpha \in \alg$. We understand that $1 \neq 0$, i.e. $\alg \neq \{0\}$.
\end{itemize}
The real algebra $\alg$ is said to be \emph{finite dimensional} if it has finite dimension as a real vector space. If the product is associative (resp. commutative), then $\alg$ is called \emph{associative} (resp. \emph{commutative}). Clearly, every associative real algebra is alternative. Let $\alpha \in \alg$. If $\alpha \neq 0$ and $\alpha \cdot \beta=0$ for some $\beta \in \alg \setminus \{0\}$, then $\alpha$ is called \emph{zero divisor}. The element $\alpha$ is said to be \textit{invertible} if it has a (two-sided) inverse, i.e. if there exists a (unique) element of $\alg$, denoted by $\alpha^{-1}$, such that $\alpha \cdot \alpha^{-1}=\alpha^{-1} \cdot \alpha=1$. The real alternative algebra $\alg$ is called \emph{real division algebra} if every nonzero element of $\alg$ is invertible. If $\alg$ is finite dimensional, then $\alg$ is a real division algebra if and only if it is without zero divisors.

Suppose that $\alg$ is a real alternative algebra. A straightforward consequence of bilinearity of the product is the formula 
$r(\alpha \cdot \beta) = (r\alpha) \cdot \beta = \alpha\cdot(r\beta)$ holding for every $r \in \erre$ and $\alpha, \beta \in \alg$. In this way, identifying $\RR$ with the subalgebra of $\alg$ generated by the unity $1$, we have:
\[
r\alpha = r \cdot \alpha=\alpha \cdot r \qquad \forall r \in \RR, \ \forall \alpha \in \alg.
\]
In what follows, we will omit the dot notation $\, \cdot \,$ for the product and we will write $\alpha \beta$ rather that $\alpha \cdot \beta$.

As usual powers are defined inductively by $\alpha^0 := 1$, $\alpha^n := \alpha \alpha^{n-1}$ for $n \in \enne$, $n \ge 1$.
From the fact that $\alg$ is alternative follows that $\alg$ is \emph{power associative}, i.e.
\[
  \alpha^{n+m} = \alpha^n \alpha^m \qquad \forall \alpha \in \alg, \ \forall n, m \in \enne.
\]

A real linear mapping $\alg \funzione \alg : \alpha \longmapsto \alpha^c$ is called an
\emph{anti-involution} (or a \emph{*-involution}) if
\begin{itemize}
\item $(\alpha^c)^c = \alpha$ for every $\alpha \in \alg$,
\item $(\alpha\beta)^c = \beta^c \alpha^c$ for every $\alpha, \beta \in \alg$,
\item $r^c = r$ for every $r \in \erre$.
\end{itemize}
Endowing $\alg$ with such an anti-involution, we obtain a \emph{real alternative *-algebra with unity}.

Let us recall the notions of quadratic cone and of  slice regular function, taking \cite{GhPe} as reference.

Throughout the remainder of the paper, we assume that
\begin{equation} \label{eq:assumption-1}
\text{\emph{$\alg$ is a finite dimensional real alternative *-algebra with anti-involution $\alpha \longmapsto \alpha^c$}}.
\end{equation} 
Given $\alpha \in \alg$, we define the \emph{trace} $t(\alpha)$ of $\alpha$ and the \emph{(squared) norm} $n(\alpha)$ by setting
\begin{equation}
t(\alpha) := \alpha + \alpha^c \quad \mbox{and} \quad n(\alpha) := \alpha \alpha^c.
\end{equation}
Moreover, we define the \emph{quadratic cone of $\alg$} \[
Q_\alg := \erre \cup \{\alpha \in \alg\ :\ t(\alpha), n(\alpha) \in \erre,\ t(\alpha)^2 - 4n(\alpha) < 0\}
\]
and the set of \emph{square roots of $-1$}
\begin{equation}
  \su_\alg := \{\j \in Q_\alg \, : \, \j^2 = -1\}.
\end{equation}
Finally, we define the \emph{real part $\re(\alpha)$} and the 
\emph{imaginary part $\im(\alpha)$} of an element $\alpha$ of $Q_\alg$ as follows:
\begin{equation}
\re(\alpha):= (\alpha+\alpha^c)/2 \in \RR
\quad \mbox{and} \quad
\im(\alpha):=(\alpha-\alpha^c)/2.
\end{equation}

For every $\j \in \su_\alg$, we denote by $\CC_\j:=\langle 1, \j \rangle \simeq \CC$ the subalgebra of $\alg$ generated by $\j$. Since $\alg$ is assumed to be alternative, if $\su_\alg \neq \emptyset$, then one can prove that the quadratic cone $Q_\alg$ has the following two properties, which describe its ``slice complex'' nature:
\begin{align}
& Q_\alg = \bigcup_{\j \in \mathbb{S}_\alg} \ci_\j, \label{cone = union of planes} \\
& \ci_\j \cap \ci_\k = \erre \qquad \forall \j, \k \in \mathbb{S}_\alg, \ \j \neq \pm\k. \label{eq:intersection}
\end{align}
For every $\alpha \in Q_\alg$, we define the function $\Delta_\alpha:Q_\alg \lra \alg$ by setting
\begin{equation} \label{eq:delta}
\Delta_\alpha(\beta) :=\beta^2 - \beta t(\alpha) + n(\alpha).
\end{equation}
It is immediate to verify that each $\alpha \in Q_\alg$ satisfies the real quadratic equation $\Delta_\alpha(\alpha)=0$. This equality and \eqref{cone = union of planes} justify the name of $Q_\alg$. Two easy consequences of \eqref{cone = union of planes} are the following:
\begin{itemize}
 \item every $\alpha \in Q_\alg \setminus \{0\}$ is invertible in $\alg$ and $\alpha^{-1}=n(\alpha)^{-1}\alpha^c$,
 \item $\alpha^n \in Q_\alg$ for every $\alpha \in Q_\alg$ and $n \in \enne$.
\end{itemize}

In what follows, we assume that
\begin{equation} \label{eq:assumption-2}
\text{\emph{$\su_\alg \neq \emptyset$ and $\alg$ is endowed with a norm $|\cdot|$ such that $|\alpha|=\sqrt{n(\alpha)}$ for every $\alpha \in Q_\alg$}}.
\end{equation}

We assume also that $\alg$ is equipped with the topology induced by $|\cdot|$.

The reader observes that, if $\alpha \in Q_\alg$ belongs to $\CC_\j$ for some $\j \in \su_\alg$, then there exist $a,b \in \RR$ such that $\alpha=a+b\j$ and hence $\alpha^c=a-b\j$ and $|\alpha|=\sqrt{a^2+b^2}$. In particular, we have: $|\alpha\beta|=|\alpha||\beta|$ and $|\alpha^n|=|\alpha|^n$ for every $\alpha,\beta \in \CC_\j$ and $n \in \enne$. 

We now recall some remarkable examples of real algebras satisfying our assumptions \eqref{eq:assumption-1} and \eqref{eq:assumption-2}. 

\begin{Exa} \label{exa:algebras}
$(\mr{i})$ The simplest example is the one of complex numbers $\CC$, equipped with the usual conjugation map as anti-involution and with the usual euclidean norm. It is easy to see that $Q_\CC=\CC$.

$(\mr{ii})$ Consider the real vector space $\RR^4$, identify $\RR$ with the vector subspace $\RR \times \{0\}$ of $\RR^4=\RR \times \RR^3$ and denote by $\{1,i,j,k\}$ the canonical basis of $\RR^4$. Define a product on $\RR^4$ by imposing the relations
\[
i^2=j^2=k^2=-1, \quad ij=-ji=k, \quad jk=-kj=i, \quad ki=-ik=j
\]
and by using the distributivity to extend such an operation to all $q=a+bi+cj+dk \in \RR^4$, $a,b,c,d \in \RR$. Endowing $\RR^4$ with this product, we obtain the real algebra $\q$ of \emph{quaternions}, which is associative, but not commutative. The standard conjugation map $q \longmapsto \overline{q}:=a-bi-cj-dk$ is an anti-involution and the real-valued function $q \longmapsto |q|:=\sqrt{q\overline{q}}$ is the usual euclidean norm. Such a norm is multiplicative: $|pq|=|p||q|$ for every $p,q \in \q$. Since every $q \in \q \setminus \{0\}$ has the inverse $\overline{q} \, |q|^{-2}$, $\q$ is also a real division algebra. We consider $\q$ to be endowed with the conjugation map $q \longmapsto \overline{q}$ and with the euclidean norm. Finally, we remark that $Q_\q=\q$.

$(\mr{iii})$ The real algebra $\oc$ of \emph{octonions}, also called Cayley numbers, can be obtained from $\q$ as follows. Write any element $x$ of $\oc$ as $x=p+q\ell$, where $p,q \in \q$ and $\ell$ is an imaginary unit, i.e. $\ell^2=-1$. Define the addition and the product on $\oc$ by setting
\[
x+y:=(p+r)+(q+s)\ell
\quad \mbox{and} \quad
xy:=(pr-\overline{s}q)+(q\overline{r}+sp)\ell,
\]
where $y=r+s\ell$, $r,s \in \q$. The real algebra $\oc$ constructed in this way has dimension $8$ and is alternative. However, it is neither commutative nor associative. Consider the basis $\{\mr{o}_0,\ldots,\mr{o}_7\}$ of $\oc$, where $\mr{o}_0:=1$, $\mr{o}_1:=i$, $\mr{o}_2:=j$, $\mr{o}_3:=k$, $\mr{o}_4:=\ell$, $\mr{o}_5:=i\ell$, $\mr{o}_6:=j\ell$ and $\mr{o}_7:=k\ell$. It is known that the conjugation map, sending $x=\sum_{h=0}^7a_h\mr{o}_h$ into $\overline{x}=a_0-\sum_{h=1}^7a_h\mr{o}_h$, is an  anti-involution and the real-valued function $x \longmapsto |x|:=\sqrt{x\overline{x}}$ coincides with the euclidean norm $(\sum_{h=0}^7a_h^2)^{1/2}$. Also in this case, the norm $|\cdot|$ is multiplicative and every nonzero element $x$ has inverse $\overline{x} \, |x|^{-2}$. It turns out that $\oc$ is a real division algebra. We endowed this algebra with the mentioned conjugation map and with the euclidean norm. It follows that $\su_\oc=\oc$.

$(\mr{iv})$ Let $n$ be a positive integer and let $\mc{P}(n)$ be the family of all subsets of $\{1,\ldots,n\}$. Identify $\RR$ with the vector subspace $\RR \times \{0\}$ of $\RR^{2^n}=\RR \times \RR^{2^n-1}$ and denote by $\{e_K\}_{K \in \mc{P}(n)}$ the canonical basis of $\RR^{2^n}$, where $e_{\emptyset}:=1$. For simplicity, if $K=\{k\}$ for some $k$, then we set $e_k:=e_K$. We define a product on $\RR^{2^n}$ by requiring associativity, distributivity and the following relations:
\[
e_k^2=-1, \quad e_ke_h=-e_he_k, \quad e_K=e_{k_1} \cdots e_{k_s}
\]
for every $k,h \in \{1,\ldots,n\}$ with $k \neq h$, and for every $K \in \mc{P}(n) \setminus \{\emptyset\}$ such that $K=\{k_1,\ldots,k_s\}$ with $k_1<\ldots<k_s$. The real algebra obtained endowing $\RR^{2^n}$ with this product is called \emph{Clifford algebra $\RR_n=\mathit{C}\ell_{0,n}$ of signature $(0,n)$}. Evidently, it is associative, but not commutative if $n \geq 2$. Observe that $\RR_1$ and $\RR_2$ are isomorphic to $\CC$ and $\q$, respectively. If $n \geq 3$, a new phenomenon appears: $\RR_n$ has zero divisors as $1-e_{\{1,2,3\}}$. In fact, we have: $(1-e_{\{1,2,3\}})(1+e_{\{1,2,3\}})=0$. The \emph{Clifford conjugation} is an anti-involution of $\RR_n$ defined by setting
\[\textstyle
\overline{x}:=\sum_{K \in \mc{P}(n)}(-1)^{|K|(|K|+1)/2}a_Ke_k \quad \mbox{if } x=\sum_{K \in \mc{P}(n)}a_Ke_k \in \RR_n, \; a_K \in \RR,
\]
where $|K|$ indicates the cardinality of the set $K$. One can prove that the euclidean norm $|x|$ of $x$, i.e. $(\sum_{K \in \mc{P}(n)}a_K^2)^{1/2}$, coincides with $\sqrt{x \overline{x}}$ for every $x \in Q_{\RR_n}$. Actually, there exists another norm $|\cdot|_{\mathit{C}\ell}$ on $\RR_n$, called \emph{Clifford operator norm}, such that $|x|_{\mathit{C}\ell}=\sqrt{x \overline{x}}$ for every $x \in Q_{\RR_n}$. Such a norm is defined by setting
\[
|x|_{\mathit{C}\ell}:=\sup\{|xa| \in \RR \, : \, |a|=1\}.
\]
Unlike the euclidean norm $|\cdot|$, the Clifford operator norm $|\cdot|_{\mathit{C}\ell}$ has the following submultiplicative property:
\begin{equation} \label{eq:submultiplicative}
|xy|_{\mathit{C}\ell} \leq |x|_{\mathit{C}\ell}|y|_{\mathit{C}\ell} \qquad \forall x,y \in \RR_n.
\end{equation}
We consider $\RR_n$ endowed with the Clifford conjugation and with the Clifford operator norm. If $n \geq 3$, then the quadratic cone $Q_{\RR_n}$ of $\RR_n$ is the proper closed subset of $\RR_n$ described by the following equations:
\[
x_K=0 \quad \mbox{and} \quad x \bullet (xe_K)=0 \quad \text{for every $K \in \mc{P}(n)$ with $e_K \neq 1$ and $e_K^2=1$},
\]
where $\bullet$ denotes the standard scalar product on $\RR_n=\RR^{2^n}$. {\tiny$\blacksquare$}
\end{Exa}

It is worth recalling that $Q_\alg=\alg$ if and only if $\alg$ is isomorphic to one of the division *-algebras $\CC$, $\q$ or $\oc$ described above (see \cite{numbers}, \S8.2.4 and \S9.3.2).

We are now in position to recall the notions of slice and slice regular functions.

Let $D$ be a subset of $\CC$, invariant under the complex conjugation $z=a+bi \longmapsto \overline{z}=a-bi$. Define
\[
\OO_D:=\{a+b\j \in Q_\alg \, : \, a,b \in \RR, \, a+bi \in D, \, \j \in \su_\alg\}.
\]
A subset of $Q_\alg$ is said to be \emph{circular} if it is equal to $\OO_D$ for some set $D$ as above. 

Suppose now that $D$ is open in $\CC$, not necessarily connected. Thanks to \eqref{cone = union of planes} and \eqref{eq:intersection}, $\OO_D$ is a relatively open subset of $Q_\alg$.

Consider the complexification $\alg_\CC:=\alg \otimes _\RR \CC$ of $\alg$. The real algebra $\alg_\CC$ can be described as follows. Its elements can be written in the following form: $w=\alpha+\beta\ui$, where $\alpha,\beta \in \alg$ and $\ui$ is an imaginary unit. The sum and the product of $\alg_\CC$ are given by setting
\[
(\alpha+\beta\ui)+(\alpha'+\beta'\ui)=(\alpha+\alpha')+(\beta+\beta')\ui, \quad (\alpha+\beta\ui)(\alpha'+\beta'\ui)=(\alpha\alpha'-\beta\beta')+(\alpha\beta'+\beta\alpha')\ui.
\]

\begin{Def}
A function $F=F_1+F_2\ui:D \lra \alg_\CC$ is called \emph{stem function} if the pair $(F_1,F_2)$ is even-odd w.r.t. the imaginary part of $z \in D$, i.e. $F_1(\overline{z})=F_1(z)$ and $F_2(\overline{z})=-F_2(z)$ for every $z \in D$.

The stem function $F=F_1+F_2\ui$ on $D$ induces a \emph{(left) slice function} $\II(F):\OO_D \lra \alg$ on $\OO_D$ as follows. Let $\alpha \in \OO_D$. By \eqref{cone = union of planes}, there exist $a,b \in \RR$ and $\j \in \su_\alg$ such that $\alpha=a+b\j$. Then we set:
\[
\II(F)(\alpha):=F_1(z)+\j \, F_2(z), \quad \text{where $z=a+bi \in D$}.
\]
\end{Def}

The reader observes that the definition of $\II(F)$ is well-posed. In fact, if $\alpha \in \OO_D \cap \RR$, then $a=\alpha$, $\beta=0$ and $\j$ can be choosen arbitrarily in $\su_\alg$. However, $F_2(z)=0$ and hence $\II(F)(\alpha)=F_1(\alpha)$, independently from the choice of $\j$. If $\alpha \in \OO_\alg \setminus \RR$, then $\alpha$ has the following two expressions:
\[
\alpha=a+b\j=a+(-b)(-\j),
\]
where $a=\re(\alpha)$, $b=|\im(\alpha)|$ and $\j=\im(\alpha)/|\im(\alpha)|$. Anyway, if $z:=a+bi$, we have:
\[
\II(F)(a+(-b)(-\j))=F_1(\overline{z})+(-\j)F_2(\overline{z})=F_1(z)+(-\j)(-F_2(z))=F_1(z)+\j \, F_2(z)=\II(F)(a+b\j).
\]

It is important to observe that every slice function $f:\OO_D \lra \alg$ is induced by a unique stem function $F=F_1+F_2\ui$. In fact, it is easy to verify that, if $\alpha=a+b\j \in \OO_D$ and $z=a+bi \in D$, then
\[
F_1(z)=(f(\alpha)+f(\alpha^c))/2
\quad \mbox{and} \quad
F_2(z)=-\j \, (f(\alpha)-f(\alpha^c))/2.
\]

Let us introduce a subclass of slice functions, which will play an important role in the next sections.  

\begin{Def} \label{eq:real-slice}
Let $F=F_1+F_2\ui:D \lra \alg_\CC$ be a stem function on $D$. The slice function $f=\II(F)$ induced by $F$ is said to be \emph{real} if both the components $F_1$ and $F_2$ of $F$ are real-valued.
\end{Def}

One can prove that the slice function $f$ is real if and only if $f(\OO_D \cap \CC_\j) \subseteq \CC_\j$ for every $\j \in \su_\alg$.

Let $t \in \RR$ and let $\mr{Exp}_t:\CC \lra \alg_\CC$ be the function defined by $\mr{Exp}_t(a+bi):=e^{ta}\cos(b)+e^{ta}\sin(b)\ui$. Such a function is a stem function, which induces the following real slice function $\mr{exp}_t:Q_\alg \lra \alg$:
\begin{equation} \label{eq:exp}
\mr{exp}_t(\alpha)=e^{ta}\cos(b)+\j \, e^{ta}\sin(b)=e^{t\alpha} \qquad \forall \alpha=a+b\j \in Q_\alg,
\end{equation}
where $e^{t\alpha}:=\sum_{n \in \enne}(t\alpha)^n/n!$.

In general, the pointwise product of slice functions is not a slice function. However, if $F=F_1+F_2\ui$ and $G=G_1+G_2\ui$ are stem functions, then it is immediate to see that their pointwise product
\[
FG=(F_1G_1-F_2G_2)+(F_1G_2+F_2G_1)\ui
\]
is again a stem function. In this way, we give the following definition.

\begin{Def}
Let $f=\II(F)$ and $g=\II(G)$ be two slice functions on $\OO_D$. We define the \emph{slice product $f \cdot g$} as the slice function $\II(FG)$ on $\OO_D$.
\end{Def}

It is easy to see that, if $f$ is a real slice function and $g$ is an arbitrary slice function, then their slice product $f \cdot g$ coincides with the pointwise one. In this case, we simply write $fg$ in place of $f \cdot g$.

Our next aim is to recall the concept of slice regular function, which generalizes the notion of holomorphic function from $\CC$ to any real alternative *-algebra like $\alg$.

Let $F=F_1+F_2\ui:D \lra \alg_\CC$ be a stem function whose components $F_1,F_2:D \lra \alg$ are of class $C^1$. Here we assume that $\alg$ is equipped with the standard structure of $C^1$-manifold, as a finite dimensional real vector space.

\begin{Def} \label{def:slice-regular}
The slice function $f=\II(F):\OO_D \lra \alg$ is called \emph{slice regular} if $F=F_1+F_2\ui$ satisfies the following Cauchy-Riemann equations:
\[
\frac{\partial F_1}{\partial a}=\frac{\partial F_2}{\partial b}
\quad \mbox{ and } \quad
\frac{\partial F_1}{\partial b}=-\frac{\partial F_2}{\partial a} \quad \; (z=a+bi \in D).
\]
\end{Def} 

Significant examples of slice regular functions are the polynomials with right coefficients in $\alg$ and, more generally, the convergent power series of the form $\sum_{n \in \enne}\alpha^na_n$ with coefficients $a_n$ in $\alg$. In particular, the exponential function $\mr{exp}_t$ is slice regular for every $t \in \RR$.

We remark that, if $D$ is connected, the notion of slice regular function on $\OO_D$ given in Definition \ref{def:slice-regular} is equivalent to the one given in the Introduction (see \cite[Proposition 8]{GhPe} and \cite[Theorem 2.4]{GhPe-gl}).

Let $\alpha=a+b\j \in Q_\alg$ and let $\su_\alpha:=\{a+b\, \k \in Q_\alg \, : \, \k \in \su_\alg\}$. The function $\Delta_\alpha:Q_\alg \lra \alg$ defined in \eqref{eq:delta} has $\su_\alpha$ as zero set and is real slice. It follows that the function from $Q_\alg \setminus \su_\alpha$ to $Q_\alg$, sending $\beta$ into $\Delta_\alpha(\beta)^{-1}$, is well-defined and real slice. In this way, we can define the slice function $C_\alpha:Q_\alg \setminus \su_\alpha \lra \alg$ by setting
\[
C_\alpha(\beta):=\Delta_\alpha(\beta)^{-1}(\alpha^c-\beta).
\]
Observe that
\[
C_\alpha(\beta) = (\alpha-\beta)^{-1} \qquad \forall \alpha, \beta \in \mathbb{C}_\j,\ \alpha \neq \beta, \ \alpha \neq \beta^c.
\]

Let $\Upsilon_\alg:=\{(\alpha,\beta) \in Q_\alg \times Q_\alg \, : \, \beta \not\in \su_\alpha\}$. The function $\Upsilon_\alg \lra Q_\alg:(\alpha,\beta) \longmapsto C_\alpha(\beta)$ is called \emph{Cauchy kernel for slice regular functions on $\alg$}. This nomenclature is justified by the next result proved in Corollary 28 of \cite{GhPe} (see also \cite{GhPe-volume-cauchy}).

\begin{Thm}[Slice Cauchy formula]
Let $f=\II(F):\OO_D \lra \alg$ be a slice regular function. Suppose that the boundary $\partial D$ of $D$ is piecewise $C^1$, and the components $F_1$ and $F_2$ of $F=F_1+F_2\ui$ are of class $C^1$ and admit continuous extension on the closure of $D$ in $\CC$. Choose $\j \in \su_\alg$ and define $D_\j:=\OO_D \cap \CC_\j$. Then we have:
\begin{equation} \label{eq:cauchy}
f(\beta)=\frac{1}{2\pi} \int_{\partial D_\j} C_\alpha(\beta) \, \j^{-1} \de \alpha \, f(\alpha)
\end{equation}
for every $\beta \in \OO_D$ if $\alg$ is associative or for every $\beta \in D_\j$ if $\alg$ is not associative, where $\partial D_\j$ is the piecewise $C^1$ boundary of $D_\j$ in $\CC_\j$. If $f$ is real slice, then the formula is valid for every $\beta \in \OO_D$ also when $\alg$ is not associative.
\end{Thm}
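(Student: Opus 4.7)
The strategy is to reduce the formula to the classical vector-valued complex Cauchy integral formula on the slice $D_\j$ and then to extend it outside $D_\j$ using the slice structure.

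First I would dispose of the case $\beta \in D_\j$. The Cauchy--Riemann equations in Definition \ref{def:slice-regular} say precisely that the restriction $f_\j := f|_{D_\j}: D_\j \lra \alg$ is holomorphic in $\alpha \in D_\j$ with respect to the complex structure on $D_\j$ (induced by multiplication by $\j$) and on $\alg$ (viewed as a real Banach space acted on by left multiplication by $\j$). For $\alpha,\beta \in \CC_\j$ the Cauchy kernel collapses to $C_\alpha(\beta) = (\alpha-\beta)^{-1}$, and $\j^{-1} \in \CC_\j$ commutes with $(\alpha-\beta)^{-1}$ and with $\de\alpha$. Thus the standard vector-valued Cauchy integral formula applied to $f_\j$ on $D_\j$ already gives \eqref{eq:cauchy} for every $\beta \in D_\j$.

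To extend this to a generic $\beta = a + b\k \in \OO_D$, I would use the representation formula for slice functions: writing $z = a+bi$ and inverting the relations between $F_1,F_2$ and $f$ on $\CC_\j$, one has $F_1(z) = \tfrac{1}{2}[f(a+b\j)+f(a-b\j)]$ and $F_2(z) = \tfrac{1}{2}\j^{-1}[f(a+b\j)-f(a-b\j)]$, hence
\[
f(\beta) \;=\; \tfrac{1}{2}\bigl[f(a+b\j)+f(a-b\j)\bigr] \;+\; \k\,\bigl\{\tfrac{1}{2}\j^{-1}\bigl[f(a+b\j)-f(a-b\j)\bigr]\bigr\}.
\]
Since $a\pm b\j \in D_\j$, I can apply the already proved first case to each term $f(a\pm b\j)$. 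Observing that the function $\beta \longmapsto C_\alpha(\beta)$ is itself slice regular on $Q_\alg \setminus \su_\alpha$ (its real factor $\Delta_\alpha(\beta)^{-1}$ is a real slice function and $\alpha^c - \beta$ is affine in $\beta$), the same representation formula applied to $C_\alpha$ shows that the resulting combined integrand equals, up to the trailing factor $\j^{-1}\de\alpha\, f(\alpha)$, exactly $C_\alpha(\beta)$. Pushing the factor $\k\j^{-1}$ through the trailing term is legitimate when $\alg$ is associative, and \eqref{eq:cauchy} follows on the whole of $\OO_D$.

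The main obstacle is precisely this last manipulation in the non-associative setting: the identity $\k(\j^{-1}u) = (\k\j^{-1})u$ fails in general, which is why the statement must restrict to $\beta \in D_\j$ when $\alg$ is not associative, \emph{unless} $f$ is real slice. In the real-slice case, $f(\alpha) \in \CC_\j$ for every $\alpha \in \partial D_\j$, so the trailing factor $\j^{-1}\de\alpha\, f(\alpha)$ lies in the commutative subalgebra $\CC_\j$; by Artin's theorem every two elements of an alternative algebra generate an associative subalgebra, which supplies enough associativity in the product $C_\alpha(\beta) \cdot (\j^{-1}\de\alpha\, f(\alpha))$ to rerun the argument of the previous paragraph and recover \eqref{eq:cauchy} on all of $\OO_D$. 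The delicate bookkeeping of the order of multiplications inside the integrand is where I expect the technical work to concentrate.
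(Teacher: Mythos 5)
The paper does not actually prove this theorem: it is quoted as Corollary~28 of \cite{GhPe}, so there is no internal argument to compare against. Your two-stage strategy (reduce to the classical vector-valued Cauchy formula on the slice $D_\j$, then propagate to all of $\OO_D$ by applying the representation formula simultaneously to $f$ and to the kernel $\beta \longmapsto C_\alpha(\beta)$) is a correct and self-contained route to the result, in the same spirit as the slice-extension machinery of the cited reference. Your first step is sound: the stem Cauchy--Riemann equations in Definition~\ref{def:slice-regular} do give $\partial_a f_\j + \j\,\partial_b f_\j = 0$, and $\alg$ is a left $\CC_\j$-module by Artin's theorem even when $\alg$ is not associative; since $\j^{-1}$, $\de\alpha$ and $C_\alpha(\beta)=(\alpha-\beta)^{-1}$ all lie in the commutative associative subalgebra $\CC_\j$, the rebracketings are harmless and \eqref{eq:cauchy} follows for $\beta \in D_\j$. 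The second step is also correct in outline, and you diagnose the obstruction accurately; the ``delicate bookkeeping'' you defer to the end is in fact elementary once one records the right observation. Namely, by the representation formula write $C_\alpha(\beta) = c_1 + \k\, c_2$ with $c_1, c_2 \in \CC_\j$ (this uses only that $\alpha \in \CC_\j$, not that $C_\alpha$ is real slice). When $f$ is real slice the trailing factor $w := \j^{-1}\,\de\alpha\, f(\alpha)$ also lies in $\CC_\j$, so every product that occurs --- in particular $(\k\, c_2)w = \k(c_2 w)$ --- takes place inside the associative subalgebra generated by $\j$ and $\k$, and the computation closes on all of $\OO_D$. When $f$ is not real slice and $\alg$ is not associative, $w$ escapes $\CC_\j$ and the identity $(\k\, c_2)w = \k(c_2 w)$ is no longer available, which is exactly why the statement is then restricted to $\beta \in D_\j$. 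So there is no genuine gap in your argument, only an exposition left open at the final step, and the above makes it precise.
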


The precise meaning of the line integral contained in the above Cauchy integral formula is specified in the Appendix-Section \ref{S:appendix}.

If we apply this formula to the real slice function $\exp_t$ defined in \eqref{eq:exp}, then we obtain
\[
e^{t\beta}=\frac{1}{2\pi} \int_{\partial D_\j} C_\alpha(\beta) \, \j^{-1} e^{t\alpha} \de \alpha \qquad \forall t \in \RR, \forall \beta \in \OO_D, \forall \j \in \su_\alg.
\]
We stress that the associativity of $\alg$ is not necessary for the validity of the latter equality.


\subsection{Banach $\alg$-bimodules}

The following definitions summarize the notions of vector space on the quaternions or on Clifford algebras, and that of vector space on the octonions, dealt with in \cite{CoSaSt} and \cite{GoHo}, respectively.

The reader reminds that \emph{we are working with a real algebra $\alg$ satisfying \eqref{eq:assumption-1} and \eqref{eq:assumption-2}}.

\begin{Def}\label{D:left A-modulo}
Let $(V,+)$ be an abelian group. We say that $V$ is a \emph{(left) $\alg$-module} if it is endowed with a left scalar multiplication 
$\alg \times V \funzione V : (\alpha,v) \longmapsto \alpha v$ such that
\begin{alignat}{5}
  & \alpha(u+v) = \alpha u + \alpha v, 	& \qquad 	& \forall u, v \in V,	& \quad	& \forall \alpha \in \alg, \\
  & (\alpha+\beta)u = \alpha u + \beta u, 	& \qquad 	& \forall u \in V, 		& \quad 	& \forall \alpha, \beta \in \alg, \\   
  & 1u = u, 						&\qquad 	& \forall u \in V, \\  
  & r(su) = (rs)u 					& \qquad 	& \forall u \in V, 		& \quad 	& \forall r, s \in \erre. 
     \label{ass del prod per reali-sx}
\end{alignat}
If $\alg$ is associative it is also required that
\begin{alignat}{5}
  & \alpha(\beta u) = (\alpha\beta)u,	& \qquad & \forall u \in V,	& \quad	& \forall \alpha, \beta \in \alg. 
      \label{ass del prod per scalari-sx}	
\end{alignat}
An abelian subgroup $W$ of $V$ is a \emph{(left) $\alg$-submodule} if $\alpha u \in W$ whenever $u \in W$ and $\alpha \in \alg$.
\end{Def}

If $\alg$ is a field we obtain the classical notion of vector space. The definition of right $\alg$-module is completely analogous:

\begin{Def}\label{D:right A-modulo}
Let $(V,+)$ be an abelian group. We say that $V$ is a \emph{right $\alg$-module} if it is endowed with a right scalar multiplication
$V \times \alg \funzione V : (\alpha,v) \longmapsto \alpha v$ such that
\begin{alignat}{5}
  & (u+v)\alpha = u\alpha + v\alpha, 		& \qquad 	& \forall u, v \in V,	& \quad	& \forall \alpha \in \alg, \\
  & u(\alpha+\beta) = u\alpha + u\beta , 	& \qquad 	& \forall u \in V, 		& \quad 	& \forall \alpha, \beta \in \alg, \\   
  & u 1= u, 						&\qquad 	& \forall u \in V, \\  
  & (ur)s = u(rs) 					& \qquad 	& \forall u \in V, 		& \quad 	& \forall r, s \in \erre. 
     \label{ass del prod per reali-dx}
\end{alignat}
If $\alg$ is associative it is also required that
\begin{alignat}{5}
  & (u\alpha)\beta  = u(\alpha\beta),	& \qquad	& \forall u \in V,	& \quad	& \forall \alpha, \beta \in \alg. 
      \label{ass del prod per scalari-dx}	
\end{alignat}
An abelian subgroup $W$ of $V$ is a \emph{right $\alg$-submodule} if $u\alpha \in W$ whenever $u \in W$ and $\alpha \in \alg$.
\end{Def}

Now we can give the notion of $\alg$-bimodule.

\begin{Def}\label{D:2sided A-modulo}
Let $(V,+)$ be an abelian group. We say that $V$ is a \emph{$\alg$-bimodule} if it is endowed with two scalar multiplications 
$\alg \times V \funzione V : (\alpha,v) \longmapsto \alpha v$ and $V \times \alg \funzione V : (v,\alpha) \longmapsto v \alpha$ such that $V$ is both a left $\alg$-module and a right $\alg$-module and 
\begin{equation}\label{prod per reali}
  r u = u r \qquad \forall u \in V, \quad \forall r \in \erre. 
\end{equation}
If $\alg$ is associative, it is also required that
\begin{equation}\label{compatibilita dei prod per scalari}
 \alpha(u\beta) = (\alpha u)\beta \qquad \forall u \in V, \quad \forall \alpha, \beta \in \alg. 	
\end{equation}
An abelian subgroup $W$ of $V$ is a \emph{sub-bimodule} if it is both a left and a right $\alg$-submodule of $V$.
\end{Def}

In an associative framework, the previous definitions are standard and can be found, e.g., in 
\cite[Chapter 1, Section 2, p. 26-28]{AnFu}, where $\alg$ is an associative ring and does not contain necessarily the real numbers. In our case, $\alg$ contains a copy of $\erre$, but it is not necessarily associative, therefore it is natural to require conditions \eqref{ass del prod per reali-sx}, \eqref{ass del prod per reali-dx} and \eqref{prod per reali}. These conditions easily yield the following lemma.

\begin{Lem}
If $V$ is an $\alg$-bimodule then
\begin{equation}
  r(us) = (r u)s \qquad \forall  r, s \in \erre. \label{compatibilita dei prod per scalari reali}
\end{equation}
Moreover, with the notations of Definition \ref{D:2sided A-modulo}, the abelian group $(V,+)$, endowed with the (left) scalar multiplication 
$\erre \times V \funzione V:(r,v) \longmapsto r v$, is a (left) $\erre$-vector space.
\end{Lem}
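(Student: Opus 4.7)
The plan is to derive both conclusions directly from the bimodule axioms, using only \eqref{ass del prod per reali-sx}, \eqref{ass del prod per reali-dx}, \eqref{prod per reali} and the commutativity of $\RR$.

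For the identity $r(us) = (ru)s$, I would start by applying \eqref{prod per reali} to the inner product $us$ (with $s\in\RR$) to rewrite $us = su$, then use \eqref{ass del prod per reali-sx} to collapse the resulting iterated left real multiplication to $(rs)u$. That is,
\[
r(us) \;=\; r(su) \;=\; (rs)u.
\]
For the right-hand side, apply \eqref{prod per reali} to the outer product $(\bullet)s$ to move $s$ to the left, then use \eqref{ass del prod per reali-sx} and commutativity of $\RR$:
\[
(ru)s \;=\; s(ru) \;=\; (sr)u \;=\; (rs)u.
\]
The two expressions coincide, giving \eqref{compatibilita dei prod per scalari reali}. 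The only subtle point is that this derivation does not need the associativity clause \eqref{ass del prod per scalari-sx}, which is unavailable in the non-associative setting; the argument uses only real scalars, for which the needed associativity is already built into \eqref{ass del prod per reali-sx}.

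For the second assertion, I would simply verify the four axioms of an $\RR$-vector space by restricting the left $\alg$-module structure to scalars in $\RR\subset\alg$. Distributivity on the two sides follows from the distributivity axioms of the left $\alg$-module; the associativity $r(su)=(rs)u$ is exactly \eqref{ass del prod per reali-sx}; and the unit axiom $1u=u$ is part of the left module definition. Hence $(V,+,\Lambda)$ with $\Lambda(r,u):=ru$ is a real vector space.

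I do not anticipate any real obstacle: the lemma is a routine bookkeeping consequence of the axioms, and the only place where one must be careful is to avoid invoking \eqref{ass del prod per scalari-sx} or \eqref{ass del prod per scalari-dx}, relying instead on the real-scalar versions \eqref{ass del prod per reali-sx}, \eqref{ass del prod per reali-dx} together with \eqref{prod per reali}.
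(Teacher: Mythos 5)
Your proof is correct, and it follows exactly the argument the paper has in mind: the paper omits the proof, merely remarking that "These conditions easily yield the following lemma," and your chain $r(us)=r(su)=(rs)u$ and $(ru)s=s(ru)=(sr)u=(rs)u$ using \eqref{prod per reali}, \eqref{ass del prod per reali-sx} and commutativity of $\RR$ is precisely the routine computation intended. You are also right that \eqref{ass del prod per scalari-sx}--\eqref{ass del prod per scalari-dx} must not be invoked (they are only required in the associative case) and that the vector-space claim is a direct restriction of the left $\alg$-module axioms to $\RR\subset\alg$, with the scalar associativity supplied by \eqref{ass del prod per reali-sx}.
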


In \cite{AnFu} it is suggested a self-explanatory notation which is useful when we consider different sets of scalars simultaneously: if $V$ is an abelian group then
\begin{align}
  & \text{$_\alg V$ means that $V$ is considered as a left $\alg$-module}, \\
  & \text{$V_\alg$ means that $V$ is considered as a right $\alg$-module}, \\
  & \text{$_\alg V_\alg$ means that $V$ is considered as a $\alg$-bimodule}.
\end{align}
However we will not use the last notation and we will simply say that $V$ is an $\alg$-bimodule.

The following is a natural generalization of the notion of norm in a vector space.

\begin{Def}\label{D:norma su V}
Let $V$ be an $\alg$-bimodule. A function 
$\norma{\cdot}{} : V  \funzione \clsxint{0,\infty}$ is called a \emph{norm on $V$} if 
\begin{align}
  & \norma{u}{} = 0 \ \Longleftrightarrow \ u = 0, \\
  & \norma{u + v}{} \le \norma{u}{} + \norma{v}{} \qquad \forall u, v \in V, \\
  & \norma{\alpha u}{} \leq |\alpha| \, \norma{u}{} \, \mbox{ and } \, \norma{u \alpha}{} \leq |\alpha| \,  \norma{u}{} \label{pos omog su V}
      \qquad \forall u \in V, \quad \forall \alpha \in \alg,\\
  & \norma{\alpha u}{}=\norma{u \alpha}{}=|\alpha| \, \norma{u}{} \qquad \forall u \in V, \quad \forall \alpha \in Q_\alg. \label{eq:homog}
\end{align}
Since $\RR \subset Q_\alg$, \eqref{eq:homog} implies that $\norma{\cdot}{}$ is a norm on $_\RR V$ in the usual sense. We call the $\alg$-bimodule $V$, equipped this kind of norm, \emph{normed $\alg$-bimodule} and we endow $V$ with the topology induced by the metric $d : V \times V \funzione \clsxint{0,\infty} : (u,v) \longmapsto \norma{u-v}{}$, i.e. with the metric induced by 
$\norma{\cdot}{}$ as a norm on $_{\erre}V$. Finally, we say that $V$ is a \emph{Banach $\alg$-bimodule} if the norm 
$\norma{\cdot}{}$ is complete, i.e. if a sequence $(u_n)$ in $V$ converges whenever $\lim_{n,m \to \infty} \norma{u_n- u_m}{} = 0$.
Equivalently, $V$ is a Banach $\alg$-bimodule if and only if $_\erre V$ is a Banach space.
\end{Def}

We underline that, if $\alg$ is associative, then \eqref{eq:homog} follows from \eqref{pos omog su V}. In fact, if $\alpha=0$, then \eqref{eq:homog} is evident. Otherwise, if $\alpha \in Q_\alg \setminus \{0\}$ and $u \in V$, then  \eqref{eq:assumption-2} implies that
\[
\norma{\alpha u}{} \leq |\alpha| \, \norma{u}{}=|\alpha| \, \norma{\alpha^{-1}\alpha u}{} \leq |\alpha| \, |\alpha^{-1}| \, \norma{\alpha u}{}=|\alpha\alpha^{-1}| \, \norma{\alpha u}{}=|1| \, \norma{\alpha u}{}=\norma{\alpha u}{}
\]
and hence $\norma{\alpha u}{}=|\alpha| \, \norma{u}{}$. Similarly, we infer that $\norma{u\alpha}{}=|\alpha| \,\norma{u}{}$.

We conclude this subsection presenting simple examples of Banach bimodule.

\begin{Example}
Let $n$ be an integer $\geq 2$ and let $\RR_n$ be the Clifford algebra of signature $(0,n)$, endowed with the Clifford conjugation and with the Clifford norm $|\cdot|_{C\ell}$ (see Examples \ref{exa:algebras},$(\mr{iv})$). 
Consider a positive integer $m$ and endow the set $V=(\RR_n)^m$ with the natural componentwise addition and left and right $\RR_n$-scalar multiplications
\begin{align}
&(x_1,\ldots,x_m)+(y_1,\ldots,y_m)=(x_1+y_1,\ldots,x_m+y_m), \label{eq:add}\\
&\alpha(x_1,\ldots,x_m)=(\alpha x_1,\ldots,\alpha x_m), \; (x_1,\ldots,x_m)\alpha=(x_1\alpha,\ldots,x_m\alpha), \label{eq:scalar-mult}
\end{align}
and with the norm
\[
\norma{(x_1,\ldots,x_m)}{}=\left(|x_1|_{C\ell}^2+\ldots+|x_m|_{C\ell}^2\right)^{1/2}.
\]
It is easy to verify that $V$ is a Banach $\RR_n$-bimodule. The reader observes that, if $n \geq 3$, there always exist $\alpha \in \RR_n$ and $u \in V$ such that the inequalities in \eqref{pos omog su V} are not equalities. In fact, if $\alpha=1-e_{\{1,2,3\}}$ and $u=(1+e_{\{1,2,3\}},0,\ldots,0)$, then $\alpha u=0=u\alpha$ and hence $\norma{\alpha u}{}=0=\norma{u\alpha}{}$, while $|\alpha| \, \norma{u}{}\neq 0$.

Similarly, one can define a structure of Banach $\oc$-bimodule on $\oc^m$ by means of formulas \eqref{eq:add}-\eqref{eq:scalar-mult} and of the euclidean norm $\norma{(x_1,\ldots,x_m)}{}=\left(|x_1|^2+\ldots+|x_m|^2\right)^{1/2}$.  
\end{Example}


\subsection{Right linear operators}

Now we introduce the concept of right linear operators acting on $\alg$-bimodules.

\begin{Def}
Let $V$ be an $\alg$-bimodule and let $D(\A)$ be a right $\alg$-submodule of $V$. We say that $\A : D(\A) \funzione V$ is \emph{right linear} if it is additive and
\begin{equation}
 \A(u\alpha) = \A(u) \alpha \qquad \forall u \in V, \quad \forall \alpha \in \alg.
\end{equation}
As usual, the notation $\A v$ is often used in place of $\A(v)$. We use the symbol $\End^{\r}(V)$ to denote the set of right linear operators $\A$ with $D(\A) = V$. The identity operator is right linear and is denoted by $\Id_V$ or simply by $\Id$ 
if no confusion may arise. Moreover, if $V$ is a normed $\alg$-bimodule, then we say that 
$\A : D(\A) \funzione V$ is \emph{closed} if its graph is closed in $V \times V$. As in the classical theory, we set 
$D(\A^2) := \{x \in D(\A) \, : \, \A x \in D(\A)\}$. 
\end{Def}

Observe that, if $D(\A)$ is a right $\alg$-submodule of $V$ and if $\A : D(\A) \funzione V$ is right linear, then $D(\A)$ is a 
(left) $\erre$-vector subspace of $_\erre V$ and $\A$ is (left) $\erre$-linear on $_\erre V$. In particular, 
$\End^\r(V)$ is contained in $\End(_\erre V)$, the set of (left) $\erre$-linear operators from $_\erre V$ into itself.

Let us also recall the following definition (see e.g. \cite[Chapter 1, p. 55-57]{AnFu}).

\begin{Def}
Let $V$ be an $\alg$-bimodule, let $D(\A)$ be a right $\alg$-submodule of $V$ and let $\alpha \in \alg$. If $\A : D(\A) \funzione V$ is a right linear operator, then we define the mapping 
$\alpha \A : D(\A) \funzione V$ by setting
\begin{equation}
   (\alpha \A)(u) := \alpha \A(u), \qquad \forall u \in V.  \label{funzione per scalare a sx}
\end{equation}  
If $D(\A)$ is also a left $\alg$-submodule of $V$, then we can define $\A\alpha :D(\A) \funzione V$ by setting
\begin{equation}
(\A\alpha)(u):=\A(\alpha u), \qquad \forall u \in V. \label{funzione per scalare a dx}
\end{equation} 
The sum of operators is defined in the usual way.
\end{Def}

We have the following easy result.

\begin{Prop}
Let $V$ be an $\alg$-bimodule. The following assertions hold.
\begin{itemize}
\item[$(\mr{i})$] 
If $\A \in \End^\r(V)$ and $\lambda \in \erre$ then $\lambda\A = \A\lambda \in \End^\r(V)$. Moreover, formula 
\eqref{funzione per scalare a sx} with $\alpha \in \erre$ makes $_\erre \End^\r(V)$ an $\erre$-vector subspace of $\End(_{\erre}V)$. 
\item[$(\mr{ii})$] Assume that $\alg$ is associative. If $\A \in \End^\r(V)$ and $\alpha \in \alg$ then $\alpha\A, \A\alpha \in \End^\r(V)$. Moreover, formulas \eqref{funzione per scalare a sx}-\eqref{funzione per scalare a dx} make $\End^\r(V)$ an $\alg$-bimodule. 
\end{itemize}
\end{Prop}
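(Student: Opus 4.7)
The plan is to reduce every claim to the bimodule axioms of Definitions \ref{D:right A-modulo} and \ref{D:2sided A-modulo} together with the definitions \eqref{funzione per scalare a sx}--\eqref{funzione per scalare a dx}; the proof will consist of a short sequence of pointwise computations. For part (i), given $\A \in \End^\r(V)$ and $\lambda \in \RR$, I first check that $\lambda\A$ is additive (immediate from the additivity of $\A$) and right $\alg$-linear via the key computation
\[
(\lambda\A)(u\beta) = \lambda(\A(u)\beta) = (\lambda\A(u))\beta = ((\lambda\A)(u))\beta,
\]
which only uses that real scalars move freely past all actions (ultimately from $ru = ur$ and the $\erre$-vector space structure given by the lemma preceding Definition \ref{D:norma su V}). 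Second, I observe that $D(\A) = V$ is a full bimodule, so $\A\lambda$ is defined, and I verify
\[
(\A\lambda)(u) = \A(\lambda u) = \A(u\lambda) = \A(u)\lambda = \lambda\A(u) = (\lambda\A)(u)
\]
for each $u \in V$, whence $\A\lambda = \lambda\A$. The subspace claim then follows from additivity of operators plus the closure under $\lambda \cdot (-)$ just established.

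For part (ii), the hypothesis that $\alg$ is associative activates the axioms \eqref{ass del prod per scalari-sx}, \eqref{ass del prod per scalari-dx} and \eqref{compatibilita dei prod per scalari}. I will prove first that $\alpha\A, \A\alpha \in \End^\r(V)$ for every $\alpha \in \alg$. For $\alpha\A$, additivity is immediate and right linearity reads
\[
(\alpha\A)(u\beta) = \alpha(\A(u)\beta) = (\alpha\A(u))\beta = ((\alpha\A)(u))\beta,
\]
via \eqref{compatibilita dei prod per scalari}; for $\A\alpha$, the same axiom combined with the right linearity of $\A$ yields
\[
(\A\alpha)(u\beta) = \A(\alpha(u\beta)) = \A((\alpha u)\beta) = \A(\alpha u)\beta = ((\A\alpha)(u))\beta.
\]

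It then remains to verify the $\alg$-bimodule axioms of Definition \ref{D:2sided A-modulo} on $\End^\r(V)$. Distributivity in each variable and the unity axiom follow pointwise from the corresponding axioms in $V$; left associativity $\alpha(\beta\A) = (\alpha\beta)\A$ comes from \eqref{ass del prod per scalari-sx} applied to $\A(u)$; right associativity $(\A\alpha)\beta = \A(\alpha\beta)$ comes from the same axiom applied to $u$ together with the right linearity of $\A$; the identity $r\A = \A r$ for $r \in \RR$ is exactly part (i); and the bimodule compatibility $\alpha(\A\beta) = (\alpha\A)\beta$ collapses, at each $u$, to the tautology $\alpha\A(\beta u) = \alpha\A(\beta u)$. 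I do not anticipate any real obstacle: the proposition is a catalog of pointwise manipulations. The only conceptual subtlety worth flagging is that the associativity of $\alg$ is essential both to define $\alpha\A$ and $\A\alpha$ as right linear operators when $\alpha\notin\RR$ (through \eqref{compatibilita dei prod per scalari}) and to inherit the left/right module axioms from $V$ pointwise; this is exactly the reason why part (i) is restricted to real scalars while part (ii) assumes associativity.
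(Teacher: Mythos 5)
The paper states this Proposition without any proof, treating the verifications as routine consequences of Definitions~\ref{D:right A-modulo}--\ref{D:2sided A-modulo} and formulas \eqref{funzione per scalare a sx}--\eqref{funzione per scalare a dx}; your write-up supplies exactly those verifications, so there is no competing ``paper approach'' to compare against. Your part~(ii) is complete and correct: right linearity of $\alpha\A$ and $\A\alpha$ via \eqref{compatibilita dei prod per scalari}, and the bimodule axioms for $\End^\r(V)$ checked pointwise from the corresponding axioms in $V$, with the compatibility law collapsing to a tautology.

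One genuine imprecision in part~(i): the pivotal equality $\lambda\bigl(\A(u)\beta\bigr) = \bigl(\lambda\A(u)\bigr)\beta$ is the identity $\lambda(w\beta) = (\lambda w)\beta$ for $\lambda\in\erre$, $w\in V$, $\beta\in\alg$, and you attribute it to the lemma preceding Definition~\ref{D:norma su V}. That lemma only gives $r(us)=(ru)s$ for $r,s$ \emph{both real}, so it does not cover the case $\beta\in\alg\setminus\erre$ that you actually need. The fact you are using is \eqref{compatibilita dei prod per scalari} specialised to $\alpha\in\erre$: in the associative case it is an axiom of Definition~\ref{D:2sided A-modulo}, while in the non-associative case the definition is silent about it (it holds in all the concrete bimodules the paper uses, e.g.\ $\oc^m$, because $\erre$ lies in the nucleus, but it is not a listed axiom). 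So either cite \eqref{compatibilita dei prod per scalari} (with $\alpha$ real) directly, or note explicitly that you are invoking the real-scalar case of the left-right compatibility that Definition~\ref{D:2sided A-modulo} implicitly assumes; as written, the citation points to the wrong fact. The rest of part~(i), in particular $\lambda\A=\A\lambda$ and the subspace assertion, is fine.
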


Notice that, in the general not associative 
setting, the mappings defined in \eqref{funzione per scalare a sx}-\eqref{funzione per scalare a dx} are not right linear. A simple example is the one in which $\A$ is the identity operator on $V=\oc$ and $\alpha=i \in \oc=\alg$. This fact might suggest that we should consider only the case when $\alg$ is associative. However, as we will see in Section \ref{S:str cont semigroups in A-modules}, only the $\erre$-linearity is relevant in proving generation theorems for semigroups in the alternative framework. Associativity does not play a role and in fact one works in the spaces $_\erre \End^\r(V)$ and 
$\End(_\erre V)$.

\begin{Lem}\label{L:inverse is linear}
Let $V$ be an $\alg$-bimodule. The following assertions hold.
\begin{itemize}
 \item[$(\mr{i})$] The composition of two right linear operators is right linear on its domain of definition.
 \item[$(\mr{ii})$] Let $\A : D(\A) \funzione V$ be right linear and let $R(\A)$ be its range. Assume that $\A$ has left inverse $\B : R(\A) \funzione V$, i.e. $\B\A = \Id$. Then $R(\A)$ is a right $\alg$-submodule of $V$ and $\B$ is right linear.
\end{itemize}
\end{Lem}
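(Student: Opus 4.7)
The lemma has two independent pieces, both essentially bookkeeping arguments on right scalar multiplication. My plan is to dispose of them one by one, being careful about what the correct domain is in each case.

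For part $(\mr{i})$, let $\A:D(\A) \to V$ and $\C:D(\C) \to V$ be right linear with $D(\A), D(\C)$ right $\alg$-submodules. The natural domain of the composition $\A\C$ is $D := \{u \in D(\C) \, : \, \C(u) \in D(\A)\}$, so first I have to check $D$ is a right submodule. If $u_1,u_2 \in D$ then $u_1+u_2 \in D(\C)$ by closure of $D(\C)$ under addition, and additivity of $\C$ gives $\C(u_1+u_2)=\C(u_1)+\C(u_2)\in D(\A)$ since the latter is an abelian subgroup. Similarly, for $u \in D$ and $\alpha \in \alg$, right-submodule-ness of $D(\C)$ gives $u\alpha \in D(\C)$, and right linearity of $\C$ gives $\C(u\alpha)=\C(u)\alpha \in D(\A)$ since $D(\A)$ is a right submodule; so $u\alpha \in D$. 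The right linearity of $\A\C$ is then immediate: additivity follows from successive applications of additivity of $\C$ and $\A$, and for $u \in D$, $\alpha \in \alg$ I compute
\[
(\A\C)(u\alpha)=\A(\C(u\alpha))=\A(\C(u)\alpha)=\A(\C(u))\alpha=(\A\C)(u)\alpha.
\]

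For part $(\mr{ii})$, the relation $\B\A=\Id$ forces $\A$ to be injective from $D(\A)$ onto $R(\A)$, with $\B$ its set-theoretic inverse: indeed, for any $v \in R(\A)$ one has $v=\A(u)$ for a unique $u \in D(\A)$, and $\B(v)=\B(\A(u))=u$. I would stress this first because every subsequent identity relies on it. To see $R(\A)$ is a right $\alg$-submodule, take $v_1,v_2 \in R(\A)$ with $v_i=\A(u_i)$; then $v_1+v_2=\A(u_1)+\A(u_2)=\A(u_1+u_2) \in R(\A)$ because $u_1+u_2 \in D(\A)$. For $\alpha \in \alg$, right linearity of $\A$ and the right-submodule property of $D(\A)$ give $v\alpha=\A(u)\alpha=\A(u\alpha) \in R(\A)$. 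Finally, right linearity of $\B$: additivity follows from $\B(v_1+v_2)=\B(\A(u_1+u_2))=u_1+u_2=\B(v_1)+\B(v_2)$, and for $\alpha \in \alg$,
\[
\B(v\alpha)=\B(\A(u)\alpha)=\B(\A(u\alpha))=u\alpha=\B(v)\alpha.
\]

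Neither step presents a genuine obstacle; the only point worth flagging is that nowhere do we need associativity of $\alg$, because all the manipulations involve only right multiplication of operator values by a single element of $\alg$, and the operator's right linearity is invoked purely through the identity $\A(u\alpha)=\A(u)\alpha$. This is consistent with the remark preceding the lemma that associativity is inessential for the developments to follow.
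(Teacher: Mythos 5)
Your proof is correct and follows essentially the same approach as the paper's, which treats part $(\mr{i})$ as trivial and for part $(\mr{ii})$ applies $\B$ to the identity $\A(x\alpha)=\A(x)\alpha$ exactly as you do. You simply spell out the routine verifications (that the composition domain and $R(\A)$ are right submodules) in more detail than the paper does.
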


\begin{proof}
The first part $(\mr{i})$ is trivial. It is also easy to check that $R(\A)$ is a right $\alg$-submodule of $V$. If $y \in R(\A)$ and $\alpha \in \alg$, we are left to prove that $\B(y\alpha) = \B(y) \alpha$. Let $x \in D(\A)$ be such that $\A(x) =\ y$. Then $\A(x\alpha) = \A(x)\alpha = y\alpha$, therefore, taking $\B$ on both left-sides, $x\alpha = \B(y\alpha)$. On the other hand, $x=\B(y)$, thus $\B(y)\alpha = \B(y\alpha)$ and the lemma is proved.
\end{proof}

\begin{Def}
Let $V$ be a normed $\alg$-bimodule with norm $\norma{\cdot}{}$. For every $\A \in \End^\r(V)$, we set
\begin{equation}\label{norma operatoriale}
  \norma{\A}{} := \sup_{u \neq 0} \frac{\norma{\A u}{}}{\norma{u}{}}
\end{equation}
and we define the sets
\begin{gather}
   \Lin^\r(V) := \{\A \in \End^\r(V)\ :\ \norma{\A}{} < \infty\}, \notag \\
   \GL^\r(V) := \{\A \in \Lin^\r(V)\ :\ \exists \A^{-1} \in \Lin^r(V)\}. \notag
\end{gather}
\end{Def}

Observe that $\norma{\A}{}$ can be equivalently defined as the operatorial norm of $\A$ as an element of $\End(_\erre V)$, therefore
\begin{align}\label{altra descrizione di Lr(V)}
  \Lin^\r(V) 
    & = \{\A \in \End(_\erre V) \, : \, A \text{ is right linear}, \ \norma{A}{} < \infty\} \notag \\
    & = \{\A \in \Lin(_\erre V) \, : \, A \text{ is right linear}\},
\end{align}
where $\Lin(_\erre V) = \{A \in \End(_\erre V)\ :\ \norma{\A}{} < \infty\}$ is the usual normed $\RR$-vector space of continuous linear operators on $_\RR V$.

\begin{Prop}
Let $V$ be a normed $\alg$-bimodule with norm $\norma{\cdot}{}$. The following assertions hold.
\begin{itemize}
\item[$(\mr{i})$] $_\erre \Lin^\r(V)$ is an $\erre$-vector subspace of $\Lin(_{\erre}V)$.
\item[$(\mr{ii})$]
  Assume that $\alg$ is associative. If $\A \in \Lin^\r(V)$ and $\alpha \in \alg$ then $\alpha\A, \A\alpha \in \Lin^\r(V)$. Hence
  $\Lin^\r(V)$ is an $\alg$-bimodule. 
\end{itemize}
\end{Prop}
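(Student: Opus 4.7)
The key observation is that essentially all the algebraic content of this proposition has already been established in the preceding Proposition, which shows that $\End^{\r}(V)$ is an $\erre$-vector subspace of $\End(_{\erre}V)$ (and, in the associative case, an $\alg$-bimodule under the operations \eqref{funzione per scalare a sx}--\eqref{funzione per scalare a dx}). Hence the plan is simply to check that the boundedness condition $\norma{\cdot}{} < \infty$ is preserved under the relevant operations, so that the corresponding structure descends from $\End^{\r}(V)$ to $\Lin^{\r}(V)$.

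For part $(\mr{i})$ I would invoke the alternative description \eqref{altra descrizione di Lr(V)}, namely $\Lin^{\r}(V) = \{\A \in \Lin(_\erre V) : \A \text{ is right linear}\}$. Since $\Lin(_\erre V)$ is already an $\erre$-vector space, it suffices to observe that the subset of right linear operators is closed under sum and real scalar multiplication; this is exactly part $(\mr{i})$ of the preceding Proposition, combined with the fact that the zero operator is trivially right linear. Alternatively, one can directly combine the triangle inequality and $\erre$-homogeneity of the operator norm (the latter coming from \eqref{eq:homog}, applicable because $\erre \subset Q_\alg$).

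For part $(\mr{ii})$ I would use part $(\mr{ii})$ of the preceding Proposition, which already gives that $\alpha\A,\A\alpha \in \End^{\r}(V)$ and that $\End^{\r}(V)$ is an $\alg$-bimodule in the associative setting. What remains is the norm estimate. Using \eqref{pos omog su V} for a general $\alpha \in \alg$, for every $u \in V$ with $u \neq 0$ one has
\[
\norma{(\alpha\A)(u)}{} = \norma{\alpha\A(u)}{} \leq |\alpha| \, \norma{\A(u)}{} \leq |\alpha| \, \norma{\A}{} \, \norma{u}{},
\]
hence $\norma{\alpha\A}{} \leq |\alpha|\, \norma{\A}{}<\infty$, and symmetrically
\[
\norma{(\A\alpha)(u)}{} = \norma{\A(\alpha u)}{} \leq \norma{\A}{} \, \norma{\alpha u}{} \leq \norma{\A}{} \, |\alpha| \, \norma{u}{},
\]
so $\norma{\A\alpha}{} \leq |\alpha|\, \norma{\A}{}<\infty$. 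Thus $\Lin^{\r}(V)$ is stable under the two scalar actions, and the bimodule axioms transfer immediately from those of $\End^{\r}(V)$.

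\textbf{Expected obstacles.} There is no genuine difficulty: the proof is bookkeeping built on top of the preceding Proposition and on the norm axioms. The only subtle point worth flagging is that for part $(\mr{ii})$ one must use the inequality in \eqref{pos omog su V}, valid for all $\alpha \in \alg$, rather than the equality in \eqref{eq:homog}, which only holds on $Q_\alg$; in particular one should not expect $\norma{\alpha \A}{} = |\alpha|\, \norma{\A}{}$ in general (this already fails on Clifford algebras with zero divisors, as in the Example preceding this subsection).
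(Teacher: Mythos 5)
Your proof is correct, and the paper in fact gives no proof of this proposition (it is stated without one, evidently considered routine given the preceding proposition and the definitions). Your argument is precisely the natural bookkeeping one would supply: part $(\mr{i})$ by intersecting the subspace of right linear endomorphisms (from the preceding proposition) with $\Lin(_\erre V)$ via \eqref{altra descrizione di Lr(V)}, and part $(\mr{ii})$ by reducing the algebraic content to the preceding proposition and adding the estimates $\norma{\alpha\A}{} \le |\alpha|\norma{\A}{}$, $\norma{\A\alpha}{} \le |\alpha|\norma{\A}{}$ via \eqref{pos omog su V}. Your closing remark — that one must use the inequality \eqref{pos omog su V} rather than the equality \eqref{eq:homog} for general $\alpha \in \alg$ — is an accurate and worthwhile caution, consistent with the Clifford-algebra example the paper gives just before this subsection.
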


In general, $\Lin^\r(V)$ is not an $\alg$-bimodule, since 
\eqref{funzione per scalare a sx}-\eqref{funzione per scalare a dx} do not define right linear operators. Therefore $\norma{\cdot}{}$ is not a norm in the sense of Definition \ref{D:norma su V}, but it is a norm on $_\erre \Lin^\r(V)$. Furthermore, $d(A,B) := \norma{A-B}{}$ defines a metric on $\Lin^\r(V)$, which obviously induces the same topology of the relative topology of $\Lin^\r(V)$ as a subset of 
$\Lin(_\erre V)$ endowed with the operatorial norm $\norma{\cdot}{}$.

The following simple lemma is a key result for Section \ref{S:str cont semigroups in A-modules}.

\begin{Lem}\label{L:L^r chiuso in L}
Let $V$ be a normed $\alg$-bimodule with norm $\norma{\cdot}{}$. The $\erre$-vector subspace $\Lin^\r(V)$ of $\Lin(_{\erre}V)$ is closed with respect to the topology of pointwise convergence and hence with respect to the uniform operator topology of $\Lin(_{\erre}V)$.
\end{Lem}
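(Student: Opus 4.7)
The plan is to verify directly the closedness under pointwise convergence, and then deduce the uniform-operator statement from the general fact that a set closed in a weaker topology is automatically closed in any stronger topology.

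First I would fix a net $(\A_i)_{i \in I}$ in $\Lin^\r(V)$ that converges pointwise to some $\A \in \Lin({}_\RR V)$, i.e.\ $\A_i u \to \A u$ in $V$ for every $u \in V$. The task reduces to proving that $\A$ itself is right linear, since additivity and $\RR$-linearity are already inherited from the limit in $\Lin({}_\RR V)$. Fix $u \in V$ and $\alpha \in \alg$; I need to show $\A(u\alpha) = \A(u)\alpha$.

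The key observation is that right multiplication by $\alpha$, viewed as the map $V \to V$, $v \mapsto v\alpha$, is continuous. This follows at once from the norm axiom \eqref{pos omog su V}, which gives $\|v\alpha - w\alpha\| = \|(v-w)\alpha\| \le |\alpha|\,\|v-w\|$. Consequently, since $\A_i u \to \A u$, we obtain $(\A_i u)\alpha \to (\A u)\alpha$ in $V$. On the other hand, pointwise convergence at $u\alpha$ gives $\A_i(u\alpha) \to \A(u\alpha)$. Using the right linearity of each $\A_i$, namely $\A_i(u\alpha) = (\A_i u)\alpha$, and passing to the limit in both expressions, I conclude
\[
\A(u\alpha) = \lim_i \A_i(u\alpha) = \lim_i (\A_i u)\alpha = (\A u)\alpha,
\]
which is the desired right linearity of $\A$. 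Hence $\A \in \Lin^\r(V)$.

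For the final clause, I would note that the uniform operator topology on $\Lin({}_\RR V)$ is finer than the topology of pointwise convergence: if $\A_i \to \A$ in operator norm then clearly $\A_i u \to \A u$ for every $u$. Thus any net in $\Lin^\r(V)$ converging in operator norm to some $\A$ also converges pointwise, and by the argument above $\A \in \Lin^\r(V)$. I do not expect any serious obstacle here; the only subtlety worth flagging is the use of nets rather than sequences (the pointwise topology on $\Lin({}_\RR V)$ is not metrizable in general), but the argument above works verbatim for nets since continuity of right multiplication by $\alpha$ is a statement about nets as well.
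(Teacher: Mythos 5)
Your proposal is correct and follows essentially the same route as the paper: fix $u$ and $\alpha$, use the norm axiom \eqref{pos omog su V} to see that right multiplication by $\alpha$ is continuous, and pass to the limit in $\A_i(u\alpha) = (\A_i u)\alpha$. The only difference is that you are slightly more careful than the paper in working with nets rather than sequences (the paper phrases its argument with a sequence, which strictly speaking only gives sequential closedness for the pointwise topology, though it suffices for the metrizable uniform operator topology), but the estimate and the underlying idea are identical.
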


\begin{proof}
Let $\A \in \Lin(_{\erre}V)$ and let $(\A_n)$ be a sequence in $\Lin^\r(V)$ such that $\A_{n}v \to \A v$ for every $v \in V$. Then \eqref{pos omog su V} yields that $\norma{\A(u \alpha) - \A(u)\alpha}{} \le \norma{\A(u\alpha) - \A_n(u\alpha)}{} + \norma{\A_n u - \A u}{}|\alpha|$ for every $u \in V$, 
$\alpha \in \alg$ and $n \in \enne$. Taking the limit as $n \to \infty$, we get the right linearity of $\A$.
\end{proof}

\begin{Cor}\label{C:A+B invertibile}
Let $V$ be a Banach $\alg$-bimodule with norm $\norma{\cdot}{}$. If $\A \in \GL^\r(V)$ and $\B \in \Lin^\r(V)$ with $\norma{\B}{} < \norma{\A^{-1}}{}^{-1}$, then $\A + \B \in \GL^\r(V)$. In particular, $\GL^\r(V)$ is open in $\Lin^\r(V)$. Moreover,
\begin{equation}\label{altra descrizione di GLr(V)}
  \GL^\r(V) = \{\A \in \Lin^\r(V)\ :\ \emph{\text{$\A$ is bijective}}\} \subseteq \GL(_\erre V),
\end{equation}
where $\GL(_\erre V)$ is the set of invertible operators of $\Lin(_\erre V)$.
\end{Cor}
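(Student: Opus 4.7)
My plan is to mimic the standard Banach-space argument, invoking a Neumann series and the open mapping theorem, but at each step using Lemma \ref{L:inverse is linear} and Lemma \ref{L:L^r chiuso in L} to ensure that all operators produced stay inside $\Lin^\r(V)$.

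For the first assertion, I would factor
\[
\A + \B = \A(\Id + \A^{-1}\B).
\]
Since $\A^{-1}\B \in \Lin^\r(V)$ by part (i) of Lemma \ref{L:inverse is linear} and since
\(
\norma{\A^{-1}\B}{} \le \norma{\A^{-1}}{}\norma{\B}{} < 1,
\)
the classical Neumann series
\(
\sum_{n=0}^{\infty} (-\A^{-1}\B)^n
\)
converges in the operator norm of $\Lin(_\erre V)$ to a bounded two-sided inverse of $\Id + \A^{-1}\B$. Each partial sum is a finite $\erre$-linear combination of compositions of right linear operators, hence lies in $\Lin^\r(V)$; by Lemma \ref{L:L^r chiuso in L}, the norm limit also belongs to $\Lin^\r(V)$. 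Composing with $\A^{-1}$ on the right gives
\(
(\A + \B)^{-1} = (\Id + \A^{-1}\B)^{-1} \A^{-1} \in \Lin^\r(V),
\)
so $\A + \B \in \GL^\r(V)$. Openness of $\GL^\r(V)$ in $\Lin^\r(V)$ is then an immediate consequence: any $\C \in \Lin^\r(V)$ with $\norma{\C - \A}{} < \norma{\A^{-1}}{}^{-1}$ can be written as $\A + (\C - \A)$ and falls under the previous argument.

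For the characterization \eqref{altra descrizione di GLr(V)}, the inclusion $\subseteq$ is trivial. Conversely, let $\A \in \Lin^\r(V)$ be bijective. Viewed as an element of $\Lin(_\erre V)$ by \eqref{altra descrizione di Lr(V)}, $\A$ is a bijective bounded $\erre$-linear operator on the Banach space $_\erre V$, so the Banach open mapping theorem gives $\A^{-1} \in \Lin(_\erre V)$. Applying part (ii) of Lemma \ref{L:inverse is linear} with the left inverse $\A^{-1}$ of $\A$, the operator $\A^{-1}$ is right linear; combined with its boundedness, $\A^{-1} \in \Lin^\r(V)$, i.e. $\A \in \GL^\r(V)$. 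The final inclusion $\GL^\r(V) \subseteq \GL(_\erre V)$ follows from \eqref{altra descrizione di Lr(V)}, since both $\A$ and $\A^{-1}$ then belong to $\Lin(_\erre V)$.

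The only subtle point is the second one: one has to keep in mind that in the absence of associativity, left-scalar-multiplication and right-scalar-multiplication operators are not themselves in $\Lin^\r(V)$, so the right linearity of $\A^{-1}$ must be obtained from Lemma \ref{L:inverse is linear}(ii) rather than from any algebra structure on $\Lin^\r(V)$. Everything else is a bookkeeping exercise ensuring that the manipulations take place inside the closed subspace $\Lin^\r(V)$ of $\Lin(_\erre V)$.
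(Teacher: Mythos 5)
Your proof is correct and follows essentially the same route as the paper's: the factorization $\A+\B=\A(\Id+\A^{-1}\B)$, the Neumann series whose partial sums lie in $\Lin^\r(V)$ and whose limit does too by Lemma \ref{L:L^r chiuso in L}, and then the open mapping theorem combined with Lemma \ref{L:inverse is linear} for the characterization \eqref{altra descrizione di GLr(V)}. The added remarks about where associativity would fail are accurate but not needed, since the argument never invokes any algebra structure on $\Lin^\r(V)$.
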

\begin{proof}
Since $\norma{\A^{-1}\B}{} < 1$, the series $\sum_{n=0}^\infty (-\A^{-1}\B)^n$ converges in $\Lin(_\erre V)$ and its sum is 
$(\Id+\A^{-1}\B)^{-1}$. Every partial sum of this series is an element of $\Lin^\r(V)$, hence Lemma \ref{L:L^r chiuso in L} ensures that $(\Id + \A^{-1}\B)^{-1} \in \Lin^\r(V)$. Thus, as in the classical case, from the equality $\A + \B = \A(\Id + \A^{-1}\B)$, we infer that there exists $(\A + \B)^{-1} = (\Id + \A^{-1}\B)^{-1} \A^{-1} \in \Lin^\r(V)$. Concerning the last statement, if 
$\A \in \Lin^\r(V)$ is bijective then, by Lemma \ref{L:inverse is linear}, $\A^{-1} \in \End^\r(V)$. On the other hand, $\A$ belongs to $\Lin(_\erre V)$ and hence, by the open mapping theorem, we infer that $\A^{-1} \in \Lin(_\erre V)$. Thus we deduce 
\eqref{altra descrizione di GLr(V)} from \eqref{altra descrizione di Lr(V)}. 
\end{proof}

When $\alg$ is associative, it is easily seen that \eqref{funzione per scalare a sx}, \eqref{funzione per scalare a dx} and \eqref{norma operatoriale} make $\Lin^\r(V)$ a normed $\alg$-bimodule in the sense of Definition \ref{D:norma su V}. Of course, we obtain the same topology as the one induced by $\Lin(_\erre V)$. We summarize these facts in the following proposition.

\begin{Prop}\label{P:prop of L^r(V)}
Let $V$ be a normed $\alg$-bimodule with norm $\norma{\cdot}{}$. Assume that $\alg$ is associative. Then formulas \eqref{funzione per scalare a sx}, \eqref{funzione per scalare a dx} and and \eqref{norma operatoriale} make $\Lin^\r(V)$ a normed $\alg$-bimodule. In particular, the operatorial norm $\norma{\cdot}{}$ defined in \eqref{norma operatoriale} is a norm on $\Lin^\r(V)$ in the sense of Definition \ref{D:norma su V}. The topology induced by this norm is the same topology of $\Lin^\r(V)$ as a topological subspace of $\Lin(_{\erre}V)$ endowed with the uniform operator topology.

Furthermore, if $V$ is Banach, the same is true for $\Lin^\r(V)$.  
\end{Prop}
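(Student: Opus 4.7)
My plan is to verify in turn each of the three assertions: the normed bimodule structure, the agreement of topologies, and the Banach property in the complete case. Most of this is bookkeeping once one has the right lemmas already in the paper.

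First, I would invoke the earlier proposition (the one preceding Lemma \ref{L:inverse is linear}) to conclude that, under associativity of $\alg$, formulas \eqref{funzione per scalare a sx} and \eqref{funzione per scalare a dx} turn $\End^\r(V)$ into an $\alg$-bimodule. Then I would verify that the operations preserve boundedness, in the quantitative form
\[
\norma{\alpha\A}{} \le |\alpha|\,\norma{\A}{}, \qquad \norma{\A\alpha}{} \le |\alpha|\,\norma{\A}{},
\]
so that $\Lin^\r(V)$ is an $\alg$-sub-bimodule of $\End^\r(V)$. The first inequality follows from $\norma{(\alpha\A)u}{}=\norma{\alpha(\A u)}{}\le|\alpha|\,\norma{\A u}{}\le|\alpha|\,\norma{\A}{}\norma{u}{}$ by \eqref{pos omog su V}; the second is analogous, using $\norma{(\A\alpha)u}{}=\norma{\A(\alpha u)}{}\le\norma{\A}{}\norma{\alpha u}{}\le\norma{\A}{}|\alpha|\norma{u}{}$.

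Second, I would verify that \eqref{norma operatoriale} satisfies the four axioms of Definition \ref{D:norma su V}. The vanishing condition and the triangle inequality are standard. The two inequalities in \eqref{pos omog su V} have just been established. For the equality \eqref{eq:homog}, take $\alpha \in Q_\alg$ and $\A \in \Lin^\r(V)$. For any $u \in V\setminus\{0\}$, by \eqref{eq:homog} applied in $V$,
\[
\norma{(\alpha\A)u}{}=\norma{\alpha(\A u)}{}=|\alpha|\,\norma{\A u}{},
\]
and taking the supremum over unit vectors gives $\norma{\alpha\A}{}=|\alpha|\,\norma{\A}{}$. For the right action, one uses the substitution $v=\alpha u$; when $\alpha\ne 0$ the map $u\mapsto \alpha u$ is a bijection of $V$ (with inverse $v\mapsto \alpha^{-1}v$) scaling norms by $|\alpha|$, so
\[
\norma{\A\alpha}{}=\sup_{u\ne 0}\frac{\norma{\A(\alpha u)}{}}{\norma{u}{}}=\sup_{v\ne 0}\frac{\norma{\A v}{}}{\norma{\alpha^{-1}v}{}}=|\alpha|\sup_{v\ne 0}\frac{\norma{\A v}{}}{\norma{v}{}}=|\alpha|\,\norma{\A}{},
\]
and $\alpha=0$ is trivial. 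I expect the right-action equality to be the single step requiring a bit of care, since one must use that $\alpha\in Q_\alg\setminus\{0\}$ is invertible (which follows from the facts recorded right after \eqref{eq:delta}) and that $|\alpha^{-1}|=|\alpha|^{-1}$, both of which are valid under assumption \eqref{eq:assumption-2}.

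Third, the topology assertion is essentially tautological: the norm \eqref{norma operatoriale} coincides with the operator norm of $\A$ viewed as an element of $\Lin(_\erre V)$, as already observed in \eqref{altra descrizione di Lr(V)}, so the two topologies on $\Lin^\r(V)$ are identical. Finally, for the Banach statement, I would note that $\Lin(_\erre V)$ is a Banach space whenever $_\erre V$ is, and that Lemma \ref{L:L^r chiuso in L} shows $\Lin^\r(V)$ is closed in $\Lin(_\erre V)$ with respect to the uniform operator topology; a closed subspace of a Banach space is Banach, which concludes the proof.
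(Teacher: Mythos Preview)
Your proof is correct and complete. The paper itself does not supply a proof of this proposition: it is stated as an easy consequence, preceded only by the sentence ``it is easily seen that \eqref{funzione per scalare a sx}, \eqref{funzione per scalare a dx} and \eqref{norma operatoriale} make $\Lin^\r(V)$ a normed $\alg$-bimodule,'' so your write-up simply fills in the details the authors chose to omit. One minor efficiency: your direct verification of \eqref{eq:homog} for the operator norm is unnecessary, since the paper already observes (in the paragraph immediately following Definition \ref{D:norma su V}) that for associative $\alg$ condition \eqref{eq:homog} follows automatically from \eqref{pos omog su V}; having established the two inequalities $\norma{\alpha\A}{}\le|\alpha|\,\norma{\A}{}$ and $\norma{\A\alpha}{}\le|\alpha|\,\norma{\A}{}$, you could simply invoke that remark.
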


For the sake of completeness, we explicitly state a simple fact that will be repeatedly used in the sequel. 

\begin{Lem}\label{L:continuita invariante} 
If $V$ is a normed $\alg$-bimodule with norm $\norma{\cdot}{}$, then $C(\clsxint{0,\infty};V) = C(\clsxint{0,\infty}; _{\erre}\!V)$ and 
$C(\clsxint{0,\infty};\Lin^\r(V)) = \{f \in C(\clsxint{0,\infty}; \Lin(_{\erre}\!V)) \, : \, f(t) \in \Lin^\r(V) \mbox{ for every } t \ge 0\}$.
\end{Lem}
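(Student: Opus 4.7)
My plan is to observe that Lemma \ref{L:continuita invariante} is, at its core, a restatement of the fact that the topologies put on $V$ and on $\Lin^\r(V)$ as (bi)module-valued spaces are \emph{the same} as the topologies of $_\erre V$ and of the subset $\Lin^\r(V)$ of $\Lin(_\erre V)$ with the operator-norm topology. Thus the entire lemma is a purely topological statement and no algebra beyond unwrapping definitions should be needed.

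For the first equality $C(\clsxint{0,\infty};V) = C(\clsxint{0,\infty};{}_\erre V)$, I would invoke Definition \ref{D:norma su V}: by construction, the norm $\norma{\cdot}{}$ on $V$ is the same real-valued function as the norm on $_\erre V$, and $V$ is endowed with the metric topology induced by $d(u,v)=\norma{u-v}{}$. Hence a function $f:\clsxint{0,\infty}\lra V$ satisfies $f(t_n)\to f(t)$ (in $V$) if and only if $\norma{f(t_n)-f(t)}{}\to 0$, which is precisely the criterion for continuity into $_\erre V$. This gives the two spaces of continuous functions elementwise.

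For the second equality, I would recall the remark in the paragraph following Corollary \ref{C:A+B invertibile} that the operator norm $\norma{\cdot}{}$ on $\Lin^\r(V)$ agrees with the restriction of the operator norm of $\Lin(_\erre V)$, so the topology of $\Lin^\r(V)$ equals the subspace topology inherited from $\Lin(_\erre V)$. A standard fact about continuous maps into a topological subspace then yields: for any $f:\clsxint{0,\infty}\lra\Lin(_\erre V)$ with $f(t)\in\Lin^\r(V)$ for all $t\ge 0$, continuity of $f$ as a map into $\Lin(_\erre V)$ is equivalent to continuity of $f$ as a map into $\Lin^\r(V)$. This is exactly the second claimed identity. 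There is no real obstacle: the lemma is a bookkeeping statement that ensures later arguments may freely pass between the two viewpoints without checking continuity twice.
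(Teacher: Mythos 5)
Your proof is correct and is essentially the argument the paper has in mind: the lemma is stated without proof precisely because, as you observe, Definition \ref{D:norma su V} makes the topology on $V$ literally equal to that of $_\erre V$, and the remark after Corollary \ref{C:A+B invertibile} already records that the metric topology on $\Lin^\r(V)$ coincides with the subspace topology inherited from $\Lin(_\erre V)$. Unwrapping these definitions is all there is to do.
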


Now we want to make precise the notion of differentiability for functions of one real variable with values in a normed $\alg$-bimodule. Since the domain is one dimensional and real, differentiability makes sense if we consider the codomain as a normed real vector space. Hence there is nothing really new and we give the following definition.

\begin{Def}
Let $I \subseteq \erre$ be an interval and let $(E,+)$
be an abelian group endowed with a scalar multiplication $\erre \times E \funzione E$ that
makes $_\erre E$ a real vector space. Assume that $\norma{\cdot}{}$ is a norm on $_\erre E$ in the usual sense. A map $f : I \funzione E$ is called \emph{differentiable in $t \in I$} if there exists $\lim_{h \to 0}\frac{1}{h}(f(t+h) - f(t))$ in $_\erre E$, the limit being taken with respect 
to the topology induced by $\norma{\cdot}{}$. Accordingly, if $V$ is a normed $\alg$-bimodule then $f : I \funzione V$ is 
differentiable in $t \in I$ if it is differentiable as a function from $I$ into $_\erre V$, and $g : I \funzione \Lin^r(V)$ is 
differentiable in $t \in I$ if it is differentiable as a function from $I$ into $_\erre \Lin^r(V)$. By Lemma \ref{L:L^r chiuso in L}, we can say that $g : I \funzione \Lin^r(V)$ is differentiable in $t \in I$ if and only if it is differentiable as a function from $I$ into 
$\Lin(_\erre V)$.
\end{Def}


\subsection{Spectral theoretical notions}

In this section, we assume that
\begin{center}
\emph{$X$ is a Banach $\alg$-bimodule with norm $\norma{\cdot}{}$}.
\end{center}
We remind the reader that $\alg$ is a real algebra satisfying \eqref{eq:assumption-1} and \eqref{eq:assumption-2}.

Let us start by recalling the basic spectral notions for operators in real or complex Banach spaces, which makes sense also for Banach $\alg$-bimodules.

\begin{Def}\label{D:classical spectral notions}
Let $D(\A)$ be a right $\alg$-submodule of $X$ and let $\A : D(\A) \funzione X$ be a closed right linear operator. We define:
\begin{itemize}
\item[(i)] $\sigma(\A) := \{\alpha \in Q_\alg \, : \, \alpha \Id - \A \text{ is not bijective}\}$ (the \emph{spectrum} of $\A$).
\item[(ii)] $\rho(\A) := Q_\alg \setminus \sigma(\A)$ (the \emph{resolvent set of $\A$}).
\item[(iii)] $\R_\alpha(\A) := (\alpha \Id - \A)^{-1} : X \funzione D(\A) \quad \mbox{if } \alpha \in \rho(\A)$ (the \emph{resolvent operator of $\A$ at $\alpha$}).
\end{itemize}
\end{Def}

If $\alg$ is associative then the resolvent operator is a bounded right linear operator, by virtue of \eqref{compatibilita dei prod per scalari}, of the classical closed graph theorem, and of Lemma \ref{L:inverse is linear}. Notice that if $\alpha = \lambda$ is real, then $\R_\lambda(\A)$ is a bounded right linear operator also in the not associative 
case. 
By arguing exactly as in the classical case (see, e.g., \cite[Lemma 6, Section VII.3, p. 568]{DuSc}), from the definition of resolvent operator, we can easily deduce the following \emph{resolvent equation}.

\begin{Prop}
Given a closed right linear operator $\A : D(\A) \funzione X$, we have:
\begin{equation}\label{resolvent equation}
  \R_\lambda(\A) - \R_\mu(\A) = (\mu - \lambda)\R_\lambda(\A) \R_\mu(\A) \qquad \forall \lambda, \mu \in \rho(\A) \cap \erre.
\end{equation}
\end{Prop}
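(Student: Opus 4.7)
The plan is to adapt the classical argument for the resolvent identity, which relies only on the algebraic manipulation $(\mu\Id - \A) - (\lambda\Id - \A) = (\mu - \lambda)\Id$ together with the defining relations of the resolvent. The hypothesis $\lambda, \mu \in \rho(\A) \cap \erre$ is exactly what is needed to sidestep the two complications of our setting: first, as noted after Definition~\ref{D:classical spectral notions}, $\R_\lambda(\A)$ and $\R_\mu(\A)$ are bounded right linear operators defined on all of $X$ with range in $D(\A)$, even when $\alg$ is not associative; second, because $\lambda$ and $\mu$ are real, multiplication by $\lambda - \mu$ is just scalar multiplication on $_\erre X$, and right $\alg$-linearity implies $\erre$-linearity, so moving the real scalar past a resolvent operator is unproblematic.

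Concretely, I would fix $x \in X$ and set $y := \R_\mu(\A)x \in D(\A)$. From the very definition of the resolvent we have $(\mu\Id - \A) y = x$, and therefore $(\lambda\Id - \A)y = (\mu\Id - \A)y + (\lambda - \mu)y = x + (\lambda - \mu)y$. Now apply $\R_\lambda(\A)$ to both sides; since $y \in D(\A)$, the left-hand side collapses to $y$, giving
\[
\R_\mu(\A)x = y = \R_\lambda(\A)x + (\lambda - \mu)\R_\lambda(\A)\R_\mu(\A)x,
\]
where the real scalar $(\lambda - \mu)$ was pulled out of $\R_\lambda(\A)$ using its $\erre$-linearity. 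Rearranging yields exactly \eqref{resolvent equation}.

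I do not expect any serious obstacle: the argument is a one-line identity manipulation, and the algebraic subtleties inherent in the non-associative case are entirely neutralized by the hypothesis that $\lambda, \mu$ be real. The only point worth double-checking when writing the proof carefully is that compositions of the form $\R_\lambda(\A)(\lambda\Id - \A)y$ are legitimate, which follows from $y \in D(\A)$ and from the fact that $\R_\lambda(\A)$ is a genuine two-sided inverse of $\lambda\Id - \A$ between $X$ and $D(\A)$.
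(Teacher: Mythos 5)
Your proof is correct and follows essentially the same route the paper intends (it explicitly defers to the classical argument in Dunford--Schwartz); the algebraic rearrangement you carry out is exactly that standard argument, and your observation that reality of $\lambda,\mu$ both keeps the resolvents right linear in the non-associative case and lets the scalar pass through is the right justification.
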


When $\alg$ is the division algebra of quaternions, it has been shown that the generalizations of the classical spectral notions given in Definition \ref{D:classical spectral notions} are not useful tools (see 
\cite[Sect. 4]{CoGeSaSt} and \cite[Sect. 4]{GMP}). The new noncommutative Cauchy kernel $C_\alpha(\beta)=\Delta_\alpha(\beta)^{-1}(\alpha^c-\beta)$ suggests to systematically replace $\alpha \Id - \A$ with $\Delta_\alpha(\A)$. This was done for the first time in  \cite[Definition 4.6., p. 835]{CoGeSaSt} in the quaternionic setting, by introducing the notions of quaternionic spherical spectrum, resolvent set and resolvent operator.

Let us extend these notions to all real alternative *-algebras. 

\begin{Def}\label{D:spherical spectral notions}
Let $D(\A)$ be a right $\alg$-submodule of $X$ and let $\A : D(\A) \funzione X$ be a closed right linear operator. Given $\alpha \in Q_\alg$, we define the right linear operator $\Delta_\alpha(\A) : D(\A^2) \funzione X$ by setting
\begin{equation}
  \Delta_\alpha(\A) := \A^2 - 2\re(\alpha) \,\A + |\alpha|^2 \Id.
\end{equation}
The \emph{spherical resolvent set $\rho_\s(\A)$ of $\A$} and the \emph{spherical spectrum $\sigma_\s(\A)$ of $\A$} are the circular subsets of $Q_\alg$ defined as follows:
\begin{equation}
\rho_\s(\A) := \{\alpha \in Q_\alg \, : \, \text{$\Delta_\alpha(\A)$ is bijective, $\Delta_\alpha(\A)^{-1} \in \Lin^\r(X)$}\}
\end{equation}
and
\begin{equation}
  \sigma_\s(\A) :=Q_\alg \setminus \rho_\s(\A).
\end{equation}
For every $\alpha \in \rho_\s(\A)$, we define the operators $\Q_\alpha(\A) \in \Lin^\r(X)$ and 
$\C_\alpha(\A) \in \End(_\erre X)$ by setting
\begin{equation}
  \Q_\alpha(\A) := \Delta_\alpha(\A)^{-1} 
\end{equation}
and
\begin{equation}\label{resolvent operator}
\C_\alpha(\A) := \Q_\alpha(\A) \alpha^c - \A \Q_\alpha(\A).
\end{equation}
The operator $\C_\alpha(\A)$ is called \emph{spherical resolvent operator of $\A$ at $\alpha$}.
\end{Def}

\begin{Rem}
The reader observes that the range of $\Q_\alpha(\A)$ is obviously $D(\A^2)$ for every $\alpha \in Q_\alg$. Moreover, $\Delta_\alpha(\A) = \A^2 - \A 2 \re(\alpha) + |\alpha|^2\Id$, because $\re(\alpha)$ is real.
\end{Rem}

We would like to mention that a definition that has some similarities with the spherical spectrum was given in \cite{Kap} in the context of real *-algebras.

Now we collect in the next proposition some significant properties of the operators just defined.

\begin{Prop}
Let $\A : D(\A) \funzione X$ be a closed right linear operator with $D(\A)$ dense in $X$. Then, for every 
$\alpha, \beta \in \rho_\s(\A)$, we have:
\begin{align}
&\label{Q e A commutano}
\Q_\alpha(\A) \A x = \A \Q_\alpha(\A)x \qquad \forall x \in D(\A),\\
&\label{risolvente sul dominio}
\C_\alpha(\A)x = \Q_\alpha(\A)(\alpha^c\Id - \A)x \qquad \forall x \in D(\A),\\
&\label{i Q_lambda commutano}
\Q_\alpha(\A)\Q_\beta(\A) = \Q_\beta(\A)\Q_\alpha(\A)
\end{align}
and also
\begin{align}
&\label{eq:C_alpha}
\C_\alpha(\A) \in \Lin(_\erre X). 
\end{align}
\end{Prop}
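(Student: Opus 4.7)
The plan is to establish the four identities in sequence, since \eqref{Q e A commutano} is the technical core and the others follow from it by short algebraic arguments. The common thread is that $\Delta_\alpha(\A)$ is a real polynomial in $\A$, so it ought to commute with $\A$ and with every $\Delta_\beta(\A)$; the only real work is the careful bookkeeping of the domains of $\A$, $\A^2$ and $\A^3$.

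For \eqref{Q e A commutano}, fix $x \in D(\A)$ and set $y := \Q_\alpha(\A) x$, so $y \in D(\A^2)$ and $\Delta_\alpha(\A) y = x$. The key preliminary observation is that $y$ actually lies in $D(\A^3)$: rewriting $\Delta_\alpha(\A) y = x$ as $\A^2 y = x + 2\re(\alpha)\A y - |\alpha|^2 y$ exhibits $\A^2 y$ as a sum of three elements of $D(\A)$ (namely $x$ by hypothesis, $\A y$ because $y \in D(\A^2)$, and $y$ because $D(\A^2) \subseteq D(\A)$), whence $\A^2 y \in D(\A)$. Once $y \in D(\A^3)$, the $\RR$-linearity of $\A$ yields
\[
\A x = \A \Delta_\alpha(\A) y = \A^3 y - 2\re(\alpha)\A^2 y + |\alpha|^2 \A y = \Delta_\alpha(\A) \A y,
\]
and applying $\Q_\alpha(\A)$ gives $\Q_\alpha(\A) \A x = \A y = \A \Q_\alpha(\A) x$. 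Identity \eqref{risolvente sul dominio} is then immediate: by the definition of $\C_\alpha(\A)$, \eqref{Q e A commutano}, and the additivity of $\Q_\alpha(\A)$,
\[
\C_\alpha(\A) x = \Q_\alpha(\A)(\alpha^c x) - \A \Q_\alpha(\A) x = \Q_\alpha(\A)(\alpha^c x) - \Q_\alpha(\A)(\A x) = \Q_\alpha(\A)(\alpha^c \Id - \A) x.
\]

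For \eqref{i Q_lambda commutano}, I would first upgrade \eqref{Q e A commutano} to the commutation of $\Q_\alpha(\A)$ with every $\Delta_\beta(\A)$ on $D(\A^2)$. Given $z \in D(\A^2)$, applying \eqref{Q e A commutano} to $z$ and then to $\A z \in D(\A)$ (noting that $\A \Q_\alpha(\A) z \in D(\A)$ since $\Q_\alpha(\A) z \in D(\A^2)$) gives $\Q_\alpha(\A) \A^k z = \A^k \Q_\alpha(\A) z$ for $k = 1, 2$, from which $\Q_\alpha(\A) \Delta_\beta(\A) z = \Delta_\beta(\A) \Q_\alpha(\A) z$ follows by additivity and real linearity. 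Now fix $x \in X$; since $\Q_\beta(\A) x \in D(\A^2)$, this gives
\[
\Delta_\beta(\A) \Q_\alpha(\A) \Q_\beta(\A) x = \Q_\alpha(\A) \Delta_\beta(\A) \Q_\beta(\A) x = \Q_\alpha(\A) x = \Delta_\beta(\A) \Q_\beta(\A) \Q_\alpha(\A) x,
\]
and the injectivity of $\Delta_\beta(\A)$ produces the desired commutation.

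Finally, for \eqref{eq:C_alpha}, both summands of $\C_\alpha(\A) = \Q_\alpha(\A)\alpha^c - \A \Q_\alpha(\A)$ are $\RR$-linear and defined on all of $X$ (the second because $\Q_\alpha(\A)$ maps into $D(\A^2) \subseteq D(\A)$). The first is continuous since left multiplication by $\alpha^c$ is bounded by $|\alpha|$ via \eqref{pos omog su V} and $\Q_\alpha(\A) \in \Lin^\r(X)$. For the second, a short sequence argument using the continuity of $\Q_\alpha(\A)$ and the closedness of $\A$ shows that $\A \Q_\alpha(\A)$ is closed; being closed and everywhere defined on the Banach space $X$, it is bounded by the closed graph theorem. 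The main obstacle in the whole argument is the domain bookkeeping needed in \eqref{Q e A commutano} to secure $\Q_\alpha(\A)(D(\A)) \subseteq D(\A^3)$; everything thereafter is algebraic manipulation or standard functional analysis.
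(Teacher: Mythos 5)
Your proof is correct and follows the same overall strategy as the paper's (use \eqref{Q e A commutano} as the technical core, derive \eqref{risolvente sul dominio} and \eqref{i Q_lambda commutano} algebraically, and obtain \eqref{eq:C_alpha} from the closed graph theorem). The one genuine added value is that you carry out the domain bookkeeping explicitly: you verify $\Q_\alpha(\A)(D(\A)) \subseteq D(\A^3)$ by rewriting $\A^2 y = x + 2\re(\alpha)\A y - |\alpha|^2 y$ and checking that each summand lies in $D(\A)$, which is precisely what legitimizes the phrase ``$\Delta_\alpha(\A)$ commutes with $\A$'' that the paper uses without comment (and indeed the paper's displayed computation is only stated for $x\in D(\A^2)$, whereas the proposition asserts the identity for all $x\in D(\A)$). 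For \eqref{i Q_lambda commutano} you apply $\Delta_\beta(\A)$ on the left and cancel by injectivity instead of inserting $\Id=\Q_\beta(\A)\Delta_\beta(\A)$ inside the product as the paper does; these are equivalent rearrangements of the same idea. Your treatment of \eqref{eq:C_alpha}, spelling out that $\A\Q_\alpha(\A)$ is closed and everywhere defined before invoking the closed graph theorem, is just a more explicit version of the paper's one-line appeal to that theorem.
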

\begin{proof}
First we observe that $\Delta_\alpha(\A)$ commutes with $\A$, hence for every $x \in D(\A^2)$ we have that
\begin{align}
 \Q_\alpha(\A)\A x 
   & = \Q_\alpha(\A)\A\Delta_\alpha(\A)\Q_\alpha(\A)x \notag \\
   &   = \Q_\alpha(\A)\Delta_\alpha(\A) \A\Q_\alpha(\A)x
        = \A\Q_\alpha(\A)x, \notag
\end{align}
i.e. \eqref{Q e A commutano} holds. Formula \eqref{risolvente sul dominio} immediately follows. Concerning 
\eqref{i Q_lambda commutano} observe that, by \eqref{Q e A commutano}, $\Delta_\alpha(\A)$ and $\Q_\beta(\A)$ commute, hence
\begin{align}
  \Q_\alpha(\A) \Q_\beta(\A) 
    & = \Q_\beta(\A)\Delta_\beta(\A) \Q_\alpha(\A)  \Q_\beta(\A) \notag \\
    & = \Q_\beta(\A) \Q_\alpha(\A) \Delta_\beta(\A) \Q_\beta(\A) = \Q_\beta(\A) \Q_\alpha(\A). \notag
\end{align}

It is clear 
that 
$\C_\alpha(\A)$ is 
$\erre$-linear and its domain is $X$. Therefore, by the closed graph theorem, we infer that $\C_\alpha(\A)$ is continuous and \eqref{eq:C_alpha} is proved.
\end{proof}

\begin{Lem}\label{L:Calpha - AC = I}
Let $\A : D(\A) \funzione X$ be a closed right linear operator with $D(\A)$ dense in $X$ and let $\alpha \in \alg$. Assume that either $\alg$ is associative or $\alpha \in \erre$. Then we have:
\begin{equation} \label{P:Calpha - AC = I}
\C_\alpha(\A)\alpha-\A\C_\alpha(\A) = \Id.
\end{equation}
\end{Lem}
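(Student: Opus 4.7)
The plan is to evaluate both sides on a generic $u \in X$ and compute directly from the definition $\C_\alpha(\A) = \Q_\alpha(\A)\alpha^c - \A\Q_\alpha(\A)$. Unfolding the operator notation of \eqref{funzione per scalare a sx}--\eqref{funzione per scalare a dx}, one obtains
\[
(\C_\alpha(\A)\alpha - \A\C_\alpha(\A))(u) = \Q_\alpha(\A)(\alpha^c(\alpha u)) - \A(\Q_\alpha(\A)(\alpha u)) - \A(\Q_\alpha(\A)(\alpha^c u)) + \A^2(\Q_\alpha(\A)(u)),
\]
where applying $\A^2$ is legitimate since the range of $\Q_\alpha(\A) = \Delta_\alpha(\A)^{-1}$ is $D(\A^2)$.

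The first summand is handled by the identity $\alpha^c(\alpha u) = |\alpha|^2 u$. When $\alpha \in \RR$, this is immediate from \eqref{ass del prod per reali-sx} since $\alpha^c = \alpha$ and $|\alpha|^2 = \alpha^2$; in the associative case it follows from \eqref{ass del prod per scalari-sx} applied to the scalar relation $\alpha^c\alpha = n(\alpha) = |\alpha|^2 \in \RR$. Thus the first summand equals $|\alpha|^2 \Q_\alpha(\A)(u)$. For the two middle summands, I would combine them using additivity of $\Q_\alpha(\A)$ and the left distributivity of the bimodule action: since $\alpha u + \alpha^c u = (\alpha + \alpha^c)u = 2\re(\alpha)\, u$ and $2\re(\alpha) \in \RR$, the $\RR$-linearity of $\Q_\alpha(\A)$ and of $\A$ (which follows from right linearity together with \eqref{ass del prod per reali-sx}) collapses the middle contribution into $-2\re(\alpha)\,\A(\Q_\alpha(\A)(u))$.

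Reassembling these simplifications gives
\[
(\C_\alpha(\A)\alpha - \A\C_\alpha(\A))(u) = (\A^2 - 2\re(\alpha)\A + |\alpha|^2 \Id)\,\Q_\alpha(\A)(u) = \Delta_\alpha(\A)\Q_\alpha(\A)(u) = u,
\]
the last equality holding on all of $X$ because $\Q_\alpha(\A)$ is the inverse of $\Delta_\alpha(\A)$. This yields \eqref{P:Calpha - AC = I}.

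The only genuinely delicate point is the identity $\alpha^c(\alpha u) = |\alpha|^2 u$, and that is precisely where the dichotomy hypothesis enters: in the purely non-associative case with $\alpha$ non-real, the iterated left action is not guaranteed to reduce to multiplication by the $\alg$-product $\alpha^c \alpha$, so the elementary calculation above breaks down and a genuinely different argument would be needed.
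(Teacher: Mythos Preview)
Your argument is correct and essentially the same as the paper's: both reduce the claim to the identity $\Delta_\alpha(\A)\Q_\alpha(\A)=\Id$ via the relations $\alpha+\alpha^c=2\re(\alpha)$ and $\alpha^c\alpha=|\alpha|^2$, and both invoke the hypothesis precisely to justify $\alpha^c(\alpha u)=(\alpha^c\alpha)u$. The only difference is direction---the paper starts from $\Id=\Delta_\alpha(\A)\Q_\alpha(\A)$ and refactors at the operator level into $-\A\C_\alpha(\A)+\C_\alpha(\A)\alpha$, whereas you expand $\C_\alpha(\A)\alpha-\A\C_\alpha(\A)$ elementwise and collapse it back to $\Delta_\alpha(\A)\Q_\alpha(\A)$; one minor quibble is that the $\RR$-linearity you use comes from right linearity together with \eqref{prod per reali} rather than \eqref{ass del prod per reali-sx}.
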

\begin{proof}
It holds:
\begin{align}
  \Id & = (\A^2 - 2\re(\alpha) \A + |\alpha|^2\Id) \Q_\alpha(\A)  \notag \\
    & = \A^2\Q_\alpha(\A) - \A\Q_\alpha(\A)2\re(\alpha) + \Q_\alpha(\A)|\alpha|^2 \notag \\
    & = \A^2\Q_\alpha(\A) - \A\Q_\alpha(\A)(\alpha^c+\alpha) + \Q_\alpha(\A)\alpha^c\alpha \notag \\
    & = \A(\A\Q_\alpha(\A) - \Q_\alpha(\A)\alpha^c) - (\A\Q_\alpha(\A) - \Q_\alpha(\A)\alpha^c)\alpha \notag \\
    & = -\A\C_\alpha(\A) + \C_\alpha(\A)\alpha, \notag
\end{align}
as desired.
\end{proof}

\begin{Cor}
Let $\A : D(\A) \funzione X$ be a closed right linear operator with $D(\A)$ dense in $X$. Then 
$\rho(\A) \cap \erre = \rho_\s(\A) \cap \erre$ and 
\begin{equation}
\C_\lambda(\A) = \R_\lambda(\A)  
\qquad \forall \lambda \in \rho_\s(\A) \cap \erre.
\end{equation}
In particular $\C_\lambda(\A) \in \Lin^\r(X)$ for $\lambda \in \rho_\s(\A) \cap \erre$. 
\end{Cor}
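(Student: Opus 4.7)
The plan rests on one simple algebraic observation: for real $\lambda$, since $\lambda^c = \lambda$, $\re(\lambda) = \lambda$ and $|\lambda|^2 = \lambda^2$, the operator $\Delta_\lambda(\A)$ on $D(\A^2)$ factors as
\[
\Delta_\lambda(\A) = \A^2 - 2\lambda\,\A + \lambda^2\Id = (\lambda\Id - \A)^2.
\]
Once we have this factorization, both statements of the corollary follow quickly.

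First I would prove the set equality $\rho(\A)\cap\erre = \rho_\s(\A)\cap\erre$. If $\lambda \in \rho(\A)\cap\erre$, then $\lambda\Id - \A$ is bijective and $\R_\lambda(\A) \in \Lin^\r(X)$ (as noted right after Definition \ref{D:classical spectral notions}, the resolvent at a real point is a bounded right linear operator even without associativity). Then $\Delta_\lambda(\A) = (\lambda\Id - \A)^2$ is bijective from $D(\A^2)$ onto $X$ with inverse $\R_\lambda(\A)^2 \in \Lin^\r(X)$, so $\lambda \in \rho_\s(\A)$. Conversely, if $\lambda \in \rho_\s(\A)\cap\erre$, bijectivity of the square $(\lambda\Id - \A)^2$ forces both injectivity and surjectivity of $\lambda\Id - \A$: the kernel of $\lambda\Id - \A$ is contained in the kernel of $(\lambda\Id - \A)^2$, hence is trivial; and for any $y \in X$, writing $y = (\lambda\Id - \A)^2 x = (\lambda\Id - \A)\bigl((\lambda\Id - \A)x\bigr)$ shows $y$ lies in the range of $\lambda\Id - \A$. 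By Lemma \ref{L:inverse is linear}, the inverse is right linear on $X$, and since $\A$ is closed the graph of $(\lambda\Id - \A)^{-1}$ is closed, so boundedness follows from the closed graph theorem on $_\erre X$. Thus $\lambda \in \rho(\A)$.

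For the identification $\C_\lambda(\A) = \R_\lambda(\A)$, I would invoke Lemma \ref{L:Calpha - AC = I} with $\alpha = \lambda \in \erre$ (the hypothesis $\alpha \in \erre$ is met, so associativity of $\alg$ is not needed). It yields
\[
(\lambda\Id - \A)\,\C_\lambda(\A) = \lambda\,\C_\lambda(\A) - \A\,\C_\lambda(\A) = \Id \quad \text{on } X.
\]
Applying $\R_\lambda(\A)$ on the left (which is legitimate by the first part, since $\lambda \in \rho(\A)$) gives $\C_\lambda(\A) = \R_\lambda(\A)$; in particular $\C_\lambda(\A) \in \Lin^\r(X)$, settling the final assertion.

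There is no real obstacle: the whole argument hinges on the factorization $\Delta_\lambda(\A) = (\lambda\Id - \A)^2$ together with Lemma \ref{L:Calpha - AC = I}. The only point that requires a moment of care is verifying that bijectivity of the square of $\lambda\Id - \A$ transfers to bijectivity of $\lambda\Id - \A$ itself (above), and then appealing to the closed graph theorem in the $\erre$-linear setting to guarantee the resolvent's boundedness in the non-associative case.
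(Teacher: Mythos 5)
Your proof is correct and follows essentially the same route as the paper's: both rest on the factorization $\Delta_\lambda(\A) = (\lambda\Id - \A)^2$ for real $\lambda$, and both use Lemma \ref{L:Calpha - AC = I} to identify $\C_\lambda(\A)$ with $\R_\lambda(\A)$. The only minor divergence is in deducing bijectivity of $\lambda\Id - \A$ from that of $\Delta_\lambda(\A)$: the paper exhibits $(\lambda\Id-\A)\Q_\lambda(\A)$ as an explicit two-sided inverse via the commutation relation \eqref{Q e A commutano}, whereas you argue abstractly that bijectivity of a square forces bijectivity of the operator itself and then invoke the closed graph theorem, a valid and slightly more elementary variant.
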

\begin{proof}
We begin by observing that $\Delta_\lambda(\A) = (\lambda \Id - \A)^2$ whenever $\lambda \in \erre$. Let $\lambda \in \rho_s(\A) \cap \erre$. Since $Q_\lambda(\A)$ commutes with $\A$ (see \eqref{Q e A commutano}) and with $\lambda\Id$ (because $\lambda \in \erre$), we have that
\[
\big((\lambda \Id - \A) \Q_\lambda(\A)\big) (\lambda \Id - \A)=(\lambda \Id - \A)\big((\lambda \Id - \A) \Q_\lambda(\A)\big)=\Id.
\]
Hence $\lambda \in \rho(\A)$ and
\[
\R_\lambda(\A) = (\lambda \Id - \A)^{-1} = (\lambda \Id - \A)\Q_\lambda(\A) = \lambda \Q_\lambda(\A) - \A \Q_\lambda(\A)=\C_\lambda(\A),
\]
where the last equality holds by virtue of  \eqref{P:Calpha - AC = I}. On the other hand, if $\lambda \in \rho(\A) \cap \erre$, then 
$(\lambda \Id - \A)^{-1}$ exists and is continuous by the closed graph theorem. It follows that $\Delta_\lambda(\A)$ is bijective and 
$\Delta_\lambda(\A)^{-1} \in \Lin^\r(X)$, thus $\lambda \in \rho_s(\A)$ and, as in the last formula,
$\C_\lambda(\A) = \R_\lambda(\A)$.
\end{proof}

In the remaining part of this subsection, we assume that
\begin{center}
\emph{$\alg$ is associative.}
\end{center}

Let now consider bounded right linear operators. First, for the sake of completeness, we give the proof of the following lemma, whose quaternionic version was proved in \cite[Theorem 4.2, p. 832]{CoGeSaSt}.

\begin{Lem}\label{L:s-risolvente di B bdd}
If $\B \in \mathscr{L}^\r(X)$, then
\begin{align}\label{Taylor series for the resolvent}
  \C_\alpha(\B) = \sum_{n=0}^\infty \B^n \alpha^{-(n+1)} \qquad \forall \alpha \in Q_\alg, \ |\alpha| > \norma{\B}{}.
\end{align}
Moreover, $\sigma_\s(\B)$ is closed in $Q_\alg$ and 
\begin{equation}\label{spettro sferico nella palla}
  \sigma_\s(\B) \subseteq \{\alpha \in Q_\alg \, : \, |\alpha| \leq \norma{\B}{}\}.
\end{equation}
\end{Lem}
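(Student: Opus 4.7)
My plan is to establish \eqref{Taylor series for the resolvent} first, and then derive both claims about $\sigma_\s(\B)$ from it. I interpret the symbol $\B^n\alpha^{-(n+1)}$ via \eqref{funzione per scalare a dx} as the operator $x\mapsto \B^n(\alpha^{-(n+1)}x)$. Because $\alpha^{-(n+1)}\in Q_\alg$, the norm equality in \eqref{eq:homog} gives $\norma{\B^n(\alpha^{-(n+1)}x)}{}\le \norma{\B}{}^{\,n}\,|\alpha|^{-(n+1)}\norma{x}{}$, so the series converges absolutely in operator norm when $|\alpha|>\norma{\B}{}$; its sum $T_\alpha$ lies in $\Lin^\r(X)$ by Lemma \ref{L:L^r chiuso in L}. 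A termwise computation using right linearity of $\B$, the centrality of real scalars, and the identities $2\re(\alpha)\alpha^{-(n+1)}=\alpha^{-n}+\alpha^c\alpha^{-(n+1)}$ and $|\alpha|^2\alpha^{-(n+1)}=\alpha^c\alpha^{-n}$ (valid in the commutative subalgebra $\CC_\j\ni\alpha$) collapses telescopically to
\[
\Delta_\alpha(\B)\,T_\alpha(x)=\alpha^c x-\B(x)\qquad(x\in X).
\]

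The main technical step, and the real obstacle, is showing that $\Delta_\alpha(\B)$ is invertible whenever $|\alpha|>\norma{\B}{}$, because the classical Neumann factorization of $\alpha\Id-\B$ is unavailable: left multiplication by $\alpha$ does not commute with the merely right linear operator $\B$. To bypass this I introduce the right multiplication operator $R_\gamma\in\Lin({}_\erre X)$ defined by $R_\gamma(x):=x\gamma$ for $\gamma\in\alg$. Right linearity of $\B$ amounts exactly to $R_\gamma\B=\B R_\gamma$ for every $\gamma\in\alg$, and for $\gamma,\delta\in\CC_\j$ one has $R_\gamma R_\delta=R_{\gamma\delta}$ by commutativity of $\CC_\j$ and associativity of $\alg$. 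Combined with $\alpha\alpha^c=|\alpha|^2$ and $\alpha+\alpha^c=2\re(\alpha)$, these commutations yield the key factorization
\[
\Delta_\alpha(\B)=(R_\alpha-\B)(R_{\alpha^c}-\B).
\]
Each factor is invertible in $\Lin({}_\erre X)$: writing $R_\alpha-\B=R_\alpha(\Id-R_{\alpha^{-1}}\B)$ with $\norma{R_{\alpha^{-1}}\B}{}\le\norma{\B}{}/|\alpha|<1$ and $R_\alpha^{-1}=R_{\alpha^{-1}}$, the classical Neumann series supplies $(R_\alpha-\B)^{-1}$, and similarly for the other factor since $|\alpha^c|=|\alpha|$. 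Hence $\Delta_\alpha(\B)$ is bijective with bounded inverse; this inverse is right $\alg$-linear by Lemma \ref{L:inverse is linear}, so $\alpha\in\rho_\s(\B)$. Coupled with the identity above and \eqref{risolvente sul dominio}, this gives $T_\alpha=\Q_\alpha(\B)(\alpha^c\Id-\B)=\C_\alpha(\B)$, establishing \eqref{Taylor series for the resolvent}.

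The inclusion \eqref{spettro sferico nella palla} is now exactly the invertibility just proved. For the closedness of $\sigma_\s(\B)$, observe that $Q_\alg\ni\alpha\mapsto\Delta_\alpha(\B)=\B^2-2\re(\alpha)\B+|\alpha|^2\Id\in\Lin^\r(X)$ is continuous, while $\GL^\r(X)$ is open in $\Lin^\r(X)$ by Corollary \ref{C:A+B invertibile}; hence $\rho_\s(\B)$ is open in $Q_\alg$ and $\sigma_\s(\B)$ is closed.
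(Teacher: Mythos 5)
Your proof is correct, and it makes explicit a step that the paper's own proof leaves unjustified. The paper computes the identity $\Delta_\alpha(\B)\,T_\alpha=\alpha^c\Id-\B$ with $T_\alpha:=\sum_{n\ge 0}\B^n\alpha^{-(n+1)}$, then concludes \eqref{Taylor series for the resolvent} ``from \eqref{risolvente sul dominio}'' and infers \eqref{spettro sferico nella palla} ``as a consequence.'' But \eqref{risolvente sul dominio} presupposes $\alpha\in\rho_\s(\B)$, and the identity $\Delta_\alpha(\B)T_\alpha=\alpha^c\Id-\B$ by itself only gives surjectivity of $\Delta_\alpha(\B)$: since $T_\alpha$ does not commute with $\Delta_\alpha(\B)$ in this noncommutative setting, one cannot turn this one-sided relation into bijectivity, so the inclusion $\{\alpha\in Q_\alg:|\alpha|>\norma{\B}{}\}\subseteq\rho_\s(\B)$ is not actually established by the printed argument. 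You correctly identify this as the real obstacle and supply the missing ingredient: the factorization $\Delta_\alpha(\B)=(R_\alpha-\B)(R_{\alpha^c}-\B)$ through right-multiplication operators $R_\gamma$, which commute with $\B$ precisely because $\B$ is right linear and which multiply as $R_\gamma R_\delta=R_{\gamma\delta}$ on the commutative slice $\CC_\j$; a Neumann series in each factor, using $\norma{R_{\alpha^{-1}}}{}=|\alpha|^{-1}$ from \eqref{eq:homog}, then yields bijectivity of $\Delta_\alpha(\B)$ and hence $\alpha\in\rho_\s(\B)$. The remaining steps of your argument --- the telescoping computation, applying Lemma \ref{L:inverse is linear} to the right linear $\Delta_\alpha(\B)$ once its bijectivity is known, and deducing closedness of $\sigma_\s(\B)$ from continuity of $\alpha\longmapsto\Delta_\alpha(\B)$ together with openness of $\GL^\r(X)$ (Corollary \ref{C:A+B invertibile}) --- coincide with the paper's. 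In short, your route is the paper's sketch made complete; the $R_\gamma$-factorization is the extra idea that the paper omits, implicitly deferring to the cited quaternionic reference.
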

\begin{proof}
Let us note that for $|\alpha| > \norma{\B}{}$ the series converges and
\begin{align}
  \Delta_\alpha(\B) \sum_{n=0}^\infty \B^n \alpha^{-(n+1)} & = \sum_{n=0}^\infty (\B^2 - 2\re(\alpha) \B + |\alpha|^2 \Id)\B^n \alpha^{-(n+1)}  \notag \\
    & = \sum_{n=0}^\infty (\B^{n+2}\alpha^{-(n+1)} - \B^{n+1}(\alpha + \alpha^c)\alpha^{-(n+1)}+\B^n \alpha\alpha^c \alpha^{-(n+1)}) \notag \\
    & = \sum_{n=0}^\infty (\B^{n+2}\alpha^{-(n+1)} - \B^{n+1}\alpha^{-n} + \B^{n+1}\alpha^c\alpha^{-(n+1)} +\B^n \alpha^c \alpha^{-n}) \notag \\
    & = \alpha^c\Id - \B, 
\end{align}
hence, as $D(\B) = X$, from \eqref{risolvente sul dominio} we obtain \eqref{Taylor series for the resolvent}. As a consequence, we infer \eqref{spettro sferico nella palla}. From the continuity of $Q_\alg \lra \Lin^\r(X):\alpha \longmapsto \Delta_\alpha(\B)$ and Corollary \ref{C:A+B invertibile}, it follows that $\rho_\s(\B)$ is open, hence the spherical spectrum of $\B$ is closed.
\end{proof}

\begin{Lem} \label{lem:holomorphic}
Let $\B \in \Lin^\r(X)$ and let $\j \in \su_\alg$. Then the map $\rho_\s(\B) \cap \CC_\j \lra \Lin^\r(X)$, sending $\alpha=a+b\j$ into $\C_\alpha(\B)$
and its partial derivatives $\partial_a\C_\alpha(\B)$ and $\partial_b\C_\alpha(\B)$ 
are continuous and 
satisfy the 
equation
\[
\partial_a\C_\alpha(\B)+\partial_b\C_\alpha(\B)\j=0.
\]
\end{Lem}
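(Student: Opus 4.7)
The plan is to first establish smoothness of $\alpha \mapsto \Q_\alpha(\B)$ on $\rho_\s(\B) \cap \CC_\j$, transfer it to $\C_\alpha(\B)$, and then verify the slice Cauchy--Riemann identity by a direct computation that reduces to a clean algebraic identity whose validity depends on associativity of $\alg$.

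Writing $\alpha = a + b\j$, the map $(a,b) \mapsto \Delta_\alpha(\B) = \B^2 - 2a\B + (a^2+b^2)\Id$ is polynomial into $\Lin^\r(X)$. Since $\GL^\r(X)$ is open in $\Lin^\r(X)$ by Corollary \ref{C:A+B invertibile}, and inversion there is smooth via the usual Neumann-series argument in $\Lin(_\erre X)$---with right linearity of the inverse preserved by Lemma \ref{L:L^r chiuso in L}---the map $\alpha \mapsto \Q_\alpha(\B) = \Delta_\alpha(\B)^{-1}$ is of class $C^\infty$ on $\rho_\s(\B) \cap \CC_\j$, with partial derivatives given by $\partial_\ast \Q_\alpha(\B) = -\Q_\alpha(\B)(\partial_\ast \Delta_\alpha(\B))\Q_\alpha(\B)$. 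Continuity and smoothness of $\C_\alpha(\B) = \Q_\alpha(\B) L_{\alpha^c} - \B\,\Q_\alpha(\B)$ then follow, where I write $L_\gamma \in \Lin^\r(X)$ for left multiplication by $\gamma \in \alg$ and note that $L_{\alpha^c} = a\Id - b L_\j$ depends linearly on $(a,b)$.

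The heart of the argument is the Cauchy--Riemann check. From $\partial_a \Delta_\alpha(\B) = 2a\Id - 2\B$ and $\partial_b \Delta_\alpha(\B) = 2b\Id$, together with the commutation $\B\,\Q_\alpha(\B) = \Q_\alpha(\B)\,\B$ (equation \eqref{Q e A commutano}, valid on all of $X$ as $\B$ is bounded), I would obtain, writing $\Q := \Q_\alpha(\B)$ and $M' := L_\alpha - \B$,
\[
(\partial_a \Q) + (\partial_b \Q) L_\j = 2\Q^2 \B - 2a\Q^2 - 2b\Q^2 L_\j = -2\Q^2 M'.
\]
Applying the product rule to $\C_\alpha(\B) = \Q L_{\alpha^c} - \B\,\Q$, using $\B\,\Q^2 = \Q^2\,\B$, $L_\j^2 = -\Id$, and the fact that $L_\j$ commutes with $L_{\alpha^c}$ (both lie in the commutative subalgebra $L_{\CC_\j}$), I would arrive at
\[
\partial_a \C_\alpha(\B) + (\partial_b \C_\alpha(\B)) L_\j = 2\Q^2\bigl(\B M' - M' L_{\alpha^c}\bigr) + 2\Q.
\]

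The decisive algebraic identity is then
\[
\B M' - M' L_{\alpha^c} = \B L_{\alpha+\alpha^c} - \B^2 - L_{\alpha\alpha^c} = 2a\B - \B^2 - |\alpha|^2\Id = -\Delta_\alpha(\B),
\]
which relies on associativity of $\alg$ so that $L$ is an algebra homomorphism ($L_\alpha L_{\alpha^c} = L_{\alpha\alpha^c}$) and on the facts $\alpha + \alpha^c = 2a \in \RR$, $\alpha\alpha^c = |\alpha|^2 \in \RR$. Substituting and using $\Q\,\Delta_\alpha(\B) = \Id$ gives $\partial_a\C_\alpha(\B) + (\partial_b\C_\alpha(\B)) L_\j = -2\Q + 2\Q = 0$. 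The main hazard is bookkeeping: one must rigorously distinguish operators $L_\gamma$ from scalars $\gamma$ when applying the product rule, and invoke associativity every time a composition $L_\gamma L_\delta$ is collapsed to $L_{\gamma\delta}$; the cancellation that closes the argument is precisely what this structural input enables.
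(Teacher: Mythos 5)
Your proof is correct and takes essentially the same approach as the paper: both differentiate $\C_\alpha(\B) = \Q_\alpha(\B)(\alpha^c\Id - \B)$ via the derivative-of-inverse formula, exploit the commutation $\B\,\Q_\alpha(\B) = \Q_\alpha(\B)\,\B$, and close by recognizing that the Cauchy--Riemann combination collapses to $-2\Q_\alpha(\B)^2\Delta_\alpha(\B) + 2\Q_\alpha(\B) = 0$. The only difference is organizational --- you factor through $M' = L_\alpha - \B$ and isolate the identity $\B M' - M'L_{\alpha^c} = -\Delta_\alpha(\B)$, which makes the role of associativity explicit, whereas the paper expands $\partial_a\C_\alpha(\B)$ and $\partial_b\C_\alpha(\B)$ term by term before combining.
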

\begin{proof}
As $\alpha \longmapsto \Delta_\alpha(\B)$ and the inversion $\A \longmapsto \A^{-1}$ are continuous, the maps 
$\alpha \longmapsto \Q_\alpha(\B)$ and $\alpha \longmapsto \C_\alpha(\B)$ are continuous in $\rho_\s(\B) \cap \ci_\j$. 
By applying standard differential calculus in the real Banach space $_{\erre}\Lin^\r(X)$, we get
\begin{align}
  \partial_a \C_\alpha(\B) 
    & = -\Q_\alpha(\B)(-2\B + 2a\Id) \Q_\alpha(\B) (\alpha^c\Id - \B) + \Q_\alpha(\B) \notag \\
    & = -\Q_\alpha(\B)^{2}(-2\B + 2a\Id) (\alpha^c\Id - \B) + \Q_\alpha(\B)\\
    & = -\Q_\alpha(\B)^2(2\B^2-4a\B+2\B b \j+2a\alpha^c\Id)+ \Q_\alpha(\B), \notag
\end{align}
where we have used commutativity property \eqref{Q e A commutano}. Analogously, we infer
\begin{align}
  \partial_b \C_\alpha(\B) 
    & = -\Q_\alpha(\B) 2b\Id \Q_\alpha(\B) (\alpha^c\Id - \B) - \Q_\alpha(\B)\j \notag \\
    & = -\Q_\alpha(\B)^{2} 2b\Id  (\alpha^c\Id - \B) - \Q_\alpha(\B)\j\\
    & = -\Q_\alpha(\B)^{2}(-2b\B+2b\alpha^c\Id)- \Q_\alpha(\B)\j. \notag
\end{align}
Therefore 
the partial derivatives $\partial_a \C_\alpha(\B)$ and $\partial_b \C_\alpha(\B)$ are continuous, and 
we have
\begin{align}
\partial_a \C_\alpha(\B)+\partial_b \C_\alpha(\B)\j &=-2\Q_\alpha(\B)^{2}(\B^2-2a\B+a\alpha^c\Id+b\alpha^c\j\Id)+2\Q_\alpha(\B)\\
&= -2\Q_\alpha(\B)^2\Delta_\alpha(\B)+2\Q_\alpha(\B)=0, \notag
\end{align}
as desired.
\end{proof}

\begin{Prop} \label{P:indipendenza dell'integrale dalla curva}
Let $\B \in \Lin^\r(X)$, let $f:\OO_D \lra \alg$ be a real slice regular function with $\OO_D \cap \rho_\s(\B) \neq \emptyset$, let $\j \in \su_\alg$ and let $U$ be a bounded open subset of $\CC_\j$ whose boundary $\partial U$ is piecewise $C^1$ and whose closure is contained in $\OO_D \cap \rho_\s(\B) \cap \CC_\j$. Then we have that
\begin{equation}\label{indipendenza dell'integrale dalla curva}
\frac{1}{2\pi} \int_{\partial U} \C_\alpha(\B) \, \j^{-1} f(\alpha) \de \alpha=0,
\end{equation}
where the exact meaning of the above line integral is clarified in Appendix-Section \ref{S:appendix}.
\end{Prop}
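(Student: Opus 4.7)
The plan is to interpret the integrand as a continuously differentiable, operator-valued differential $1$-form on $U\subset\CC_\j\simeq\RR^2$ and to apply Stokes' theorem in the relevant real Banach space, reducing the claim to a Cauchy--Riemann-type identity that follows by combining Lemma~\ref{lem:holomorphic} with the slice regularity of $f$. More precisely, I would set $g(\alpha):=\C_\alpha(\B)\,\j^{-1}f(\alpha)$; since $\alg$ is associative, Proposition~\ref{P:prop of L^r(V)} makes $\Lin^\r(X)$ a Banach $\alg$-bimodule, and Lemma~\ref{lem:holomorphic} together with the $C^1$-regularity of the stem components of $f$ guarantees that $g$ is $C^1$ on an open neighborhood of the compact set $\overline U\subset\OO_D\cap\rho_\s(\B)\cap\CC_\j$, with values in $\Lin^\r(X)$.

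Writing $\alpha=a+b\j$ on $\CC_\j$ and $\de\alpha=\de a+\j\,\de b$, the $1$-form $\omega:=g(\alpha)\,\de\alpha$ becomes $\omega=g\,\de a+g\j\,\de b$, so
\[
\de\omega=\bigl(\partial_a(g\j)-\partial_b g\bigr)\,\de a\wedge\de b=\bigl(\partial_a g\cdot\j-\partial_b g\bigr)\,\de a\wedge\de b.
\]
By the Banach-space-valued Stokes theorem, the statement will follow once I verify the Cauchy--Riemann-type identity $\partial_a g+\partial_b g\,\j=0$ on $U$, which is equivalent to $\partial_a g\cdot\j-\partial_b g=0$ after right multiplication by $-\j$.

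To establish this identity I would apply the Leibniz rule to $g=\C_\alpha(\B)\,\j^{-1}f(\alpha)$, exploit associativity of $\alg$ to re-bracket the factors, and invoke the hypothesis that $f$ is \emph{real} slice regular: it forces $f(\alpha)$, $\partial_a f(\alpha)$ and $\partial_b f(\alpha)$ to lie in $\CC_\j$, and therefore to commute with $\j$ and $\j^{-1}$. A direct computation should then yield
\[
\partial_a g+\partial_b g\,\j=\bigl(\partial_a\C_\alpha(\B)+\partial_b\C_\alpha(\B)\,\j\bigr)\,\j^{-1}f(\alpha)+\C_\alpha(\B)\,\j^{-1}\bigl(\partial_a f(\alpha)+\j\,\partial_b f(\alpha)\bigr).
\]
The first summand vanishes by Lemma~\ref{lem:holomorphic}. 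The second vanishes because the restriction $f_\j:=f|_{D_\j}$ is holomorphic with respect to $\j$: applying Definition~\ref{def:slice-regular} to a real stem function $F=F_1+F_2\ui$, the equations $\partial_a F_1=\partial_b F_2$ and $\partial_b F_1=-\partial_a F_2$ translate to $\partial_a f_\j+\j\,\partial_b f_\j=0$ on $D_\j$.

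The main obstacle will be purely bookkeeping: one must carefully track the order of the non-commutative products, use associativity at each step to rearrange, and exploit the commutativity of $\j$ with $\CC_\j$-valued quantities---precisely where the hypothesis that $f$ be \emph{real} slice enters---in order to slide $f(\alpha)$ and its partials past $\j^{\pm1}$. A minor technical point is the compatibility of the line integral defined in Appendix-Section~\ref{S:appendix} with the Banach-space Stokes theorem; this should be immediate from the parametric Riemann-sum definition given there.
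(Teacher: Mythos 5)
Your proof is correct, but it takes a genuinely different route from the paper. You apply a Banach-space-valued Stokes theorem directly to the $\Lin^\r(X)$-valued $1$-form $\omega=g(\alpha)\,\de\alpha$ and reduce the claim to the Cauchy--Riemann-type identity $\partial_a g+\partial_b g\,\j=0$, which you verify by the Leibniz rule together with Lemma~\ref{lem:holomorphic} and the holomorphy of the restriction $f_\j$ (the latter following from the reality of the stem components via Definition~\ref{def:slice-regular}). The paper instead \emph{scalarizes first}: it views $\Lin^\r(X)$ as a right $\CC_\j$-Banach space $(\Lin^\r(X))_{\CC_\j}$, picks an arbitrary continuous right-$\CC_\j$-linear functional $L$, shows that $\alpha\mapsto\langle L,\C_\alpha(\B)\rangle\,\j^{-1}f(\alpha)$ is holomorphic on $\CC_\j$ (again using Lemma~\ref{lem:holomorphic} and real slice regularity of $f$), applies the \emph{classical} Cauchy theorem to get $\langle L,\int_{\partial U}\cdots\rangle=0$, and then separates points with the complex Hahn--Banach theorem. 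Both arguments turn on exactly the same two Cauchy--Riemann identities for $\C_\alpha(\B)$ and for $f_\j$, so they are equivalent in content. What each buys: your direct Stokes argument is conceptually shorter and avoids introducing the auxiliary functional $L$, but it leans on a Banach-valued Stokes theorem, which is not proved in the paper and would need to be either cited or established (typically itself by the same Hahn--Banach scalarization you are avoiding). The paper's route needs only the standard scalar Cauchy theorem and the Hahn--Banach theorem, which is why the authors chose it; there is also a small bonus in that the same scalarization is reused verbatim in Remark~2.21 to prove nonemptiness of the spherical spectrum. If you wish to keep your Stokes-based version, you should at minimum note explicitly that the line integral of Appendix-Section~\ref{S:appendix} is a Bochner integral and that the required Stokes theorem for $C^1$ Banach-space-valued forms on a piecewise $C^1$ planar domain holds (one can prove it by Hahn--Banach reduction to the scalar case, or cite a standard reference).
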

\begin{proof}
By Lemma \ref{lem:holomorphic}, the map $\partial U \lra \Lin^\r(X):\alpha \longmapsto \C_\alpha(\B) \, \j^{-1} f(\alpha)$ is continuous and hence the line integral in \eqref{indipendenza dell'integrale dalla curva} makes sense (see Appendix-Section \ref{S:appendix}). Let $(\Lin^\r(X))_{\ci_\j}$ be the (right) complex Banach space obtained endowing $\Lin^\r(X)$ with the right multiplication by scalars in $\ci_\j$. Take a right linear and continuous functional $L : (\Lin^\r(X))_{\ci_\j} \funzione \ci_\j$ and define the function $g:\rho_\s(\B) \cap \CC_\j \lra \CC_\j$ by setting  $g(\alpha):=\langle L,\C_\alpha(\B)\rangle$. By Lemma \ref{lem:holomorphic}, one has that
\[
  \partial_a g(\alpha) + \partial_b g(\alpha) \j= \langle L,\partial_a \C_\alpha(\B) + \partial_b \C_\alpha(\B) \j\rangle = 0.
\]
In other words, $g$ is holomorphic with respect to the complex structure on $\CC_\j$ induced by the multiplication by $\j$. Thanks to the definition of real slice regular function, the same is true for the function $\alpha \longmapsto g(\alpha) \j^{-1} f(\alpha)$ defined on $\OO_D \cap \rho_\s(\B) \cap \CC_\j$. In this way, the classical Cauchy formula implies that
\[
\left\langle L,\frac{1}{2\pi}\int_{\partial U} \C_\alpha(\B) \, \j^{-1}f(\alpha) \de\alpha \right\rangle = \frac{1}{2\pi}\int_{\partial U} g(\alpha) \, \j^{-1} f(\alpha) \de\alpha=0.
\]
Now the result follows from the complex Hahn-Banach theorem applied in $(\Lin^r(X))_{\ci_\j}$. 
\end{proof}

\begin{Rem}
If $X \neq \{0\}$ and $\B \in \Lin^\r(X)$, then $\sigma_\s(\B) \neq \emptyset$. Let us prove this assertion. Let $\j \in \su_\alg$ and let $(\Lin^\r(X))_{\ci_\j}$, $L : (\Lin^\r(X))_{\ci_\j} \funzione \ci_\j$ and $g:\rho_\s(\B) \cap \CC_\j \lra \CC_\j$ be as in the proof of the preceding proposition. Suppose that $\sigma_\s(\B)=\emptyset$. Then $g$ is an entire holomorphic function on $\CC_\j$. Moreover, by \eqref{Taylor series for the resolvent}, we have
\[
|g(\alpha)| \leq M \|L\|_\j \, |\alpha|^{-1} \qquad \forall \alpha \in \CC_\j, \, |\alpha| \geq 1+\|B\|,
\]
where $\|L\|_\j$ is the norm of $L$ as continuous functional on $(\Lin^\r(X))_{\ci_\j}$ and $M=\sum_{n \geq 0}\|B\|^n(1+\|B\|)^{-n}$. By Liouville's theorem, $g$ is the null function. Thanks to the complex Hahn-Banach theorem, we infer that $\C_\alpha(\B)=0$ for every $\alpha \in \CC_\j$. This is impossible by the definition of spherical resolvent operator.
\end{Rem}

\begin{Lem}\label{L:identity as integral of the resolvent}
Let $\B \in \Lin^\r(X)$, let $\j \in \su_\alg$ and let $r \in \opint{0,\infty}$ such that $\sigma_\s(\B) \cap \CC_\j \subseteq B_\j(r)$, where $B_\j(r):=\{\alpha \in \CC_\j \, : \, |\alpha| <r\}$. Then we have
\begin{equation}\label{identity as integral of the resolvent}
 \Id = \frac{1}{2\pi} \int_{\partial B_\j(r)} \C_\alpha(\B) \, \j^{-1} \de \alpha. 
\end{equation}
\end{Lem}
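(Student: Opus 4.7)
The plan is to reduce to a contour over which the Neumann-type series
\[
\C_\alpha(\B) = \sum_{n=0}^\infty \B^n \alpha^{-(n+1)}
\]
from Lemma \ref{L:s-risolvente di B bdd} converges absolutely, and then integrate term by term as in the classical complex case. Throughout this subsection $\alg$ is associative, so right multiplication by scalars from $\CC_\j$ is compatible with composition of operators in $\Lin^\r(X)$, and this is what will let the classical calculation go through unchanged.

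\textbf{Step 1: contour independence.} By Lemma \ref{L:s-risolvente di B bdd}, $\sigma_\s(\B) \subseteq \{\alpha \in Q_\alg : |\alpha| \leq \norma{\B}{}\}$. Fix $R > \max\{r,\norma{\B}{}\}$ and consider the open annulus $U := B_\j(R) \setminus \overline{B_\j(r)}$ in $\CC_\j$. Its closure is the closed annulus $\{r \leq |\alpha| \leq R\}$, which by hypothesis and the choice of $R$ is contained in $\rho_\s(\B) \cap \CC_\j$. Since the constant function $1$ is real slice regular on $Q_\alg$, Proposition \ref{P:indipendenza dell'integrale dalla curva} applies with $f \equiv 1$ and yields, once $\partial U$ is oriented as $\partial B_\j(R)$ counterclockwise together with $\partial B_\j(r)$ clockwise,
\[
\frac{1}{2\pi}\int_{\partial B_\j(R)} \C_\alpha(\B)\,\j^{-1}\de\alpha \;=\; \frac{1}{2\pi}\int_{\partial B_\j(r)} \C_\alpha(\B)\,\j^{-1}\de\alpha.
\]
So it is enough to evaluate the left-hand integral for $R > \norma{\B}{}$.

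\textbf{Step 2: term-by-term integration.} Parametrize $\partial B_\j(R)$ by $\alpha(\theta) = R e^{\j\theta}$, $\theta \in [0,2\pi]$, so that $\alpha'(\theta) = \j\alpha(\theta)$. On $\partial B_\j(R)$ the series above converges uniformly in $\Lin^\r(X)$ by the Weierstrass M-test, since $\norma{\B^n\alpha^{-(n+1)}}{} \leq \norma{\B}{}^n R^{-(n+1)}$ and $\norma{\B}{}/R < 1$. Using associativity of $\alg$ to simplify $\j^{-1}\alpha'(\theta) = \j^{-1}\j\alpha(\theta) = \alpha(\theta)$ and $\alpha^{-(n+1)}\alpha = \alpha^{-n}$ (everything occurring on the scalar side in the commutative subalgebra $\CC_\j$), we obtain
\[
\int_{\partial B_\j(R)} \C_\alpha(\B)\,\j^{-1}\de\alpha
= \sum_{n=0}^\infty \B^n \int_0^{2\pi} \alpha(\theta)^{-n}\,\de\theta
= \sum_{n=0}^\infty \B^n R^{-n} \int_0^{2\pi} e^{-n\j\theta}\,\de\theta.
\]
The $\theta$-integral equals $2\pi$ for $n = 0$ and vanishes for $n \geq 1$, so the whole sum collapses to $2\pi\,\Id$. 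Dividing by $2\pi$ gives the claim.

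\textbf{Main obstacle.} The computation itself is entirely classical. The only genuine work is to justify the deformation of contour from $\partial B_\j(r)$ to $\partial B_\j(R)$ within the spherical resolvent set, which is precisely what Proposition \ref{P:indipendenza dell'integrale dalla curva} was designed for; and to be sure that the manipulations $\j^{-1}\j = 1$ and $\alpha^{-(n+1)}\alpha = \alpha^{-n}$, performed on the right of the operator factors $\B^n$, are legitimate—this is where the standing associativity assumption of the subsection is used in an essential way.
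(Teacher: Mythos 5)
Your proof is correct and takes essentially the same route as the paper: both use Proposition \ref{P:indipendenza dell'integrale dalla curva} with $f\equiv 1$ to move the contour to a circle of radius exceeding $\norma{\B}{}$, and then integrate the Neumann series of Lemma \ref{L:s-risolvente di B bdd} term by term, with only the $n=0$ term surviving. The only cosmetic difference is that the paper invokes ``the classical Cauchy formula'' for the scalar integrals $\int_{\partial B_\j(r)}\alpha^{-(n+1)}\j^{-1}\,\de\alpha$, whereas you parametrize and compute $\int_0^{2\pi}e^{-n\j\theta}\,\de\theta$ directly.
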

\begin{proof}
Thanks to Proposition \ref{P:indipendenza dell'integrale dalla curva} applied with $f$ constantly equal to $1$, we can suppose that $r > \norma{\B}{}$. By the classical Cauchy formula, and by the fact that $\B$ is right linear and continuous, we have
\[
  0 = \B^n \frac{1}{2\pi} \int_{\partial B_\j(r)} \alpha^{-(n+1)} \j^{-1} \de \alpha =
  \frac{1}{2\pi} \int_{\partial B_\j(r)} \B^n \alpha^{-(n+1)} \j^{-1} \de \alpha \qquad \forall n \ge 1
\]
and
\[
  \Id = \Id \frac{1}{2\pi} \int_{\partial B_\j(r)} \alpha^{-1} \j^{-1} \de \alpha =
  \frac{1}{2\pi} \int_{\partial B_\j(r)} \Id \alpha^{-1} \j^{-1} \de \alpha.
\]
Hence, bearing in mind Lemma \ref{L:s-risolvente di B bdd}, we infer that
\begin{align*}
\Id &= \sum_{n=0}^\infty \frac{1}{2\pi} \int_{\partial B_\j(r)} \B^n \alpha^{-(n+1)}\j^{-1} \de \alpha=\frac{1}{2\pi} \int_{\partial B_\j(r)} \sum_{n=0}^\infty \B^n \alpha^{-(n+1)}\j^{-1} \de \alpha \\ &=\frac{1}{2\pi} \int_{B_\j(r)} \C_\alpha(\B) \, \j^{-1} \de \alpha,
\end{align*}
as desired.
\end{proof}

The next lemma provides a bridge between bounded and unbounded right linear operators and is based on a technique used in \cite[Lemma VII.9.2, p. 600]{DuSc} and also in \cite[Definition 3.10]{CoSa10}.

\begin{Lem}\label{L:lemma su Phi}
Let $\A : D(\A) \funzione X$ be a closed right linear operator with $D(\A)$ dense in $X$. Assume that $\opint{0,\infty} \subseteq \rho_\s(\A)$ and take $\lambda \in \opint{0,\infty}$. Define $\Phi : Q_\alg \setminus \{\lambda\} \lra Q_\alg$ and $\B \in \mathscr{L}^\r(X)$ by setting
\begin{equation}\label{Phi(alpha)}
\Phi(\alpha):=(\alpha - \lambda)^{-1}
\end{equation}
and
\begin{equation}\label{def di B}
  \B := -\C_\lambda(\A) = -\R_\lambda(\A) = (\A - \lambda \Id)^{-1}.
\end{equation} 
Then we have:
\begin{equation}\label{relation for rho(A) and rho(B)}
  \Phi(\sigma_\s(\A)) = \sigma_\s(\B)
\end{equation}
and
\begin{equation}\label{relation for R(A) and R(B)}
  \C_\alpha(\A) = -\B \C_{\Phi(\alpha)}(\B)\Phi(\alpha) \qquad \forall \alpha \in Q_\alg \setminus \{\lambda\}.
\end{equation}
\end{Lem}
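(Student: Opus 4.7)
My plan is to exploit the algebraic identity $\A = \B^{-1} + \lambda\Id$, valid on $D(\A)$, to turn polynomial expressions in $\A$, once composed with $\B^2$, into polynomial expressions in $\B$. This produces a clean identity between $\Delta_\alpha(\A)$ and $\Delta_{\Phi(\alpha)}(\B)$, from which both assertions fall out by direct substitution. Associativity of $\alg$ is used throughout to justify the scalar-operator manipulations.

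I first secure the key identity. From $\B = (\A - \lambda\Id)^{-1}$ one gets $\A\B = \Id + \lambda\B$ on all of $X$, and hence $\A\B^2 = \B + \lambda\B^2$ and $\A^2\B^2 = \Id + 2\lambda\B + \lambda^2\B^2$ on $X$; in particular $\B^2(X) \subseteq D(\A^2)$. Substituting into $\Delta_\alpha(\A)\B^2 = \A^2\B^2 - 2\re(\alpha)\A\B^2 + |\alpha|^2\B^2$ the expression collapses to
\[
\Delta_\alpha(\A)\,\B^2 = \Id - 2\re(\alpha-\lambda)\,\B + n(\alpha-\lambda)\,\B^2 \qquad \text{on } X.
\]
Since $\alpha \in Q_\alg$ and $\lambda \in \RR$ force $\alpha - \lambda$, and hence $\Phi(\alpha) = (\alpha-\lambda)^{-1}$, to lie in $Q_\alg$, the formula $\Phi(\alpha) = (\alpha^c - \lambda)/n(\alpha-\lambda)$ gives $|\Phi(\alpha)|^2 = n(\alpha-\lambda)^{-1}$ and $2\re(\Phi(\alpha)) = 2\re(\alpha-\lambda)/n(\alpha-\lambda)$. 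Hence the right-hand side above equals $n(\alpha-\lambda)\,\Delta_{\Phi(\alpha)}(\B)$, yielding
\[
\Delta_\alpha(\A)\,\B^2 = n(\alpha-\lambda)\,\Delta_{\Phi(\alpha)}(\B) \qquad \text{on } X. \quad (\ast)
\]

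I next extract \eqref{relation for rho(A) and rho(B)} from $(\ast)$. The map $\B^{-1} = \A - \lambda\Id : D(\A) \to X$ is a bijection, so $\B^2 : X \to D(\A^2)$ is a bijection. Combined with $(\ast)$, this shows $\Delta_\alpha(\A): D(\A^2) \to X$ is bijective if and only if $\Delta_{\Phi(\alpha)}(\B): X \to X$ is bijective; since $\Delta_{\Phi(\alpha)}(\B) \in \Lin^\r(X)$, Corollary \ref{C:A+B invertibile} ensures that its inverse, when it exists, is automatically in $\Lin^\r(X)$. Hence $\alpha \in \rho_\s(\A)\setminus\{\lambda\}$ iff $\Phi(\alpha) \in \rho_\s(\B)$. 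Inverting $(\ast)$ and using the same factorization yields the convenient byproduct
\[
\B^2\,\Q_{\Phi(\alpha)}(\B) = n(\alpha-\lambda)\,\Q_\alpha(\A) \qquad (\alpha \in \rho_\s(\A) \setminus \{\lambda\}).
\]

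Finally I verify \eqref{relation for R(A) and R(B)}. Expanding the right-hand side using $\C_{\Phi(\alpha)}(\B) = \Q_{\Phi(\alpha)}(\B)\Phi(\alpha)^c - \B\,\Q_{\Phi(\alpha)}(\B)$ and $\Phi(\alpha)^c\Phi(\alpha) = |\Phi(\alpha)|^2 = n(\alpha-\lambda)^{-1}$ produces
\[
-\B\,\C_{\Phi(\alpha)}(\B)\,\Phi(\alpha) = \B^2\,\Q_{\Phi(\alpha)}(\B)\,\Phi(\alpha) - n(\alpha-\lambda)^{-1}\,\B\,\Q_{\Phi(\alpha)}(\B).
\]
On the other side, starting from $\C_\alpha(\A) = \Q_\alpha(\A)\alpha^c - \A\,\Q_\alpha(\A)$, I substitute $\Q_\alpha(\A) = n(\alpha-\lambda)^{-1}\B^2\,\Q_{\Phi(\alpha)}(\B)$ from the previous step and use $\A\B^2 = \B + \lambda\B^2$ to obtain
\[
\C_\alpha(\A) = n(\alpha-\lambda)^{-1}\,\B^2\,\Q_{\Phi(\alpha)}(\B)\,(\alpha^c - \lambda) - n(\alpha-\lambda)^{-1}\,\B\,\Q_{\Phi(\alpha)}(\B),
\]
and the identity $n(\alpha-\lambda)^{-1}(\alpha^c - \lambda) = \Phi(\alpha)$ matches the two sides. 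The main obstacle throughout is the algebraic bookkeeping of scalar-operator products in the noncommutative setting; once $(\ast)$ is available, the remainder is careful substitution.
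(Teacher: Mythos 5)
Your proof is correct and takes essentially the same route as the paper's: both hinge on the relation $\A = \B^{-1} + \lambda\Id$ and derive the key algebraic identity $\Delta_\alpha(\A)\,\B^2 = n(\alpha-\lambda)\,\Delta_{\Phi(\alpha)}(\B)$ (the paper writes it in the equivalent form $\Delta_\alpha(\A) = |\beta|^{-2}\B^{-2}\Delta_\beta(\B)$ with $\beta=\Phi(\alpha)$), from which both \eqref{relation for rho(A) and rho(B)} and \eqref{relation for R(A) and R(B)} follow by direct substitution. Your choice to post-compose with $\B^2$ rather than factoring out $\B^{-2}$ is a slightly cleaner way to organize the same computation, since it avoids handling the unbounded operator $\B^{-1}$ explicitly, but the underlying argument is identical.
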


\begin{proof}
Let $\alpha \in Q_\alg \setminus \{\lambda\}$ and let $\beta:=\Phi(\alpha)$. As $\alpha = 1/\beta + \lambda$, we have 
\begin{equation}
  \re(\alpha) = \re(\beta)/|\beta|^2 + \lambda, 
\end{equation}
hence, since $\A = \B^{-1} + \lambda \Id$, we find 
\begin{align}
  \Delta_\alpha(\A) 
    &  = \A^2 - 2\re(\alpha) \A + |\alpha|^2\Id \notag \\
    & = (\B^{-1} + \lambda\Id)^2 - 2\re(\alpha)(\B^{-1} + \lambda\Id) + |\alpha|^2\Id \notag \\
    & = \B^{-2} + 2(\lambda - \re(\alpha)) \B^{-1} + (|\lambda|^2 - 2\lambda \re(\alpha) + |\alpha|^2)\Id \notag \\
    & = \B^{-2}\left(\Id + 2(\lambda - \re(\alpha)) \B + (\alpha- \lambda)(\alpha^c - \lambda)\B^2\right) \notag \\
    & = \B^{-2}\left(|\beta|^2\Id + 2\re(\beta)/|\beta|^2 \B + |\beta|^{-2}\B^2\right) \notag \\
    & = |\beta|^{-2} \B^{-2} \Delta_\beta(\B). 
\end{align}
It follows that $\Delta_\beta(B)$ is invertible if and only if $\Delta_\alpha(\A)$ is invertible. Moreover,
\begin{equation}\label{relazione fra Q(A) e Q(B)}
  \Q_\alpha(\A) = |\beta|^2 \Q_\beta(\B) \B^2 = |\beta|^{2} \B^2 \Q_\beta(\B). 
\end{equation}
We infer at once that $\Q_\beta(\B)$ is continuous if and only if $Q_\alpha(\A)$ is continuous. This proves \eqref{relation for rho(A) and rho(B)}. Finally, equation \eqref{relation for R(A) and R(B)} is proved by means of \eqref{relazione fra Q(A) e Q(B)}, indeed
\begin{align}
  \C_\alpha(\A) 
    & = \Q_\alpha(\A)\alpha^c - \A\Q_\alpha(\A) \notag \\
    & = \Q_\alpha(\A)\left(\beta/|\beta|^2 + \lambda\right) - \A\Q_\alpha(\A) \notag \\
    & = |\beta|^{2} \B^2 \Q_\beta(\B) \left(\beta/|\beta|^2 + \lambda\right) - \A |\beta|^{2} \B^2 \Q_\beta(\B) \notag \\
    & = \B^2 \Q_\beta(\B)(\beta + |\beta|^2\lambda) - \A \B^2 \Q_\beta(\B) |\beta|^{2}\notag \\
    & = \B^2 \Q_\beta(\B)(\beta + |\beta|^2\lambda) - (\A-\lambda \Id) \B^2 \Q_\beta(\B)|\beta|^{2} - 
           \lambda \B^2 \Q_\beta(\B) |\beta|^{2} \notag \\
    & = \B^2 \Q_\beta(\B)(\beta + |\beta|^2\lambda) - \B \Q_\beta(\B)|\beta|^{2} -  \B^2 \Q_\beta(\B) \lambda |\beta|^{2} \notag \\    
    & = \B^2 \Q_\beta(\B)\beta - \B \Q_\beta(\B)|\beta|^{2} \notag \\
    & = \B\left( \B\Q_\beta(\B) - \Q_\beta(\B) \overline{\beta}\right)\beta \notag \\ 
    & = -\B\C_\beta(\B)\beta. \notag  
\end{align}
\end{proof}

By combining Lemma \ref{L:lemma su Phi} with Lemma \ref{lem:holomorphic} and Proposition \ref{P:indipendenza dell'integrale dalla curva}, we obtain at once:

\begin{Cor} \label{cor:inv-int-A}
Let $\A : D(\A) \funzione X$ be a closed right linear operator with $D(\A)$ dense in $X$ and let $\j \in \su_\alg$. Assume that $\opint{0,\infty} \subseteq \rho_\s(\A)$. Then the map $\rho_\s(\A) \cap \CC_\j \lra \Lin^\r(X):\alpha \longmapsto \C_\alpha(\A)$ 
and its partial derivatives $\partial_a\C_\alpha(\A)$ and $\partial_b\C_\alpha(\A)$ are continuous and satisfy the 
equation
\[
\partial_a\C_\alpha(\A)+\partial_b\C_\alpha(\A)\j=0.
\]

In particular, if $f:\OO_D \lra \alg$ is a real slice regular function with $\OO_D \cap \rho_\s(\A) \neq \emptyset$ and if $U$ is a piecewise $C^1$ bounded open subset of $\CC_\j$ whose closure is contained in $\OO_D \cap \rho_\s(\A) \cap \CC_\j$, then we have that
\begin{equation}\label{inv.int-A}
\frac{1}{2\pi} \int_{\partial U} \C_\alpha(\A) \, \j^{-1} f(\alpha) \de \alpha=0.
\end{equation}
\end{Cor}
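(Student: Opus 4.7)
The plan is to combine the three ingredients in exactly the way the statement suggests. First, fix an arbitrary point $\alpha_0 \in \rho_\s(\A) \cap \CC_\j$; since $\opint{0,\infty} \subseteq \rho_\s(\A)$, I can choose $\lambda \in \opint{0,\infty}$ with $\lambda \neq \alpha_0$. Define $\Phi(\alpha) := (\alpha - \lambda)^{-1}$ and $\B := -\C_\lambda(\A)$. By Lemma \ref{L:lemma su Phi}, $\B \in \Lin^\r(X)$, $\Phi(\rho_\s(\A)\setminus\{\lambda\}) \subseteq \rho_\s(\B)$, and the identity
\[
\C_\alpha(\A) = -\B\, \C_{\Phi(\alpha)}(\B)\, \Phi(\alpha)
\]
holds for all $\alpha \in Q_\alg \setminus \{\lambda\}$, and in particular on a neighbourhood of $\alpha_0$ in $\rho_\s(\A) \cap \CC_\j$.

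Second, since $\Phi$ restricted to $\CC_\j \setminus \{\lambda\}$ is a classically holomorphic $\CC_\j$-valued map, the value $\Phi(\alpha)$ and the real partial derivatives $\partial_a \Phi(\alpha)$, $\partial_b \Phi(\alpha)$ all lie in $\CC_\j$ and hence commute with $\j$; moreover $\partial_a \Phi(\alpha) + \partial_b \Phi(\alpha)\j = 0$. By Lemma \ref{lem:holomorphic} applied to the bounded operator $\B$, the map $\beta \longmapsto \C_\beta(\B)$ is $C^1$ on $\rho_\s(\B) \cap \CC_\j$ with continuous partial derivatives satisfying the Cauchy--Riemann identity in $\beta$. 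Applying the product and chain rules to the displayed formula (using associativity of $\alg$), the expression $\partial_a \C_\alpha(\A) + \partial_b \C_\alpha(\A)\j$ splits into two groups: one group, after sliding $\j$ past $\Phi(\alpha)$ on the right, reduces to a scalar-factored version of the Cauchy--Riemann identity for $\C_\beta(\B)$; the other reduces to $-\B\, \C_{\Phi(\alpha)}(\B)\bigl(\partial_a \Phi(\alpha)+\partial_b \Phi(\alpha)\j\bigr)$, which vanishes by holomorphy of $\Phi$. Continuity of the partial derivatives follows from the same chain rule, and since $\alpha_0$ was arbitrary this proves the first part of the corollary.

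For the vanishing integral \eqref{inv.int-A} I would rerun the Hahn--Banach argument used in the proof of Proposition \ref{P:indipendenza dell'integrale dalla curva}, now using the Cauchy--Riemann equation just established for $\C_\alpha(\A)$ in place of Lemma \ref{lem:holomorphic}. Concretely, for every continuous right $\CC_\j$-linear functional $L$ on $(\Lin^\r(X))_{\CC_\j}$, the scalar $g(\alpha) := \langle L, \C_\alpha(\A)\rangle$ is holomorphic on $\rho_\s(\A) \cap \CC_\j$; since $f$ is real slice regular, $\alpha \longmapsto \j^{-1} f(\alpha)$ is a $\CC_\j$-valued, classically holomorphic function on $\OO_D \cap \CC_\j$, so the classical Cauchy theorem on $\CC_\j$ gives $\int_{\partial U} g(\alpha)\,\j^{-1} f(\alpha)\,\de\alpha = 0$, and the complex Hahn--Banach theorem in $(\Lin^\r(X))_{\CC_\j}$ yields \eqref{inv.int-A}.

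The main technical obstacle I expect is the chain-rule bookkeeping in the second step: one must carefully route the right-multiplication by $\j$ past the scalars $\Phi(\alpha)$ and past the real partial derivatives of the components of $\Phi$ in order to separate the two groups of terms and recognise the Cauchy--Riemann identities for $\C_\beta(\B)$ and for $\Phi$. The facts making this legitimate are the commutation $\Phi(\alpha)\j = \j \Phi(\alpha)$, which holds because $\Phi$ takes values in $\CC_\j$, together with the associativity assumption in force throughout this subsection; without either of these the rearrangement that produces the two vanishing Cauchy--Riemann factors would fail.
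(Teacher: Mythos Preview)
Your proposal is correct and follows exactly the route the paper intends: the paper's proof is the single sentence ``by combining Lemma \ref{L:lemma su Phi} with Lemma \ref{lem:holomorphic} and Proposition \ref{P:indipendenza dell'integrale dalla curva}, we obtain at once,'' and you have faithfully supplied the chain-rule and Hahn--Banach details behind that sentence. The only cosmetic difference is that a single choice of $\lambda$ already gives the Cauchy--Riemann identity on all of $(\rho_\s(\A) \cap \CC_\j)\setminus\{\lambda\}$, so localising at $\alpha_0$ is unnecessary (one just picks a second $\lambda$ to cover the missing point).
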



\section{Generation theorems for semigroups in real or complex Banach spaces}\label{S:classical generation theorems}

In this section, for later use, we recall the basic results of the theory of operator semigroups in a real or complex Banach space. For the proofs, we refer e.g. to \cite{EnNa, Pa} and we stress the fact that in these monographs the functional calculus is not used.
We assume that
\begin{equation}\label{Y C-Banach space}
  \text{\emph{$Y$ is a real or complex Banach space with norm $\norma{\cdot}{}$.}}
\end{equation}

\begin{Def}
A mapping $\T : \clsxint{0,\infty} \funzione \Lin(Y)$ is called \emph{(operator) semigroup} if
\begin{align}
  & \T(t+s) = \T(t)\T(s) \qquad \forall t, s > 0, \\
  & \T(0) = \Id.
\end{align}
We also say that $\T$ is a \emph{semigroup in $Y$}. A semigroup $\T$ 
in $Y$ 
is called \emph{uniformly continuous} if $\T \in C(\clsxint{0,\infty};\Lin(Y))$. A semigroup $\T$ is called \emph{strongly continuous} if $\T(\cdot)y \in C(\clsxint{0,\infty};Y)$ for every $y \in Y$.
\end{Def}

The case of uniformly continuous semigroups can be treated by means of the operator-valued exponential, and is described by the following theorem, whose proof can be found, e.g., in \cite[Section 1.1]{Pa}.

\begin{Thm}\label{T:generation thm for unif cont semigroups}
The following assertions hold.
\begin{itemize}
\item[$(\mr{a})$]
  If $\T : \clsxint{0,\infty} \funzione \Lin(Y)$ is a uniformly continuous semigroup then $\T$ is differentiable and, setting $\A := \T'(0) \in \Lin(Y)$, one has
\begin{align}
  & \T'(t) = \A\T(t) \qquad \forall t \ge 0, \label{DE-unif. cont. case} \\
  & \T(0) = \Id.
\end{align}
Moreover, $\T(t) = e^{t\A}$ for all $t \ge 0$.
\item[$(\mr{b})$]
  If $\A \in \Lin(Y)$, the map $\T : \clsxint{0,\infty} \funzione \Lin(Y) : t \longmapsto e^{t\A}$ is a uniformly continuous semigroup and it is the unique solution of the Cauchy problem
\begin{align}
  & \T'(t) = \A \T(t) \qquad \forall t \ge 0, \\
  & \T(0) = \Id.
\end{align}  
\end{itemize}
\end{Thm}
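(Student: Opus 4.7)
The plan is to dispatch part (b) first by constructing the exponential as a norm-convergent power series, and then to use that existence/uniqueness statement as a black box in the harder direction (a).

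For part (b), I would set
\[
e^{t\A}:=\sum_{n=0}^\infty \frac{t^n\A^n}{n!},
\]
the series converging absolutely in $\Lin(Y)$ by the estimate $\|\A^n\|\le\|\A\|^n$ and the majorant $e^{|t|\|\A\|}$. The semigroup property $e^{(t+s)\A}=e^{t\A}\,e^{s\A}$ follows from the Cauchy product of absolutely convergent series, using that $t\A$ and $s\A$ commute so that the binomial theorem applies to $(t\A+s\A)^n$. Uniform continuity in $t$ is immediate from absolute convergence, and termwise differentiation in operator norm (again justified by a geometric majorant) yields $\frac{d}{dt}e^{t\A}=\A e^{t\A}=e^{t\A}\A$. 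For uniqueness, given any uniformly continuous $\T$ solving the Cauchy problem, I would fix $t>0$ and consider $S(s):=\T(s)e^{(t-s)\A}$ on $[0,t]$; since $\A$ commutes with its own exponential, $S'(s)=\A\T(s)e^{(t-s)\A}-\T(s)\A e^{(t-s)\A}=0$, so $S(t)=S(0)$, i.e.\ $\T(t)=e^{t\A}$.

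For part (a), the main obstacle is producing a bounded generator from uniform continuity alone, since $a\,priori$ we have no pointwise differentiability. The standard device is to average $\T$ near $0$. Set
\[
V(h):=\int_0^h \T(s)\,ds\qquad (h>0),
\]
the Bochner (equivalently Riemann) integral of the continuous $\Lin(Y)$-valued function $\T$. Uniform continuity gives $\tfrac{1}{h}V(h)\to \Id$ in operator norm as $h\downarrow 0$, so for $h$ small enough $V(h)$ lies in a neighborhood of $\Id$ in $\Lin(Y)$ and is therefore invertible with $V(h)^{-1}\in\Lin(Y)$. The semigroup law gives
\[
\T(t)\,V(h)=\int_0^h\T(t+s)\,ds=\int_t^{t+h}\T(u)\,du=V(t+h)-V(t),
\]
whence
\[
\T(t)=\bigl(V(t+h)-V(t)\bigr)V(h)^{-1}.
\]
The right-hand side is differentiable in $t$ by the fundamental theorem of calculus for continuous Banach-space-valued integrands, so $\T$ is of class $C^1$ on $[0,\infty)$. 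Differentiating at $t=0$ produces
\[
\A:=\T'(0)=\bigl(\T(h)-\Id\bigr)V(h)^{-1}\in\Lin(Y),
\]
and differentiating at a general $t$, or combining with the semigroup identity $\T(t+s)=\T(t)\T(s)$, yields $\T'(t)=\A\T(t)=\T(t)\A$, so \eqref{DE-unif. cont. case} holds. Finally, since $\T$ solves the same Cauchy problem as $e^{t\A}$ and uniqueness was established in (b), we conclude $\T(t)=e^{t\A}$.

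The only nontrivial points, beyond standard series manipulations, are the fundamental theorem of calculus and the invertibility of operators near $\Id$ — both of which are routine in the Banach algebra $\Lin(Y)$. I expect no conceptual difficulty, but the cleanest write-up is the integration trick used to extract $\A$, so I would present that computation in full and leave the power-series manipulations in (b) as standard.
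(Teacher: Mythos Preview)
The paper does not prove this theorem: it is stated in Section~\ref{S:classical generation theorems} as a classical result and the proof is simply referred to \cite[Section 1.1]{Pa}. Your proposal is precisely the standard textbook argument found there (power-series exponential for (b), the averaging trick $V(h)=\int_0^h\T(s)\,ds$ for (a)), so in that sense it matches what the paper invokes.

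One small slip: in your uniqueness argument you set $S(s)=\T(s)e^{(t-s)\A}$ and claim $S'(s)=\A\T(s)e^{(t-s)\A}-\T(s)\A e^{(t-s)\A}=0$ because ``$\A$ commutes with its own exponential''. That commutation gives $\A e^{(t-s)\A}=e^{(t-s)\A}\A$, not $\A\T(s)=\T(s)\A$, which is what you would actually need. The fix is immediate: take instead $S(s)=e^{(t-s)\A}\T(s)$, so that
\[
S'(s)=-\A e^{(t-s)\A}\T(s)+e^{(t-s)\A}\A\T(s)=-e^{(t-s)\A}\A\T(s)+e^{(t-s)\A}\A\T(s)=0,
\]
using only that $\A$ commutes with $e^{(t-s)\A}$.
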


Uniform continuity is a too strong condition for applications, but uniformly continuous semigroups are needed in order to deal with the most natural case of strongly continuous semigroups. We start by recalling the notion of generator. 

\begin{Def}
Let $\T : \clsxint{0,\infty} \funzione \Lin(Y)$ be a strongly continuous semigroup. The \emph{generator of $\T$} is the linear operator 
$\A : D(\A) \funzione Y$ defined by
\begin{align}
  & D(\A) := \big\{y \in Y \, : \, \exists \lim_{h \to 0} (1/h)(\T(h)y - y) = (\de/\de t) \T(t)y\big|_{t=0}\big\}, \\
  & \A y := \lim_{h \to 0}\frac{1}{h}(\T(h)y - y), \qquad \forall y \in D(\A).
\end{align}
\end{Def}

Now we can state the so-called \emph{Generation Theorem}.

\begin{Thm}[Feller, Miyadera, Phillips]\label{T:generation thm}
The following assertions hold.
\begin{itemize}
\item[$(\mr{a})$]
Let $\A:D(\A) \lra Y$ be a linear operator with $D(\A)$  dense in $Y$. Assume that there are constants $M \in \opint{1,\infty}$ and $w \in \erre$ such that $\opint{w,\infty} \subseteq \rho(\A)$ and
  \begin{equation}\label{resolvent cond. for real semigroups}
    \norma{\R_\lambda(\A)^n}{} \le \frac{M}{(\lambda - w)^{n}} \qquad \forall n \in \enne, \quad \forall \lambda > w.
  \end{equation}
  Then $\A$ is the generator of the strongly continuous semigroup $\T : \clsxint{0,\infty} \funzione \Lin(Y)$ defined by 
  \begin{equation}
     \T(t)y = \lim_{n \to \infty} e^{t\A_{n}}y, \quad y \in Y, \qquad
     \text{where } \A_{n} := n\A\R_n(\A) 
     \in \Lin(Y).
  \end{equation}
  Moreover, $\norma{\T(t)}{} \le Me^{wt}$ for all $t \ge 0$.
\item[$(\mr{b})$]
  Let $\T : \clsxint{0,\infty} \funzione \Lin(Y)$ be a strongly continuous semigroup. Then there are constants $M \in \opint{1,\infty}$ and $w \in \erre$ such that $\norma{\T(t)}{} \le Me^{tw}$ for every $t \ge 0$. Moreover, the generator $\A$ of $\T$ is closed, $D(\A)$ is dense, 
  $\opint{w,\infty} \subseteq \rho(A)$ and 
  $\norma{\R_\lambda(\A)^n}{} \le \frac{M}{(\lambda - w)^{n}}$ for all $n \in \enne$ and for all $\lambda > w$.
\end{itemize}
In both cases $(\mr{a})$ and $(\mr{b})$, we have that
\begin{equation}\label{R = trasf laplace}
  \R_\lambda(\A)y = \int_{0}^{\infty}e^{-t\lambda} \T(t)y \de t \qquad \forall \lambda > w, \quad \forall y \in Y.
\end{equation}
\end{Thm}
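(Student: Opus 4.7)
The plan is to treat the two implications separately, using the Yosida approximation in the direction from resolvent estimate to semigroup, and the Laplace transform in the converse direction, with the integral formula \eqref{R = trasf laplace} serving as the bridge that appears in both arguments.

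For the harder direction $(\mr{a})$, my plan is to construct $\T(t)$ as the limit of bounded semigroups. First I would introduce the \emph{Yosida approximations} $\A_n := n \A \R_n(\A) = n^2 \R_n(\A) - n \Id$ for $n$ large enough that $n > w$, which belong to $\Lin(Y)$ by the assumption $\opint{w,\infty} \subseteq \rho(\A)$. A preliminary step is to show that $n \R_n(\A) y \to y$ for every $y \in Y$: on $D(\A)$ this follows from the identity $n\R_n(\A)y - y = \R_n(\A) \A y$ together with the bound $\|\R_n(\A)\| \le M/(n-w)$ (the $n=1$ case of the assumption), and one extends by density using the uniform bound on $\|n\R_n(\A)\|$. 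Consequently $\A_n y \to \A y$ for $y \in D(\A)$. Next I would set $\T_n(t) := e^{t \A_n}$ via Theorem \ref{T:generation thm for unif cont semigroups}, write $\T_n(t) = e^{-nt} \sum_{k \ge 0} \frac{(n^2 t)^k}{k!} \R_n(\A)^k$ and apply the power estimate \eqref{resolvent cond. for real semigroups} termwise to obtain a bound of the form $\|\T_n(t)\| \le M e^{nwt/(n-w)}$, i.e.\ a uniform growth bound on compact $t$-intervals. The key commutation $\A_n \A_m = \A_m \A_n$ (both are polynomials in resolvents, which commute by the resolvent equation \eqref{resolvent equation}) lets me write $\T_n(t) y - \T_m(t)y = \int_0^t \frac{d}{ds}\bigl(\T_m(t-s) \T_n(s) y\bigr)\de s = \int_0^t \T_m(t-s)\T_n(s)(\A_n - \A_m)y \de s$, and the uniform bound together with $\A_n y \to \A y$ on $D(\A)$ forces $(\T_n(t)y)$ to be Cauchy, uniformly on compact intervals, for $y \in D(\A)$; density and uniform boundedness extend this to all $y \in Y$. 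I then define $\T(t)y := \lim_n \T_n(t) y$, and the semigroup property, strong continuity and bound $\|\T(t)\| \le M e^{wt}$ pass to the limit. Finally, identifying the generator: for $y \in D(\A)$, passing to the limit in $\T_n(t) y - y = \int_0^t \T_n(s) \A_n y \de s$ shows $\A \subseteq \tilde\A$ where $\tilde\A$ is the generator of $\T$; the reverse inclusion follows from the fact that both $\lambda - \A$ and $\lambda - \tilde \A$ are bijective for $\lambda > w$ (for $\tilde \A$ this uses the Laplace transform representation \eqref{R = trasf laplace}, proved in step (b) below).

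For $(\mr{b})$, given a strongly continuous $\T$, I would first prove the exponential growth bound: uniform boundedness applied to $\{\T(t) : t \in \clint{0,1}\}$ (using strong continuity and Banach--Steinhaus) gives $\sup_{t \in \clint{0,1}} \|\T(t)\| = M < \infty$, and writing any $t = n + s$ with $s \in \clsxint{0,1}$ and using the semigroup law yields $\|\T(t)\| \le M^{n+1} \le M e^{wt}$ for $w = \log M$. Density of $D(\A)$ is standard: for any $y \in Y$ and $\eps > 0$, the element $y_\eps := \frac{1}{\eps} \int_0^\eps \T(s) y \de s$ lies in $D(\A)$ (a direct computation shows $\frac{1}{h}(\T(h) y_\eps - y_\eps) \to \frac{1}{\eps}(\T(\eps) y - y)$) and $y_\eps \to y$ by strong continuity. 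Closedness of $\A$ follows routinely from passing to the limit in $\T(h) y_n - y_n = \int_0^h \T(s) \A y_n \de s$. For the resolvent and its estimate, I would define $R(\lambda) y := \int_0^\infty e^{-\lambda t} \T(t) y \de t$ for $\lambda > w$ (absolute convergence by the growth bound) and verify directly that $R(\lambda)(\lambda - \A) = \Id$ on $D(\A)$ and $(\lambda - \A) R(\lambda) = \Id$ on $Y$, proving $\opint{w,\infty} \subseteq \rho(\A)$ and \eqref{R = trasf laplace}. Iterating $R(\lambda)^n = \frac{1}{(n-1)!} \int_0^\infty t^{n-1} e^{-\lambda t} \T(t) y \de t$ (which follows inductively using $R(\lambda)^{n} = -\frac{1}{(n-1)!}\frac{d^{n-1}}{d\lambda^{n-1}} R(\lambda)$ or by Fubini) and estimating by $M e^{wt}$ gives the bound $\|R(\lambda)^n\| \le M/(\lambda - w)^n$.

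I expect the main obstacle to be in part $(\mr{a})$, specifically the passage from the abstract power estimate \eqref{resolvent cond. for real semigroups} to a genuine uniform bound on $\|\T_n(t)\|$. The naive bound $\|\T_n(t)\| \le e^{t\|\A_n\|}$ is useless since $\|\A_n\|$ grows with $n$; one must exploit the series expansion of $e^{t\A_n}$ together with the \emph{same} hypothesis applied to \emph{all} powers of $\R_n(\A)$ simultaneously, and this is precisely where the full Miyadera--Phillips estimate (not merely Hille--Yosida's $M=1$ estimate) is indispensable. Once this uniform bound is in hand, the Cauchy argument for $\T_n(t)y$ and the identification of the generator proceed by the same commutation/integration trick.
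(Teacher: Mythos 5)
The paper does not prove this classical theorem: it defers explicitly to Pazy \cite[Section 1.5]{Pa}, emphasizing that there the statement is reduced, via an equivalent renorming of $Y$, to the contraction case $M=1$, $w=0$ (Hille--Yosida, Theorem \ref{T:generation thm, contraction case}). Your sketch is a correct and complete alternative: you carry the full Feller--Miyadera--Phillips power estimate \eqref{resolvent cond. for real semigroups} directly into the exponential series $\T_n(t)=e^{-nt}\sum_{k\ge 0}\frac{(n^2t)^k}{k!}\R_n(\A)^k$, obtaining $\|\T_n(t)\|\le Me^{nwt/(n-w)}$ without ever changing the norm, and then run the standard Yosida-approximation limit argument (strong convergence on $D(\A)$ via the commutation identity and density extension, generator identification via injectivity and surjectivity of $\lambda-\A$ and $\lambda-\tilde\A$, Laplace transform for part (b)). The trade-off is mild: the renorming route in Pazy lets one invoke the simpler Hille--Yosida argument as a black box and reuse it, whereas your direct route keeps the original norm throughout (so the constant $M$ is visible at every stage) at the cost of a slightly heavier termwise estimate. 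Both are standard and correct; I found no gap. One small remark: when you assert that $\lambda-\tilde\A$ is bijective for $\lambda>w$, you should make sure you establish it from part (b) applied to the semigroup $\T$ you just constructed (with the growth bound $\|\T(t)\|\le Me^{wt}$), which is what you intend, since \eqref{R = trasf laplace} is the bridge; stating this explicitly closes the circle without logical circularity because part (b) uses only the already-constructed $\T$, not its generator.
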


For the proof of Theorem \ref{T:generation thm}, we refer to \cite[Section 1.5]{Pa}, where it is shown that the proof can be reduced to the important particular case in which $M = 1$ and $w = 0$. In such a situation, $\T$ is called a strongly continuous \emph{contraction} semigroup, and, in order to prove that $\A$ generates a semigroup, it is enough to verify \eqref{resolvent cond. for real semigroups} for $n = 1$ only. Indeed, we have the following

\begin{Thm}[Hille, Yosida]\label{T:generation thm, contraction case}
The following assertions hold.
\begin{itemize}
\item[$(\mr{a})$]
  Let $D(\A)$ be a dense vector subspace of $Y$ and let $\A:D(\A) \funzione \mathrm{Y}$ be a linear operator. Assume that $\opint{0,\infty} \subseteq \rho(\A)$ and that 
  \begin{equation}
    \norma{\R_\lambda(\A)}{} \le \frac{1}{\lambda} \qquad \forall \lambda > 0.
  \end{equation}
  Then $\A$ is the generator of a strongly continuous semigroup $\T : \clsxint{0,\infty} \funzione \Lin(Y)$ such that $\norma{\T(t)}{} \le 1$ for all $t \ge 0$.
\item[$(\mr{b})$]
  Let $\T : \clsxint{0,\infty} \funzione \Lin(Y)$ be a strongly continuous semigroup
  such that $\norma{\T(t)}{} \le 1$ for every $t \ge 0$. Then the generator $\A$ of $\T$ is closed, $D(\A)$ is dense, 
  $\opint{0,\infty} \subseteq \rho(A)$ and 
  $\norma{\R_\lambda(\A)}{} \le \frac{1}{\lambda}$ for all $\lambda > 0$.
\end{itemize}
\end{Thm}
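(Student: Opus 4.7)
The plan is to derive the result via the classical Yosida approximation scheme, which reduces both directions to the interplay between the resolvent $\R_\lambda(\A)$ and the Laplace transform of $\T$. For part $(\mr{a})$, the key device is the family of \emph{Yosida approximations}
\[
\A_\lambda := \lambda \A \R_\lambda(\A) = \lambda^2 \R_\lambda(\A) - \lambda \Id \in \Lin(Y), \qquad \lambda > 0,
\]
which are bounded and approximate $\A$. First I would show that $\lambda \R_\lambda(\A) y \to y$ as $\lambda \to \infty$ for every $y \in Y$: the identity $\lambda \R_\lambda(\A) y - y = \R_\lambda(\A) \A y$ together with $\norma{\R_\lambda(\A)}{} \le 1/\lambda$ gives the claim for $y \in D(\A)$, and density of $D(\A)$ plus the uniform bound $\norma{\lambda \R_\lambda(\A)}{} \le 1$ extends it to all of $Y$. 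As a consequence, $\A_\lambda y \to \A y$ for every $y \in D(\A)$.

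Since $\A_\lambda$ is bounded, Theorem \ref{T:generation thm for unif cont semigroups} yields the uniformly continuous semigroup $\T_\lambda(t) := e^{t\A_\lambda}$. Writing $\T_\lambda(t) = e^{-t\lambda} e^{t\lambda^2 \R_\lambda(\A)}$ and using the bound on $\R_\lambda(\A)$ gives $\norma{\T_\lambda(t)}{} \le 1$ for all $t,\lambda > 0$. Because $\A_\lambda$ and $\A_\mu$ are polynomials in commuting resolvents (by \eqref{resolvent equation}), they commute among themselves and with every $\T_\nu(t)$, and a standard integration-of-the-derivative argument applied to $s \longmapsto \T_\lambda(t-s) \T_\mu(s) y$ yields
\[
\norma{\T_\lambda(t) y - \T_\mu(t) y}{} \le t \, \norma{\A_\lambda y - \A_\mu y}{}.
\]
Hence $(\T_\lambda(t) y)_\lambda$ is Cauchy as $\lambda \to \infty$ for $y \in D(\A)$, uniformly on compact time intervals; the contraction bound together with density of $D(\A)$ extends the convergence to all of $Y$. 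One then defines $\T(t) y := \lim_{\lambda \to \infty} \T_\lambda(t) y$, verifies the semigroup property and strong continuity by passing to the limit, and checks that the generator $\A'$ of $\T$ extends $\A$; since $\lambda \in \rho(\A) \cap \rho(\A')$ for every $\lambda > 0$, one concludes $\A' = \A$.

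For part $(\mr{b})$, density of $D(\A)$ follows from $h^{-1}\int_0^h \T(s) y \, \de s \to y$ as $h \to 0^+$ with the left-hand side lying in $D(\A)$, while closedness of $\A$ is routine. To identify $\rho(\A)$ and bound the resolvent, I would introduce
\[
R(\lambda) y := \int_0^\infty e^{-t\lambda} \T(t) y \, \de t, \qquad \lambda > 0, \ y \in Y,
\]
for which the contraction bound guarantees both convergence and $\norma{R(\lambda)}{} \le 1/\lambda$. Computing $h^{-1}(\T(h) - \Id) R(\lambda) y$ and letting $h \to 0^+$ shows $R(\lambda) y \in D(\A)$ with $(\lambda \Id - \A) R(\lambda) = \Id$, and the identity $R(\lambda)(\lambda \Id - \A) = \Id$ on $D(\A)$ is verified similarly. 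Therefore $\lambda \in \rho(\A)$ with $\R_\lambda(\A) = R(\lambda)$, which simultaneously delivers $\opint{0,\infty} \subseteq \rho(\A)$, the resolvent bound and formula \eqref{R = trasf laplace}. The main obstacle is the Cauchy argument for $\T_\lambda(t) y$ in part $(\mr{a})$: it requires simultaneously the commutativity of the Yosida approximations, the uniform contraction bound, and the convergence of $\A_\lambda y$ on $D(\A)$, after which density finishes the job.
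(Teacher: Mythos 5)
Your argument is the standard Yosida-approximation proof of the Hille--Yosida theorem, and it is correct; the paper does not give its own proof here but simply cites \cite[Section 1.3]{Pa}, where precisely this approach (resolvent bound $\Rightarrow$ convergence of $\lambda\R_\lambda(\A)$, bounded approximants $\A_\lambda$, contraction estimate for $e^{t\A_\lambda}$, Cauchy-in-$\lambda$ limit, and Laplace-transform representation of the resolvent for the converse) is carried out. So you have, in effect, reconstructed the very proof the paper points to.
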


For the proof we refer again to \cite[Section 1.3]{Pa}. Finally, we state the result relating strongly continuous semigroups with the Cauchy problem
(see, e.g., \cite[Section 4.1]{Pa}).

\begin{Thm}\label{T:semigroups and diff eq}
Let $\A : D(\A) \funzione Y$ be a closed linear operator with  $D(\A)$ dense in $Y$. Then the following conditions are equivalent:
\begin{itemize}
\item[$(\mr{a})$]
  $\A$ is the generator of a strongly continuous semigroup $\T$.
\item[$(\mr{b})$]
  $\rho(\A) \cap \erre \neq \vuoto$ and, for every $y \in D(\A)$, there exists a unique $u \in C^1(\clsxint{0,\infty};Y)$ such that $u(t) \in D(\A)$ for all $t \geq 0$ and
  \begin{align}
    & u'(t) = \A u(t) \qquad \forall t \ge 0, \label{eq. diff. per A str. cont.}\\
    & u(0) = y. \label{in. cond. per A str. cont.}
  \end{align}
\end{itemize} 
In such cases, the unique solution of \eqref{eq. diff. per A str. cont.}--\eqref{in. cond. per A str. cont.} is $u(t) = \T(t)y$.
\end{Thm}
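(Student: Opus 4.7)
The plan is to establish the two implications separately; the direction $(\mr{a})\Rightarrow(\mr{b})$ is essentially immediate from the definition of generator, while $(\mr{b})\Rightarrow(\mr{a})$ requires constructing a candidate semigroup on $D(\A)$ and extending it to $Y$, the extension being the main obstacle.

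For $(\mr{a})\Rightarrow(\mr{b})$, Theorem \ref{T:generation thm} yields $\opint{w,\infty}\subseteq\rho(\A)$, so $\rho(\A)\cap\erre\neq\vuoto$. Setting $u(t):=\T(t)y$ for $y\in D(\A)$, strong continuity of $\T$ together with the identity $h^{-1}(\T(t+h)y-\T(t)y)=\T(t)h^{-1}(\T(h)y-y)$ gives $u\in C^1(\clsxint{0,\infty};Y)$, $u(t)\in D(\A)$, and $u'(t)=\A\T(t)y=\T(t)\A y$, with $u(0)=y$. For uniqueness, given any solution $v$, define $\varphi(s):=\T(t-s)v(s)$ on $\clint{0,t}$; the product rule combined with the two differential equations yields $\varphi'(s)=-\T(t-s)\A v(s)+\T(t-s)v'(s)=0$, so $\varphi$ is constant and $v(t)=\varphi(t)=\varphi(0)=\T(t)y=u(t)$.

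For $(\mr{b})\Rightarrow(\mr{a})$, fix $\lambda_0\in\rho(\A)\cap\erre$ and, for $y\in D(\A)$, let $u(\cdot;y)$ be the unique solution of the Cauchy problem; set $\T(t)y:=u(t;y)$. Linearity of $\A$ together with uniqueness makes each $\T(t):D(\A)\funzione D(\A)$ linear, and since $t\mapsto u(t+s;y)$ solves the Cauchy problem with initial datum $\T(s)y\in D(\A)$, uniqueness yields the semigroup property $\T(t+s)y=\T(t)\T(s)y$ on $D(\A)$.

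The main difficulty is showing each $\T(t)$ extends to a bounded operator on all of $Y$. Endow $D(\A)$ with the graph norm $\norma{y}{D(\A)}:=\norma{y}{}+\norma{\A y}{}$, which is Banach since $\A$ is closed. For fixed $T>0$, the linear map $\Phi:D(\A)\funzione C(\clint{0,T};Y)$, $\Phi(y):=u(\cdot;y)$, has closed graph (by uniqueness of solutions together with the closedness of $\A$), so by the closed graph theorem it is continuous, yielding $\sup_{t\in\clint{0,T}}\norma{\T(t)y}{}\leq C_T\norma{y}{D(\A)}$. To upgrade this to a bound in the $Y$-norm alone is delicate: one observes that $\R_{\lambda_0}(\A)$ commutes with $\T(t)$ on $D(\A)$ (applying uniqueness to $\R_{\lambda_0}(\A)u$) and that $\A\R_{\lambda_0}(\A)=\lambda_0\R_{\lambda_0}(\A)-\Id$ lies in $\Lin(Y)$; combined with a Yosida-type approximation of the identity on $D(\A)$ by bounded operators built from iterates of the resolvent, this yields the pointwise estimate $\norma{\T(t)y}{}\leq M_T\norma{y}{}$, and density of $D(\A)$ extends each $\T(t)$ to $\Lin(Y)$ preserving the semigroup law. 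Strong continuity is inherited on $D(\A)$ from $u\in C^1$ and propagates to $Y$ via density together with the uniform bound on compact intervals. Finally, if $\A'$ denotes the generator of the constructed $\T$, then $u'(0)=\A y$ forces $\A\subseteq\A'$; choosing $\lambda$ large enough so that $\lambda\in\rho(\A)\cap\rho(\A')$, the coincidence $(\lambda\Id-\A)=(\lambda\Id-\A')$ on $D(\A)$ yields $\R_\lambda(\A)=\R_\lambda(\A')$ on all of $Y$, whence $D(\A)=D(\A')$ and $\A=\A'$, and the unique solution of the Cauchy problem is $u(t)=\T(t)y$.
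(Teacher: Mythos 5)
The paper does not prove this theorem; it is a classical statement cited from Pazy \cite[Section 4.1]{Pa}. Your direction $(\mr{a})\Rightarrow(\mr{b})$ is correct. In $(\mr{b})\Rightarrow(\mr{a})$ the architecture is right (solution operators on $D(\A)$, semigroup law by uniqueness, commutation with $\R_{\lambda_0}(\A)$, then extension to $Y$), but the key extension step does not go through as you describe. You invoke a ``Yosida-type approximation of the identity on $D(\A)$ by bounded operators built from iterates of the resolvent''; however, the hypothesis only provides a single real point $\lambda_0\in\rho(\A)\cap\erre$, not a half-line $\opint{w,\infty}\subseteq\rho(\A)$, so the Yosida approximants $n\R_n(\A)$ are simply unavailable, and the powers $\R_{\lambda_0}(\A)^k$ of one fixed resolvent do not form an approximate identity — there is no smallness of $\norma{\R_{\lambda_0}(\A)}{}$ to exploit. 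Your closed-graph estimate only gives $\norma{\T(t)y}{}\le C_T\bigl(\norma{y}{}+\norma{\A y}{}\bigr)$, and this cannot be upgraded to $\norma{\T(t)y}{}\le M_T\norma{y}{}$ by that route. The same unstated assumption reappears in your final step (``choosing $\lambda$ large enough so that $\lambda\in\rho(\A)\cap\rho(\A')$''), which presupposes $\rho(\A)$ meets $\opint{w,\infty}\subseteq\rho(\A')$.

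Both gaps close with the single resolvent $\R_{\lambda_0}(\A)$ and without any Yosida scheme. Define $\widetilde{\T}(t)z:=(\lambda_0\Id-\A)\T(t)\R_{\lambda_0}(\A)z$ for $z\in Y$; this is well-defined because $\T(t)$ maps $D(\A)$ into $D(\A)$, and it agrees with $\T(t)$ on $D(\A)$ by the commutation you established. To see it is bounded, either strengthen the closed-graph argument so that $\Phi$ maps $D(\A)$ (graph norm) into $C\big(\clint{0,T};D(\A)\big)$ with the graph norm on the target — this is legitimate because $\A u(t;y)=u'(t;y)$ is continuous and $\A$ is closed — giving $\norma{\T(t)y}{}+\norma{\A\T(t)y}{}\le C_T\bigl(\norma{y}{}+\norma{\A y}{}\bigr)$, whence $\norma{\widetilde{\T}(t)z}{}\le M_T\norma{z}{}$; or note that $\widetilde{\T}(t)$ is the composition of the bounded map $\T(t)\R_{\lambda_0}(\A):Y\funzione D(\A)$ with the closed operator $\lambda_0\Id-\A$, hence an everywhere-defined closed operator and thus bounded by the closed graph theorem, and that $t\longmapsto\widetilde{\T}(t)z=\lambda_0 u(t;\R_{\lambda_0}(\A)z)-u'(t;\R_{\lambda_0}(\A)z)$ is continuous for each $z$, so the uniform boundedness principle yields $\sup_{\clint{0,T}}\norma{\widetilde{\T}(t)}{}<\infty$. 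For the identification of generators, show directly that $\lambda_0\in\rho(\A')$: by density $\R_{\lambda_0}(\A)\widetilde{\T}(t)=\widetilde{\T}(t)\R_{\lambda_0}(\A)$ on all of $Y$; differentiating at $t=0$ for $x\in D(\A')$ and using $\A\subseteq\A'$ gives $\R_{\lambda_0}(\A)\bigl(\lambda_0\Id-\A'\bigr)x=(\lambda_0\Id-\A)\R_{\lambda_0}(\A)x=x$, so $x$ lies in the range of $\R_{\lambda_0}(\A)$, i.e.\ $x\in D(\A)$; hence $D(\A')\subseteq D(\A)$ and $\A=\A'$.
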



\section{Generation theorems for semigroups in Banach $\alg$-bimodules}\label{S:str cont semigroups in A-modules}

In this section, we generalize the results contained in \cite{CoSa} by proving the generation theorems for semigroups
in Banach $\alg$-bimodules. Our proofs are different and simpler, since we reduce to the classical case thanks to Lemma \ref{L:L^r chiuso in L}.

We recall that $\alg$ is a real algebra satisfying \eqref{eq:assumption-1} and \eqref{eq:assumption-2}. We assume that
\[
  \text{\emph{$X$ is a Banach $\alg$-bimodule with norm $\norma{\cdot}{}$.}}
\]

\begin{Def}
A mapping $\T : \clsxint{0,\infty} \funzione \Lin^\r(X)$ is called \emph{(right linear operator) semigroup} if
\begin{align}
  & \T(t+s) = \T(t)\T(s) \qquad \forall t, s > 0, \\
  & \T(0) = \Id.
\end{align}
We also say that $\T$ is a \emph{(right linear operator) semigroup in $X$}. A semigroup $\T$ is called \emph{uniformly continuous} if $\T \in C(\clsxint{0,\infty};\Lin^\r(X))$. A semigroup $\T$ is called \emph{strongly continuous} if 
$\T(\cdot)x \in C(\clsxint{0,\infty};X)$ for every $x \in X$.
\end{Def}

\begin{Rem}
By Lemma \ref{L:continuita invariante}, we could also say that $\T$ is a uniformly continuous (resp. strongly continuous) semigroup in $X$ if $\T$ is $\Lin^\r(X)$-valued and $\T$ is a uniformly continuous (resp. strongly continuous) semigroup in 
$_\erre X$.
\end{Rem}

\begin{Thm}\label{T:A-generation thm for unif cont semigroups}
The following assertions hold.
\begin{itemize}
\item[$(\mr{a})$]
If $\T : \clsxint{0,\infty} \funzione \Lin^\r(X)$ is a uniformly continuous semigroup then $\T$ is differentiable and, setting 
  $\A := \T'(0) \in \Lin^\r(X)$, one has
\begin{align}
  & \T'(t) = \A\T(t) \qquad \forall t \ge 0, \label{A-DE-unif. cont. case} \\
  & \T(0) = \Id.
\end{align}
Moreover $\T(t) = e^{t\A}$ for all $t \ge 0$.
\item[$(\mr{b})$]
  If $\A \in \Lin^\r(X)$, the map $\T : \clsxint{0,\infty} \funzione \Lin^\r(X) : t \longmapsto e^{t\A}$ is a uniformly continuous semigroup and it is the unique solution of the Cauchy problem
\begin{align}
  & \T'(t) = \A \T(t) \qquad \forall t \ge 0, \\
  & \T(0) = \Id.
\end{align}  
\end{itemize}
\end{Thm}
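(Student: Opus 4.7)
The plan is to reduce both parts to the classical Theorem \ref{T:generation thm for unif cont semigroups} applied to the real Banach space $_\erre X$, using Lemma \ref{L:L^r chiuso in L} at the crucial step to ensure that right linearity is preserved under the relevant limits. The key observation, already encoded in \eqref{altra descrizione di Lr(V)}, is that $\Lin^\r(X)$ sits inside $\Lin(_\erre X)$ as a closed $\erre$-vector subspace (with respect to the uniform operator topology, by Lemma \ref{L:L^r chiuso in L}), so the whole classical theory of uniformly continuous semigroups on $_\erre X$ is available, and it suffices to track that we stay inside $\Lin^\r(X)$.

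For part $(\mr{a})$, I would start with a uniformly continuous semigroup $\T : \clsxint{0,\infty} \funzione \Lin^\r(X)$, which by Lemma \ref{L:continuita invariante} is in particular a uniformly continuous semigroup into $\Lin(_\erre X)$. Theorem \ref{T:generation thm for unif cont semigroups}$(\mr{a})$ then yields that $\T$ is differentiable, the $\erre$-linear operator $\A := \T'(0) \in \Lin(_\erre X)$ satisfies \eqref{A-DE-unif. cont. case} and $\T(0) = \Id$, and $\T(t) = e^{t\A}$ for every $t \ge 0$. The only new point is to show that $\A$ is actually right linear. But for each $h > 0$ the operator $\frac{1}{h}(\T(h) - \Id)$ lies in $\Lin^\r(X)$ (since $\Lin^\r(X)$ is an $\erre$-vector subspace containing $\Id$ and each $\T(h)$), and $\A$ is the limit of these operators in the uniform operator topology as $h \to 0^+$. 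Lemma \ref{L:L^r chiuso in L} therefore gives $\A \in \Lin^\r(X)$. The same argument shows $\T'(t) = \A\T(t) \in \Lin^\r(X)$ for each $t \geq 0$.

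For part $(\mr{b})$, starting from $\A \in \Lin^\r(X)$, I would first show that the operator $e^{t\A} := \sum_{n=0}^\infty (t\A)^n/n!$, which converges in $\Lin(_\erre X)$ by the classical theory, actually belongs to $\Lin^\r(X)$ for every $t \ge 0$. Each partial sum lies in $\Lin^\r(X)$ because by Lemma \ref{L:inverse is linear}$(\mr{i})$ the composition of right linear operators is right linear, and $\Lin^\r(X)$ is closed under $\erre$-linear combinations; Lemma \ref{L:L^r chiuso in L} then forces the limit $e^{t\A}$ into $\Lin^\r(X)$. Theorem \ref{T:generation thm for unif cont semigroups}$(\mr{b})$ applied in $\Lin(_\erre X)$ provides the semigroup property, the uniform continuity of $t \longmapsto e^{t\A}$ (valued in $\Lin^\r(X)$ by what we just showed, again invoking Lemma \ref{L:continuita invariante}), the Cauchy problem, and the uniqueness of the solution, since any other solution in $\Lin^\r(X)$ would be a solution in $\Lin(_\erre X)$.

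There is no serious obstacle here: the real content is in the classical theorem, and the alternative $*$-algebra framework contributes only the bookkeeping task of verifying that right linearity survives the limits used in differentiation and in the exponential series. This is precisely what Lemma \ref{L:L^r chiuso in L} is designed to handle, and it is the only ingredient needed beyond the classical result. This same reduction strategy is what the authors signal in the introduction will be used throughout Section \ref{S:str cont semigroups in A-modules}, and here it operates in its simplest form.
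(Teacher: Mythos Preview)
Your proposal is correct and follows exactly the paper's approach: reduce to the classical Theorem \ref{T:generation thm for unif cont semigroups} on $_\erre X$ via Lemma \ref{L:continuita invariante}, then invoke Lemma \ref{L:L^r chiuso in L} to ensure $\A=\T'(0)$ (in part $(\mr{a})$) and $e^{t\A}$ (in part $(\mr{b})$) lie in $\Lin^\r(X)$. Your write-up simply spells out in more detail the difference-quotient and partial-sum arguments that the paper leaves implicit.
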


\begin{proof}
By Lemma \ref{L:continuita invariante}, we have that $\T$ is also a uniformly continuous semigroup in $_{\erre}X$, therefore we can apply Theorem \ref{T:generation thm for unif cont semigroups}$(\mr{a})$. We conclude part (a) by invoking Lemma \ref{L:L^r chiuso in L}, which implies that $\A = \T'(0) \in \Lin^\r(X)$. Concerning $(\mr{b})$, we can apply 
Theorem \ref{T:generation thm for unif cont semigroups}$(\mr{b})$ to $\A \in \Lin^\r(X) \subseteq \Lin(_\erre X)$ and use again Lemma 
\ref{L:L^r chiuso in L} in order to obtain that $e^{t\A}$ belongs to $\Lin^\r(X)$ for every $t \ge 0$.
\end{proof}

The generator of a strongly continuous semigroup is defined as in the classical case.

\begin{Def}
Let $\T : \clsxint{0,\infty} \funzione \Lin^\r(X)$ be a strongly continuous semigroup. The \emph{generator of $\T$} is the right linear operator $\A : D(\A) \funzione X$ defined by
\begin{align}
  & D(\A) := \big\{x \in X\ :\ \exists \lim_{h \to 0} (1/h)(\T(h)x - x) = (\de/\de t) \T(t)x\big|_{t=0}\big\}, \\
  & \A x := \lim_{h \to 0}\frac{1}{h}(\T(h)x - x), \qquad x \in D(\A).
\end{align}
\end{Def}

We can now prove the generation theorem for semigroups in a Banach $\alg$-bimodule.

\begin{Thm}\label{T:A-generation thm}
The following assertions hold.
\begin{itemize}
\item[$(\mr{a})$]
Let $\A : D(\A) \funzione X$ be a closed right linear operator with $D(\A)$ dense in $X$. Assume that there are constants $M \in \opint{1,\infty}$ and $w \in \erre$ such that $\opint{w,\infty} \subseteq \rho_\s(\A)$ and
  \begin{equation}\label{cond on R^n-A}
    \norma{\C_\lambda(\A)^n}{} \le \frac{M}{(\lambda - w)^{n}} \qquad \forall n \in \enne, \quad \forall \lambda > w.
  \end{equation}
  Then $\A$ is the generator of the strongly continuous semigroup $\T : \clsxint{0,\infty} \funzione \Lin^\r(X)$ defined by 
  \begin{equation}
     \T(t)x = \lim_{n \to \infty} e^{t\A_{n}}x, \quad x \in X, \qquad \text{where } \A_{n} := n\A\C_n(\A) \in \Lin^\r(X).
  \end{equation}
  Moreover, $\norma{\T(t)}{} \le Me^{wt}$ for all $t \ge 0$.
\item[$(\mr{b})$]
 Let $\T : \clsxint{0,\infty} \funzione \Lin^\r(X)$ be a strongly continuous semigroup. Then there are constants $M \in \opint{1,\infty}$ and $w \in \erre$ such that $\norma{\T(t)}{} \le Me^{tw}$ for every $t \ge 0$. Moreover the generator $\A$ of $\T$ is closed, $D(\A)$ is dense, $\opint{w,\infty} \subseteq \rho_\s(A)$ and $\norma{\C_\lambda(\A)^n}{} \le \frac{M}{(\lambda - w)^{n}}$ for all $n \in \enne$ and for all $\lambda > w$.
\end{itemize}
In both cases $(\mr{a})$ and $(\mr{b})$, we have that
\begin{equation}\label{R = trasf laplace-A}
  \C_\lambda(\A)x = \int_{0}^{\infty}e^{-t\lambda} \T(t)x \de t \qquad \forall \lambda > w, \quad \forall x \in X.
\end{equation}
\end{Thm}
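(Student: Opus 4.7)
The overall plan is to reduce both directions to the classical Feller–Miyadera–Phillips theorem (Theorem \ref{T:generation thm}) applied to $\A$ and $\T$ viewed as objects on the underlying real Banach space $_\erre X$. Two bridges make this reduction transparent: the identification $\rho(\A)\cap\RR=\rho_\s(\A)\cap\RR$ together with $\C_\lambda(\A)=\R_\lambda(\A)$ for real $\lambda$ (Corollary after Lemma \ref{L:Calpha - AC = I}), and Lemma \ref{L:L^r chiuso in L} stating that $\Lin^\r(X)$ is closed in $\Lin(_\erre X)$ under pointwise, hence a fortiori uniform, operator convergence.

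For part $(\mr{a})$, the hypothesis $\opint{w,\infty}\subseteq\rho_\s(\A)$ with $\norma{\C_\lambda(\A)^n}{}\le M/(\lambda-w)^n$ translates verbatim, via the above identification, into $\opint{w,\infty}\subseteq\rho(\A)$ with $\norma{\R_\lambda(\A)^n}{}\le M/(\lambda-w)^n$. Applying Theorem \ref{T:generation thm}$(\mr{a})$ to $\A$ as a closed densely defined $\erre$-linear operator on $_\erre X$ produces a strongly continuous semigroup $\T:\clsxint{0,\infty}\to\Lin(_\erre X)$ with $\T(t)x=\lim_n e^{t\A_n}x$ and $\norma{\T(t)}{}\le Me^{wt}$, where $\A_n=n\A\R_n(\A)=n\A\C_n(\A)$. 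Since each $\A_n$ is a composition of elements of $\Lin^\r(X)$, Theorem \ref{T:A-generation thm for unif cont semigroups}$(\mr{b})$ gives $e^{t\A_n}\in\Lin^\r(X)$. Lemma \ref{L:L^r chiuso in L} applied to the pointwise-convergent sequence $(e^{t\A_n})_n$ then yields $\T(t)\in\Lin^\r(X)$. Because the generator is defined by the same pointwise limit in the two settings (the norm of $X$ and of $_\erre X$ coincide), $\A$ is the generator of $\T$ in the $\alg$-bimodule sense as well.

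For part $(\mr{b})$, Lemma \ref{L:continuita invariante} shows that a strongly continuous semigroup in $\Lin^\r(X)$ is also strongly continuous as a map into $\Lin(_\erre X)$. Theorem \ref{T:generation thm}$(\mr{b})$ then delivers constants $M,w$, closedness and dense domain of the generator $\A$, the inclusion $\opint{w,\infty}\subseteq\rho(\A)$, and the iterated-resolvent estimates. The only genuinely new check is that $\A$ is right linear: for $x\in D(\A)$ and $\alpha\in\alg$, right linearity of $\T(h)$ gives
\[
\frac{1}{h}\bigl(\T(h)(x\alpha)-x\alpha\bigr)=\Bigl(\frac{1}{h}\bigl(\T(h)x-x\bigr)\Bigr)\alpha,
\]
and the bound $\norma{v\alpha}{}\le|\alpha|\,\norma{v}{}$ from \eqref{pos omog su V} lets us pass to the limit, yielding $x\alpha\in D(\A)$ and $\A(x\alpha)=(\A x)\alpha$. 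In particular $D(\A)$ is a right $\alg$-submodule. The conclusion $\opint{w,\infty}\subseteq\rho_\s(\A)$ and the bound $\norma{\C_\lambda(\A)^n}{}\le M/(\lambda-w)^n$ then follow once more from the identification $\C_\lambda(\A)=\R_\lambda(\A)$ on $\rho(\A)\cap\RR$. Finally, the Laplace transform formula \eqref{R = trasf laplace-A} is obtained by combining the classical formula \eqref{R = trasf laplace} with the same identification on the real line.

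No step is a serious obstacle: the heart of the argument is the closure property recorded in Lemma \ref{L:L^r chiuso in L}, which is precisely what lets the classical theorem be transported without modification. It is worth emphasizing that associativity of $\alg$ plays no role — only the inequality $\norma{v\alpha}{}\le|\alpha|\,\norma{v}{}$ and the fact that right $\alg$-linearity is preserved by pointwise limits of $\erre$-linear maps are needed, which is exactly why this approach succeeds in the general alternative setting where the functional-calculus route used in the quaternionic literature is more delicate.
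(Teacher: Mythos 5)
Your proof is correct and follows the same route as the paper: reduce to the classical Feller--Miyadera--Phillips theorem on $_\erre X$ via the identifications $\rho_\s(\A)\cap\erre=\rho(\A)\cap\erre$ and $\C_\lambda(\A)=\R_\lambda(\A)$, then recover right-linearity by passing to pointwise limits using Lemma \ref{L:L^r chiuso in L}. Your explicit difference-quotient verification that the generator is right linear in part $(\mr{b})$ is in fact a bit cleaner than the paper's remark, which invokes Lemma \ref{L:L^r chiuso in L} and writes ``$\A\in\Lin^\r(X)$'' for the unbounded generator --- a small imprecision you tacitly repair.
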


\begin{proof}
Observe that $D(\A)$ is a real vector subspace of $_\RR X$ and $\A$ is $\erre$-linear. Moreover, $\A$ is closed and $D(\A)$ is dense in $X$. Since $\C_\lambda(\A) = \R_\lambda(\A)$ for real $\lambda$, thanks to \eqref{cond on R^n-A}, we can apply Theorem \ref{T:generation thm}(a) and infer that $\A$ is the generator of the strongly continuous semigroup 
$\T : \clsxint{0,\infty} \funzione \Lin(_{\erre}X)$ given by 
\begin{equation}
     \T(t) = \lim_{n \to \infty} e^{t\A_{n}} \quad
     \text{where } \A_{n} := n\A\R_n(\A) \in \Lin(_{\erre}X),
  \end{equation}
and $\norma{\T(t)}{X} = \norma{\T(t)}{_\erre X} \le Me^{wt}$ for all $t \geq 0$. As $n\Id - \A$ is right linear, by Lemma \ref{L:inverse is linear}, $\R_n(\A)$ is right linear. Thus $\A_{n}$ and $e^{t\A_{n}}$ are right linear as well. From Lemma \ref{L:L^r chiuso in L}, it follows that $\T(t)$ is right linear and (a) is proved. Concerning (b), as $\T$ is also a strongly continuous semigroup in $\Lin(_{\erre}X)$, we can apply Theorem \ref{T:generation thm}(b) and infer the thesis, indeed $\A \in \Lin^\r(X)$ because of Lemma \ref{L:L^r chiuso in L}. Formula 
\eqref{R = trasf laplace-A} obviously holds in both cases.
\end{proof}

Using similar arguments, Theorems \ref{T:generation thm, contraction case} and \ref{T:semigroups and diff eq} can be rephrased
in the new framework. We limit ourselves to state the relation with differential equations.

\begin{Thm}\label{T:semigroups and diff eq-A}
Let $\A : D(\A) \funzione X$ be a closed right linear operator with  $D(\A)$ dense in $X$. Then the following conditions are equivalent:
\begin{itemize}
\item[$(\mr{a})$]
  $\A$ is the generator of a strongly continuous semigroup $\T$.
\item[$(\mr{b})$]
  $\rho_s(\A) \cap \erre \neq \vuoto$ and, for every $x \in D(\A)$, there exists a unique $u \in C^1(\clsxint{0,\infty};X)$ such that
  \begin{align}
    & u'(t) = \A u(t) \qquad \forall t \ge 0, \label{eq. diff. per A str. cont.-A} \\
    & u(0) = x. \label{in. cond. per A str. cont.-A}
  \end{align}
\end{itemize} 
In such cases, the unique solution of \eqref{eq. diff. per A str. cont.-A}--\eqref{in. cond. per A str. cont.-A} is $u(t) = \T(t)x$.
\end{Thm}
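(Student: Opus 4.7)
The strategy is to reduce the whole statement to the classical real-Banach-space theorems of Section \ref{S:classical generation theorems} applied to $\A$ regarded as a closed $\erre$-linear operator on $_\erre X$. The bridge is the Corollary establishing $\rho(\A)\cap\erre=\rho_\s(\A)\cap\erre$ together with $\C_\lambda(\A)=\R_\lambda(\A)$ for real $\lambda$, which ensures that the classical resolvent on $_\erre X$ coincides with the spherical resolvent on the real line, and Lemma \ref{L:L^r chiuso in L}, which passes right linearity through pointwise limits.

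For $(\mr{a})\Rightarrow(\mr{b})$, I would set $u(t):=\T(t)x$ for $x\in D(\A)$. Theorem \ref{T:A-generation thm}(b) supplies some $w\in\erre$ with $\opint{w,\infty}\subseteq\rho_\s(\A)$, so in particular $\rho_\s(\A)\cap\erre\neq\vuoto$. The fact that $\T(t)$ preserves $D(\A)$, commutes with $\A$ on $D(\A)$, and that $u\in C^1(\clsxint{0,\infty};X)$ with $u'(t)=\A u(t)$ is entirely standard and follows from Theorem \ref{T:A-generation thm}(b) applied in $_\erre X$ via Lemma \ref{L:continuita invariante}. For uniqueness, given a second solution $v$, I would fix $t>0$ and put $\varphi(s):=\T(t-s)v(s)$ for $s\in[0,t]$; differentiating in $_\erre X$ yields $\varphi'(s)=-\A\T(t-s)v(s)+\T(t-s)\A v(s)=0$, so $\varphi$ is constant and $v(t)=\varphi(t)=\varphi(0)=\T(t)x=u(t)$.

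For $(\mr{b})\Rightarrow(\mr{a})$, I would regard $\A$ as a closed, densely defined $\erre$-linear operator on $_\erre X$. The hypothesis transfers verbatim: by the Corollary, $\rho_\s(\A)\cap\erre=\rho(\A)\cap\erre\neq\vuoto$, and by Lemma \ref{L:continuita invariante} the class $C^1(\clsxint{0,\infty};X)$ agrees with $C^1(\clsxint{0,\infty};{}_\erre X)$, so the Cauchy problem \eqref{eq. diff. per A str. cont.-A}--\eqref{in. cond. per A str. cont.-A} is uniquely solvable also in $_\erre X$. Applying the classical Theorem \ref{T:semigroups and diff eq}, $\A$ generates a strongly continuous semigroup $\T:\clsxint{0,\infty}\funzione\Lin(_\erre X)$. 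Theorem \ref{T:generation thm}(b) then yields constants $M,w$ with $\opint{w,\infty}\subseteq\rho(\A)$ and $\|\R_\lambda(\A)^n\|\le M/(\lambda-w)^n$; translating through the Corollary gives $\opint{w,\infty}\subseteq\rho_\s(\A)$ and $\|\C_\lambda(\A)^n\|\le M/(\lambda-w)^n$, so the hypotheses of Theorem \ref{T:A-generation thm}(a) are met.

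The main obstacle is the last half-sentence: showing that the semigroup produced by the classical theorem on $_\erre X$ actually takes values in $\Lin^\r(X)$, so that $\A$ is a generator in the sense of Section \ref{S:str cont semigroups in A-modules}. I would resolve this by invoking Theorem \ref{T:A-generation thm}(a) with the resolvent estimates just derived: it produces a strongly continuous semigroup $\widetilde{\T}$ in $\Lin^\r(X)$ whose generator is $\A$. Since a generator determines its semigroup uniquely already in the real-Banach-space setting, $\widetilde{\T}=\T$, so each $\T(t)$ lies in $\Lin^\r(X)$. Finally, as in part (a), the unique solution of \eqref{eq. diff. per A str. cont.-A}--\eqref{in. cond. per A str. cont.-A} coincides with $u(t)=\T(t)x$.
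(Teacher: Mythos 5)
Your proof is correct and follows essentially the route the paper intends: the paper does not spell out a proof of this theorem (it only remarks that Theorems \ref{T:generation thm, contraction case} and \ref{T:semigroups and diff eq} carry over ``using similar arguments''), and those similar arguments are precisely the reduction to $_\erre X$ via the Corollary $\rho(\A)\cap\erre=\rho_\s(\A)\cap\erre$, Lemma \ref{L:continuita invariante}, and Lemma \ref{L:L^r chiuso in L} that you use. The only cosmetic point is that in $(\mr{b})\Rightarrow(\mr{a})$ your loop through Theorem \ref{T:generation thm}(b) and Theorem \ref{T:A-generation thm}(a) to establish $\T(t)\in\Lin^\r(X)$ could be shortened: once the classical theorem hands you the semigroup on $_\erre X$, one can observe directly that $\T(t)x=\lim_{n\to\infty}e^{t\A_n}x$ with $\A_n=n\A\R_n(\A)$ right linear (by Lemma \ref{L:inverse is linear}), so $\T(t)$ is right linear by Lemma \ref{L:L^r chiuso in L} — but this is exactly what Theorem \ref{T:A-generation thm}(a) does internally, so invoking it as a black box as you do is equally valid.
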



\section{Spherical sectorial operators in Banach $\alg$-bimodules} \label{sec:spherical-sect-operators}

In this section, we study the analogous of the sectorial operators in the new framework of right linear operators acting on Banach $\alg$-bimodules. We will show that these kind of operators can be expressed as a line integral 
involving the exponential function. Therefore, it turns out that the associated semigroup is analytic in time.
We would like to draw the reader's attention that, for the study of these operators, we actually see the functional calculus at work, whereas in our proof of the generation theorems only the real structure of the Banach $\alg$-bimodules plays a role.

We assume that $\alg$ is a real algebra satisfying \eqref{eq:assumption-1} and \eqref{eq:assumption-2},
\begin{center}
\emph{$\alg$ is associative}
\end{center}
and
\begin{center}
\emph{$X$ is a Banach $\alg$-bimodule with norm $\norma{\cdot}{}$.}
\end{center}

\begin{Def}
Define the \emph{argument function} $\arg:Q_\alg \setminus \{0\} \lra \clint{0,\pi}$ on $Q_\alg$ as follows. If $\alpha \in Q_\alg \setminus \RR$, then there exist, and are unique, $\j \in \su_\alg$, $\rho \in \opint{0,\infty}$ and $\theta \in \opint{0,\pi}$ such that $\alpha=\rho e^{\theta\j} \in \mathbb{C}_\j$. Thus, we can define $\arg(\alpha):=\theta$. Moreover we set: $\arg(\alpha):=0$ if $\alpha \in \opint{0,\infty}$ and $\arg(\alpha):=\pi$ if $\alpha \in \opint{-\infty,0}$. 
\end{Def}

\begin{Def}
Let $\A : D(\A) \funzione X$ be a closed right linear operator with  $D(\A)$ dense in $X$ and let $\delta \in \opint{0,\pi/2}$. We say that $\A$ is a \emph{spherical $\delta$-sectorial operator} if 
\begin{equation}
\Sigma_\delta:=\big\{\alpha \in Q_\alg \setminus \{0\} \, : \, \arg(\alpha)<\pi/2 + \delta\big\} \subseteq \rho_\s(\A).
\end{equation}
If $\A$ is a spherical $\delta$-sectorial operator for some $\delta \in \opint{0,\pi/2}$, then we say that $\A$ is a \emph{spherical sectorial operator}.
\end{Def}

Let $\j \in \su_\alg$ and let $\Omega$ be an open subset of $\CC_\j$. By the term \textit{$C^1$-path of $\Omega$}, 
we mean a $C^1$-map $\gamma:\clint{a,b} \lra \Omega$ such that either $\gamma$ is injective or it is not, but $\gamma(t)=\gamma(s)$ only if $(t,s)=(a,b)$ or $(t,s)=(b,a)$. 
Denote by $-\gamma:\clint{a,b} \lra \Omega$ 
the $C^1$-path of $V$ obtained changing the orientation of $\gamma$; namely, $-\gamma(t):=\gamma(a+b-t)$. Given a continuous map $g:U \lra \CC_\j$, we define $\int_\gamma g(\alpha)|\de \alpha|:=\int_a^b g(\gamma(t))|\gamma'(t)| \de t$. Let 
$\gamma_1,\ldots,\gamma_n$ be $C^1$-paths of $\Omega$ 
and let $\Gamma=\gamma_1+\ldots+\gamma_n$ be the formal sum of the $\gamma_i$'s. As usual, we set $\int_\Gamma g(\alpha) \de \alpha:=\sum_{i=1}^n\int_{\gamma_i}g(\alpha) \de \alpha$, where $\int_{\gamma_i}g(\alpha) \de \alpha$ is the line integral of $g$ along $\gamma_i$. If $g$ is injective and of class $C^1$, then we define $g \circ \Gamma:=g \circ \gamma_1+\ldots+g \circ \gamma_n$. Let $\gamma'_1,\ldots,\gamma'_m$ be other $C^1$-paths of $\Omega$ and let $\Gamma':=\gamma'_1+\ldots+\gamma'_m$. In this situation, we denote by $\Gamma-\Gamma'=\gamma_1+\ldots+\gamma_n-\gamma'_1-\ldots-\gamma'_m$ the formal sum $\gamma_1+\ldots+\gamma_n+(-\gamma'_1)+\ldots+(-\gamma'_m)$ of $C^1$-paths of $\Omega$.

\begin{Def}\label{D:curva}
Let $\j \in \su_\alg$, let $r,s \in \opint{0,\infty}$ with $r<s$ and let $\eta,\xi \in \opint{0,2\pi}$ 
with $\eta<\xi$. Define the $C^1$-paths $\ray(\j \, ;r,s;\eta)$ and $\ce(\j \, ;r;\eta,\xi)$ of $\CC_\j$ by setting 
\begin{align}
& \ray(\j \, ;r,s;\eta)(t):=te^{\eta\j}, \qquad t \in \clint{r,s},\\
& \ce(\j \, ;r;\eta,\xi)(t):=re^{t\j}, \qquad t \in \clint{\eta,\xi}
\end{align}
and the $C^1$-maps $\ray^-(\j \, ;r;\eta):\cldxint{-\infty,-r} \lra \CC_\j$ and $\ray^+(\j \, ;r;\eta):\cldxint{r,\infty} \lra \CC_\j$ by setting
\begin{align}
& \ray^-(\j \, ;r;\eta)(t):=-te^{\eta\j}, \qquad t \in \cldxint{-\infty,-r},\\
& \ray^+(\j \, ;r;\eta)(t):=te^{\eta\j}, \qquad t \in \clsxint{r,\infty}.
\end{align}
Moreover, we denote by $\Gamma(\j \, ;r,s;\eta)$ and $\Gamma(\j \, ;r;\eta)$ the following formal sums of $C^1$-maps:
\begin{align}
&\Gamma(\j \, ;r,s;\eta):=-\ray(\j \, ;r,s;-\eta)+\ce(\j \, ;r;-\eta,\eta)+\ray(\j \, ;r,s;\eta),\\
&\Gamma(\j \, ;r;\eta):=\ray^-(\j \, ;r;-\eta)+\ce(\j \, ;r;-\eta,\eta)+\ray^+(\j \, ;r;\eta). \label{curva gamma}
\end{align}

Let $U$ be an open neighborhood in $\CC_\j$ of the union of the images of $\ray^-(\j \, ;r;-\eta)$, of $\ce(\j \, ;r;-\eta,\eta)$ and of 
$\ray^+(\j \, ;r;\eta)$, and let $\Psi:U \lra \Lin^\r(X)$ be a continuous map. Define
\begin{align}
& \int_{\ray^-(\j \, ;r;-\eta)}\Psi(\alpha) \de\alpha:=\lim_{h \rightarrow \infty}\int_{-\ray(\j \, ;r,h;-\eta)}\Psi(\alpha) \de\alpha \label{eq:-},\\
& \int_{\ray^+(\j \, ;r;\eta)}\Psi(\alpha) \de\alpha:=\lim_{h \rightarrow \infty}\int_{\ray(\j \, ;r,h;\eta)}\Psi(\alpha) \de\alpha \label{eq:+},
\end{align}
if such limits exist in $\Lin^\r(X)$. If both limits exist, we say that the integral $\int_{\Gamma(\j \, ;r;\eta)}\Psi(\alpha) \de\alpha$ converges in $\Lin^\r(X)$ and we set
\[
\int_{\Gamma(\j \, ;r;\eta)}\Psi(\alpha) \de\alpha:=
\int_{\ray^-(\j \, ;r;-\eta)}\Psi(\alpha) \de\alpha+
\int_{\ce(\j \, ;r;-\eta,\eta)}\Psi(\alpha) \de\alpha+
\int_{\ray^+(\j \, ;r;\eta)}\Psi(\alpha) \de\alpha.
\]
\end{Def}

\noindent In this situation, we have
\begin{equation} \label{eq:limit}
\int_{\Gamma(\j \, ;r;\eta)}\Psi(\alpha) \de\alpha=\lim_{h \rightarrow \infty}\int_{\Gamma(\j \, ;r,h;\eta)}\Psi(\alpha) \de\alpha.
\end{equation}

\begin{Lem}\label{L:indipendenza di U(t) dalla curva}
Let $D(\A)$ be a dense right $\alg$-submodule of $X$, let $\A : D(\A) \lra X$ be a spherical $\delta$-sectorial operator for some $\delta \in \opint{0,\pi/2}$, let $r \in \opint{0,\infty}$ and let $\eta \in \opint{\pi/2,\pi/2+\delta}$. Assume that there exists $M \in \opint{0,\infty}$ such that
\begin{equation}\label{stima per risolvente di A settoriale1}
\norma{\C_\alpha(\A)}{} \le \frac{M}{|\alpha|} \qquad \forall \alpha \in \Sigma_\delta.
\end{equation}
Then, for every $t>0$, the integral
\begin{equation}\label{espressione integrale per il risolvente1}
  \T(t):= \frac{1}{2\pi} \int_{\Gamma(\j \, ;r;\eta)} \C_\alpha(\A) \, \j^{-1} e^{t\alpha} \de \alpha
\end{equation}
converges in $\Lin^\r(X)$ and does not depend on the choices of $r$ and of $\eta$. Moreover, the corresponding map $\T:\opint{0,\infty} \lra \Lin^r(X)$ is differentiable and we have:
\[
\text{$\sup_{t>0}\norma{\T(t)}{} < \infty \;$ and $\; \frac{\de\ }{\de t}\T(t)x = \A \T(t)x \quad \forall t>0$.}
\]
\end{Lem}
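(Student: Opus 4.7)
The plan proceeds in four steps. First, for convergence of the integral defining $\T(t)$ in \eqref{espressione integrale per il risolvente1}, I split along $\Gamma(\j;r;\eta)$ into its arc and two ray pieces. The arc integral poses no difficulty, the integrand being continuous on a compact set. On each ray, parametrizing $\alpha=\tau e^{\pm\eta\j}$ for $\tau\in\clsxint{r,\infty}$ gives $|e^{t\alpha}|=e^{t\tau\cos\eta}$, with $\cos\eta<0$ because $\eta\in\opint{\pi/2,\pi/2+\delta}$; combined with $\norma{\C_\alpha(\A)}{}\le M/\tau$ this yields absolute convergence in $\Lin(_{\erre}X)$, and by Lemma \ref{L:L^r chiuso in L} the resulting $\T(t)$ lies in $\Lin^\r(X)$.

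Second, I would prove independence of $(r,\eta)$ using Corollary \ref{cor:inv-int-A}. Given two admissible parameter pairs, the truncations of the corresponding contours (for large $h$) can be closed by short circular arcs at radius $h$, producing the boundary of a bounded open subset of $\CC_\j$ whose closure lies in $\rho_\s(\A)\cap\CC_\j$. Since $\mr{exp}_t$ is a real slice regular function by \eqref{eq:exp}, that corollary forces the resulting closed-contour integral to vanish. The closing outer arcs contribute a quantity exponentially small in $h$ (thanks to the same $\cos\eta<0$), so letting $h\to\infty$ equates the two versions of $\T(t)$.

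The uniform bound $\sup_{t>0}\norma{\T(t)}{}<\infty$ then exploits this freedom: for each $t>0$ choose $r:=1/t$. The arc contribution is bounded by $\frac{1}{2\pi}\cdot\frac{M}{1/t}\cdot e\cdot(2\eta/t)=M\eta e/\pi$, independent of $t$, and each ray contribution, via the substitution $u:=t\tau|\cos\eta|$, reduces to $\frac{M}{2\pi}\int_{|\cos\eta|}^\infty e^{-u}/u\,\de u$, likewise independent of $t$.

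Finally, differentiability of $\T$ on $\opint{0,\infty}$ and the identity $\T'(t)x=\A\T(t)x$ follow from dominated-convergence differentiation under the integral sign: the differentiated integrand $\C_\alpha(\A)\j^{-1}\alpha e^{t\alpha}$ has norm $\le Me^{t\tau\cos\eta}$ on the rays, which furnishes an integrable dominant uniform on each compact subinterval of $\opint{0,\infty}$; this gives $\T'(t)=\frac{1}{2\pi}\int_\Gamma\C_\alpha(\A)\j^{-1}\alpha e^{t\alpha}\de\alpha$. Applied to $x\in X$, the scalar $\alpha$ commutes past $\j^{-1}e^{t\alpha}$ inside $\CC_\j$, and Lemma \ref{L:Calpha - AC = I} rewrites $\C_\alpha(\A)(x)\alpha=x+\A(\C_\alpha(\A)(x))$. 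This splits $\T'(t)x$ into the scalar factor $\bigl(\frac{1}{2\pi}\int_\Gamma\j^{-1}e^{t\alpha}\de\alpha\bigr)x$, which vanishes on closing $\Gamma$ to the left by a large arc (Cauchy for the entire function $e^{t\alpha}$ together with exponential decay), plus $\frac{1}{2\pi}\int_\Gamma\A(\C_\alpha(\A)(x))\j^{-1}e^{t\alpha}\de\alpha$, equal to $\A\T(t)x$ by closedness of $\A$ applied to approximating Riemann sums (using $\C_\alpha(\A)(x)\in D(\A)$). I expect the main obstacle to be the boundedness step, which cannot succeed on any single contour and essentially requires the prior contour-independence so as to permit the $t$-dependent choice $r=1/t$; the same arc-at-infinity idea recurs, in a slightly different form, to show $\int_\Gamma\j^{-1}e^{t\alpha}\de\alpha=0$.
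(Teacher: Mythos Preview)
Your proposal is correct and follows essentially the same route as the paper: absolute convergence from $\cos\eta<0$ and the resolvent bound; contour independence via Corollary~\ref{cor:inv-int-A} with closing arcs decaying exponentially; the uniform bound via the scale choice $r=1/t$; and the differential equation via differentiation under the integral, the operator identity $\C_\alpha(\A)\alpha=\Id+\A\C_\alpha(\A)$, the vanishing of $\int_\Gamma e^{t\alpha}\de\alpha$, and closedness of $\A$.

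One caution on the last step: Lemma~\ref{L:Calpha - AC = I} is an operator identity in which, by convention \eqref{funzione per scalare a dx}, $(\C_\alpha(\A)\alpha)(u)=\C_\alpha(\A)(\alpha u)$ with the scalar acting on the \emph{left} of $u$; your pointwise rewriting ``$\C_\alpha(\A)(x)\alpha=x+\A(\C_\alpha(\A)(x))$'' places the scalar on the wrong side and is not what the lemma says. This matters because, by \eqref{formula per bochner line int}, applying the operator-valued integral to $x$ yields $\int\C_\alpha(\A)(\beta x)$ with $\beta=\j^{-1}e^{t\alpha}\gamma'$ on the left of $x$, and the Appendix explicitly warns that this differs in general from $\int(\C_\alpha(\A)x)\beta$. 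Keep the identity at the operator level, as the paper does, and the argument goes through unchanged.
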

\begin{proof}
Fix $t \in \opint{0,\infty}$. First, we observe that
\begin{align}
\T(t)&=\frac{1}{2\pi} \int_{-\eta}^\eta \C_{r e^{\theta \j}}(\A) r e^{\theta \j} e^{tr e^{\theta \j}}\de \theta \notag\\
&\sp-\frac{1}{2\pi}\int_{-\infty}^{-r} \C_{-\rho e^{-\eta \j}}(\A) \, \j^{-1} e^{-\eta \j} e^{-t\rho e^{-\eta \j}}\de \rho+\frac{1}{2\pi} \int_r^{\infty} \C_{\rho e^{\eta \j}}(\A) \, \j^{-1} e^{\eta \j}e^{t\rho e^{\eta \j}}\de \rho. \label{eq:|T|}
\end{align}
By using \eqref{stima per risolvente di A settoriale1}, we obtain that
\begin{align}\label{first estimate for U}
\norma{\T(t)}{} \le \frac{M}{2\pi} \int_{-\eta}^\eta e^{t r \cos(\theta)} \de \theta + \frac{M}{\pi} \int_r^\infty \frac{e^{t \rho \cos(\eta)}}{\rho} \de \rho.
\end{align}
In particular, it follows that the last two integrals of \eqref{eq:|T|} are absolutely convergent. 

Let us prove that the integral in \eqref{espressione integrale per il risolvente1} does not depend on $r$ and on $\eta$. Let $s \in \opint{0,\infty}$ and let $\xi \in \opint{\pi/2,\pi/2+\delta}$. We must show that
\[
\frac{1}{2\pi} \int_{\Gamma(\j \, ;r;\eta)} \C_\alpha(\A) \, \j^{-1} e^{t\alpha} \de \alpha=
\frac{1}{2\pi} \int_{\Gamma(\j \, ;s;\xi)} \C_\alpha(\A) \, \j^{-1} e^{t\alpha} \de \alpha.
\]
Up to interchange $r$ with $s$, we may suppose that $r \leq s$. There are four cases to consider: $r<s$ and $\eta>\xi$, $r<s$ and $\eta=\xi$, $r<s$ and $\eta<\xi$, $r=s$ and $\eta>\xi$. We treat only the first. The proof of the other cases is similar. Hence suppose $r<s$ and $\eta>\xi$. Choose $h>s$ and define the open subset $U$ of $\CC_\j$ by setting
\[
U:=\big\{\rho e^{\theta\j} \in \CC_j \, : \, (\rho,\theta) \in \big(\opint{r,s} \times \opint{-\eta,\eta}\big) \cup \big(\opint{r,h} \times \opint{\xi,\eta}\big)\big\}.
\] 
Evidently, the closure of $U$ is contained in $\Sigma_\delta \cap \CC_\j$ (and hence in $\rho_\s(\A) \cap \CC_\j$) and the boundary of $U$ can be decomposed as the following formal sum of $C^1$-paths of $\rho_\s(\A) \cap \CC_\j \,$: 
\[
\Gamma(\j \, ;s,h;\xi)+\ce(\j \, ;h;\xi,\eta)-\Gamma(\j \, ;r,h;\eta)+\ce(\j \, ;h;-\eta,-\xi).
\]
By Corollary \ref{cor:inv-int-A}, we have that
\begin{align*}
&\frac{1}{2\pi} \int_{\Gamma(\j \, ;r,h;\eta)} \C_\alpha(\A) \, \j^{-1} e^{t\alpha} \de \alpha-
\frac{1}{2\pi} \int_{\Gamma(\j \, ;s,h;\xi)} \C_\alpha(\A) \, \j^{-1} e^{t\alpha} \de \alpha\\
&=\frac{1}{2\pi} \int_{\ce(\j \, ;h;\xi,\eta)+\ce(\j \, ;h;-\eta,-\xi)} \C_\alpha(\A) \, \j^{-1} e^{t\alpha} \de \alpha.
\end{align*}
In this way, since
\begin{align*}
\left\| \int_{\ce(\j \, ;h;\xi,\eta)+\ce(\j \, ;h;-\eta,-\xi)}\C_\alpha(\A) \j^{-1} \de \alpha e^{t\alpha} \right\| 
&=\left\|\int_{\clint{-\eta,-\xi} \cup \clint{\xi,\eta}} \C_{h e^{-\theta\j}}(\A) \, \j^{-1} \j \, he^{-\theta \j} e^{the^{-\theta\j}}\de \theta \right\| \\
&\leq 2\int_\xi^\eta Me^{th\cos(\xi)} \de\theta \to 0 \quad \text{as $h \to \infty$},
\end{align*}
we infer that
\begin{align*}
&\frac{1}{2\pi} \int_{\Gamma(\j \, ;r;\eta)} \C_\alpha(\A) \, \j^{-1} e^{t\alpha} \de \alpha=
\lim_{h \rightarrow \infty}\frac{1}{2\pi} \int_{\Gamma(\j \, ;r,h;\eta)} \C_\alpha(\A) \, \j^{-1} e^{t\alpha} \de \alpha\\
&=\lim_{h \rightarrow \infty}\frac{1}{2\pi} \int_{\Gamma(\j \, ;s,h;\xi)} \C_\alpha(\A) \, \j^{-1} e^{t\alpha} \de \alpha=\frac{1}{2\pi} \int_{\Gamma(\j \, ;s;\xi)} \C_\alpha(\A) \, \j^{-1} e^{t\alpha} \de \alpha,
\end{align*}
as desired. Thanks to the independence of integral in  \eqref{espressione integrale per il risolvente1} from $r$ and $\eta$ just proved, we can choose $r=1/t$ and $\eta = \delta/2$ in \eqref{first estimate for U}, obtaining the following uniform estimate in $t>0$:
\begin{align}
\norma{\T(t)}{} & \le \frac{M}{2\pi}\int_{-\eta}^\eta e^{\cos(\theta)} \de \theta+\frac{M}{\pi} \int_{1/t}^{\infty} \frac{e^{t\rho\cos(\delta/2)}}{\rho}\de \rho \notag \\
&=\frac{M}{2\pi}\int_{-\eta}^\eta e^{\cos(\theta)} \de \theta+\frac{M}{2\pi}\int_{-\cos(\eta/2)}^{\infty} \frac{e^{-r}}{r} \de r. \notag
\end{align}

Let us complete the proof by showing that $\frac{\de\ }{\de t}\T(t)x=\A\T(t)x$ if $t>0$. Let us observe that
\begin{equation}\label{int e^talpha = 0}
\int_{\Gamma(\j \,;r;\eta)} e^{t\alpha} \de \alpha=0.
\end{equation}
In fact, using the classical Cauchy formula, we have that
\[
\left\|\int_{\Gamma(\j \,;r,h;\eta)} e^{t\alpha} \de \alpha\right\|=\left\|-\int_{\ce(\j \,;h;\eta,2\pi-\eta)} e^{t\alpha} \de \alpha\right\| \leq 2(\pi-\eta)e^{th\cos(\eta)}h \to 0 \quad \text{as $h \to \infty$}.
\]
Moreover let us observe that $\int_{\Gamma(\j \, ;r;\eta)} \C_\alpha(\A) \alpha \, \j^{-1} e^{t\alpha} \de \alpha$ is absolutely convergent
and $\A\C_\alpha(\A) = \C_\alpha(\A)\alpha -  \Id$, by Lemma \ref{L:Calpha - AC = I}, hence from \eqref{int e^talpha = 0} we infer that 
$\int_{\Gamma(\j \, ;r;\eta)} \A\C_\alpha(\A) \, \j^{-1} e^{t\alpha} \de \alpha$ is also convergent. Therefore differentiating under the integral sign, using \eqref{P:Calpha - AC = I}, the closedness of $\A$, and taking into account 
the definition of line integral given in Appendix-Section \ref{S:appendix} (see formula \eqref{formula per bochner line int}), we get
\begin{align}
\frac{\de\ }{\de t}\T(t)x
&=\frac{1}{2\pi} \left(\int_{\Gamma(\j \, ;r;\eta)} \C_\alpha(\A) \alpha \, \j^{-1} e^{t\alpha} \de \alpha\right) x \notag \\
&=\frac{1}{2\pi} \left(\int_{\Gamma(\j \, ;r;\eta)} \A\C_\alpha(\A) \, \j^{-1} e^{t\alpha} \de \alpha\right) x+  
  \frac{1}{2\pi} \, \j^{-1}\int_{\Gamma(\j \,;r;\eta)}  e^{t\alpha}\de \alpha \ x \notag \\ 
&=\A  \left(\frac{1}{2\pi} \int_{\Gamma(\j \,;r;\eta)} \C_\alpha(\A) \, \j^{-1} e^{t\alpha} \de \alpha\right) x=\A\T(t)x. \notag
\end{align}
\end{proof}

Let us observe that whereas the first part of the proof of the preceding lemma is an adaptation of a classical argument to our general setting, we did not prove that $\T(t)$ is a right linear operator semigroup, indeed the arguments based on Fubini theorem or on the inversion of the Laplace transform do not work in the noncommutative setting. Therefore we have to follow a different procedure and we will deduce that $\T(t)$ is a semigroup from the fact that $\T(t)x$ solves the first order differential equation
$u'(t) = \A u(t)$. 

\begin{Lem}\label{Clambda come integrale}
Let $D(\A)$ be a dense right $\alg$-submodule of $X$, let $\A : D(\A) \lra X$ be a spherical $\delta$-sectorial operator for some $\delta \in \opint{0,\pi/2}$, let $r \in \opint{0,\infty}$ and let $\eta \in \opint{\pi/2,\pi/2+\delta}$. Then we have
\begin{equation}\label{espressione integrale per il risolvente}
\C_\lambda(\A) = \frac{1}{2\pi} \int_{\Gamma(\j \,;r;\eta)} \C_\alpha(\A) \, \j^{-1} (\lambda - \alpha)^{-1}\de \alpha \qquad \forall \lambda>r.
\end{equation}
\end{Lem}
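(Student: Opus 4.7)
The plan is to obtain the formula by a residue-type application of Corollary \ref{cor:inv-int-A} to the real slice regular function $C_\lambda(\cdot)$. Because $\lambda\in\RR$, its restriction to $\CC_\j\setminus\{\lambda\}$ coincides with $\alpha\longmapsto(\lambda-\alpha)^{-1}$ (indeed $\Delta_\lambda(\alpha)=(\alpha-\lambda)^2$ and $\lambda^c=\lambda$), and the simple pole $\alpha=\lambda$ lies on the positive real axis, strictly inside the sector enclosed by $\Gamma(\j\,;r;\eta)$. So one expects $\C_\lambda(\A)$ to emerge as the residue.

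First I would truncate. Fix $h>\lambda$ and choose $\epsilon>0$ so small that $\overline{B_\epsilon(\lambda)}$ lies inside the bounded sector
\[
S_h:=\{\rho e^{\theta\j}\in\CC_\j \, : \, r<\rho<h,\; -\eta<\theta<\eta\},
\]
and set $U_h := S_h\setminus\overline{B_\epsilon(\lambda)}$. Since $\eta<\pi/2+\delta$, the closure of $U_h$ is contained in $(\Sigma_\delta\setminus\{\lambda\})\cap\CC_\j\subseteq\rho_\s(\A)\cap\CC_\j$, so Corollary \ref{cor:inv-int-A} applies with $f(\alpha)=(\lambda-\alpha)^{-1}$ and $U=U_h$. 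Careful bookkeeping of orientations gives the positively oriented boundary
\[
\partial U_h = -\Gamma(\j\,;r,h;\eta) + \ce(\j\,;h;-\eta,\eta) - \gamma_\epsilon,
\]
where $\gamma_\epsilon$ is the counterclockwise circle $\alpha=\lambda+\epsilon e^{s\j}$, $s\in\clint{0,2\pi}$. Consequently
\[
\int_{\Gamma(\j\,;r,h;\eta)} \C_\alpha(\A)\j^{-1}(\lambda-\alpha)^{-1}\de\alpha = \int_{\ce(\j\,;h;-\eta,\eta)} \C_\alpha(\A)\j^{-1}(\lambda-\alpha)^{-1}\de\alpha - \int_{\gamma_\epsilon} \C_\alpha(\A)\j^{-1}(\lambda-\alpha)^{-1}\de\alpha.
\]

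Next I would evaluate the two right-hand pieces. On the large arc $\ce(\j\,;h;-\eta,\eta)$, the sectorial bound $\norma{\C_\alpha(\A)}{}\le M/|\alpha|$ (available in the present setting, as in Lemma \ref{L:indipendenza di U(t) dalla curva}) together with $|(\lambda-\alpha)^{-1}|\le 1/(h-\lambda)$ and arc length $2\eta h$ yields a bound $2\eta M/(h-\lambda)$, which tends to $0$ as $h\to\infty$. On the small circle $\gamma_\epsilon$, substituting $\de\alpha=\j\epsilon e^{s\j}\de s$ and $(\lambda-\alpha)^{-1}=-\epsilon^{-1}e^{-s\j}$ and using that all scalar factors commute inside $\CC_\j$, the integrand collapses to $-\C_{\lambda+\epsilon e^{s\j}}(\A)\de s$; by the norm continuity of $\alpha\longmapsto\C_\alpha(\A)$ established in Corollary \ref{cor:inv-int-A}, this integral tends to $-2\pi\,\C_\lambda(\A)$ as $\epsilon\to 0$.

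Finally, letting $h\to\infty$ with $\epsilon$ still fixed kills the large-arc contribution and, by \eqref{eq:limit}, turns the truncated integral on the left into $\int_{\Gamma(\j\,;r;\eta)}\C_\alpha(\A)\j^{-1}(\lambda-\alpha)^{-1}\de\alpha$; letting $\epsilon\to 0$ then evaluates the remaining piece as $-(-2\pi\,\C_\lambda(\A))=2\pi\,\C_\lambda(\A)$. Dividing by $2\pi$ gives the stated identity. I expect the two main technical points to be the careful bookkeeping of orientations on $\partial U_h$ and the use of the sectorial decay on the outer arc, the latter being essential even for convergence of the integral over the unbounded contour $\Gamma(\j\,;r;\eta)$.
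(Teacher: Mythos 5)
Your proof is correct, and its opening stage coincides with the paper's: apply Corollary \ref{cor:inv-int-A} to the truncated sector with a neighbourhood of $\lambda$ removed, then let $h\to\infty$ and use the decay $\norma{\C_\alpha(\A)}{}\le M/|\alpha|$ to make the outer arc $\ce(\j\,;h;-\eta,\eta)$ contribution vanish; your bookkeeping of the orientation of $\partial U_h$ matches the paper's decomposition. Where you genuinely diverge is in evaluating the piece around $\lambda$. You remove a circle $\gamma_\epsilon$ of radius $\epsilon$, observe that in the commutative plane $\CC_\j$ the factor $\j^{-1}(\lambda-\alpha)^{-1}\,\de\alpha$ reduces on $\gamma_\epsilon$ to $-\de s$, so the integrand collapses to $-\C_{\lambda+\epsilon e^{s\j}}(\A)\,\de s$, and then use the operator-norm continuity of $\alpha\longmapsto\C_\alpha(\A)$ (asserted in Corollary \ref{cor:inv-int-A}) to let $\epsilon\to 0$ and extract $-2\pi\,\C_\lambda(\A)$. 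The paper instead keeps a disk $D_R$ of \emph{fixed} radius, changes variables by $\Phi(\alpha)=(\alpha-\lambda)^{-1}$, and invokes Lemma \ref{L:lemma su Phi} (relating $\C_\alpha(\A)$ to $\C_{\Phi(\alpha)}(\B)$ for the bounded operator $\B=-\C_\lambda(\A)$) together with Lemma \ref{L:identity as integral of the resolvent} (namely $\Id=\frac{1}{2\pi}\int_{\partial B_\j(R)}\C_\beta(\B)\,\j^{-1}\de\beta$) to identify the circle integral with $-\B$. Your route is the more elementary one — a direct residue computation relying only on continuity — while the paper leverages the bounded-operator identities it has already built; both are valid. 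A minor remark that applies equally to the paper's own proof: the convergence of the integral over the unbounded rays already uses the sectorial estimate $\norma{\C_\alpha(\A)}{}\le M/|\alpha|$, which is not listed among the hypotheses of the lemma itself; you are right to flag that it is inherited from the surrounding setting.
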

\begin{proof}
Choose $\lambda \in \RR$ with $\lambda>r$. First of all we observe that 
\begin{align}
  & \sp \frac{1}{2\pi} \int_{\Gamma(\j\,;r;\eta)} \C_\alpha(\A) \, \j^{-1}(\lambda - \alpha)^{-1} \de \alpha=\frac{1}{2\pi} \int_{-\eta}^\eta \C_{r e^{\theta\j}}(\A) re^{\theta \j} (\lambda - r e^{\theta\j})^{-1} \de \theta \notag \\
&\sp-\frac{1}{2\pi} \int_{-\infty}^{-r} \C_{-\rho e^{-\eta \j}}(\A) \, \j^{-1} e^{-\eta \j} (\lambda+\rho e^{-\eta\j})^{-1}\de \rho\ +\frac{1}{2\pi} \int_r^{\infty} \C_{\rho e^{\eta \j}}(\A) \, \j^{-1} e^{\eta \j}(\lambda - \rho e^{\eta\j})^{-1}\de \rho. \notag
\end{align}
The convergence of the last two integrals follows from the estimate
\begin{align*}
&\left\|\int_{-\infty}^{-r} \C_{-\rho e^{-\eta \j}}(\A) \, \j^{-1} e^{-\eta \j} (\lambda+\rho e^{-\eta\j})^{-1}\de \rho\right\|\\
&+\left\|\int_r^\infty \C_{\rho e^{\eta \j}}(\A) \, \j^{-1}e^{\eta \j}(\lambda - \rho e^{\eta\j})^{-1}\de \rho \right\| \leq 2\int_r^\infty \frac{M}{\rho|\lambda -\rho e^{\eta\j}|} \de \rho.
\end{align*}
Let $h,R \in \RR$ such that $h>\lambda+1$ and $1/R=\min\{\lambda-r,1\}/2$. Define an open subset $U_h$ of $\CC_\j$ and a closed subset $D_R$ of $\CC_\j$ contained in $U_h$ as follows:
\[
U_h:=\big\{\rho e^{\theta\j} \in \CC_\j \, : \, (\rho,\theta) \in \opint{r,h} \times \opint{-\eta,\eta}\big\}
\; \; \mbox{ and } \; \;
D_R:=\big\{\alpha \in \CC_\j \, : \, |\alpha-\lambda| \leq 1/R\big\}.
\]
Observe that the boundary of $U \setminus D_R$ decomposes as the formal sum $\ce(\j \, ;h;-\eta,\eta)-\Gamma(\j \, ;r,h;\eta)-\partial D_R$ of $C^1$-paths of $\CC_\j$. Moreover, since $\A$ is $\delta$-sectorial, the closure of $U$ is contained in $\rho_\s(\A) \cap \CC_\j$. In this way, Corollary \ref{cor:inv-int-A} implies that
\begin{align*}
&\frac{1}{2\pi} \int_{\Gamma(\j \,;r,h;\eta)} \C_\alpha(\A) \, \j^{-1} (\lambda - \alpha)^{-1}\de \alpha\\
&=-\frac{1}{2\pi} \int_{\partial D_R} \C_\alpha(\A) \, \j^{-1} (\lambda - \alpha)^{-1}\de \alpha +\frac{1}{2\pi} \int_{\ce(\j \, ;h;-\eta,\eta)} \C_\alpha(\A) \, \j^{-1} (\lambda - \alpha)^{-1}\de \alpha.
\end{align*}
On the other hand, we have that
\begin{align}
&\left\|\int_{\ce(\j \, ;h;-\eta,\eta)} \C_\alpha(\A) \, \j^{-1} (\lambda - \alpha)^{-1}\de \alpha \right\| \notag \\
&=\left\| \int_{-\eta}^\eta \C_{h e^{\theta\j}}(\A) \j^{-1} \, \j \, h e^{\theta \j} a (\lambda - he^{-\theta \j})^{-1}\de \theta\right\| \notag \\
& \leq  \int_{-\eta}^\eta \frac{M}{|\lambda - h e^{-\theta \j}|} \de \theta \le \frac{2\eta M}{h-\lambda} \to 0 \quad \text{as $h \to \infty$} \notag
\end{align}
and hence
\begin{align*}
&\frac{1}{2\pi} \int_{\Gamma(\j \,;r;\eta)} \C_\alpha(\A) \, \j^{-1} (\lambda - \alpha)^{-1}\de \alpha=\lim_{h \rightarrow \infty}\frac{1}{2\pi} \int_{\Gamma(\j \,;r,h;\eta)} \C_\alpha(\A) \, \j^{-1} (\lambda - \alpha)^{-1}\de \alpha\\
&=-\frac{1}{2\pi} \int_{\partial D_R} \C_\alpha(\A) \, \j^{-1} (\lambda - \alpha)^{-1}\de \alpha.
\end{align*}
Define $\B:=-\C_\lambda(\A) \in \Lin^\r(X)$ and the map $\Phi:\alg \setminus \{\lambda\} \funzione \alg$ by $\Phi(\alpha):=(\alpha-\lambda)^{-1}$. By \eqref{relation for rho(A) and rho(B)}, we know that 
$\sigma_\s(\B) \cap \CC_\j \subseteq \Phi(\CC_j \setminus D_R)=B_\j(R)$. Now, using the change of variable $\beta=\Phi(\alpha)$ and Lemmas \ref{L:identity as integral of the resolvent} and \ref{L:lemma su Phi}, we get
\begin{align}
-\frac{1}{2\pi} \int_{\partial D_R} \C_\alpha(\A) \, \j^{-1} (\lambda - \alpha)^{-1}\de \alpha & = -\frac{1}{2\pi} \int_{\partial D_R}\B \C_{\Phi(\alpha)}(\B) \, \j^{-1} \Phi(\alpha)^2\de \alpha \notag \\
& = \B \frac{1}{2\pi} \int_{-\partial B_\j(R)} \C_\beta(\B) \, \j^{-1} \de \beta=-\B. \notag
\end{align}
In conclusion, we have that
\[
\frac{1}{2\pi} \int_{\Gamma(\j \,;r;\eta)} \C_\alpha(\A) \, \j^{-1} (\lambda - \alpha)^{-1}\de \alpha=-\B=\C_\lambda(\A). 
\]
The proof is complete.
\end{proof}

We are now in position to prove our main theorem about spherical sectorial operator.

\begin{Thm}
Let $D(\A)$ be a dense right $\alg$-submodule of $X$ and let $\A : D(\A) \lra X$ be a spherical $\delta$-sectorial operator for which there exists $M \in \opint{0,\infty}$ such that
\begin{equation}\label{eq:estimate}
\norma{\C_\alpha(\A)}{} \le \frac{M}{|\alpha|} \qquad \forall \alpha \in \Sigma_\delta.
\end{equation}
Fix $\j \in \su_\alg$ and define the map $\T : \clsxint{0,\infty} \funzione \Lin^r(X)$ by setting
\begin{align}
  & \T(0) := \Id, \\
  & \T(t) := \frac{1}{2\pi} \int_{\Gamma(\j \,;r;\eta)} \C_\alpha(\A) \, \j^{-1} e^{t\alpha}\de \alpha \label{T(t), t > 0} \qquad \forall t > 0,
\end{align}
for some $r \in \opint{0,\infty}$ and $\eta \in \opint{\pi/2,\pi/2+\delta}$. Then $\T$ is the semigroup  generated by $\A$ and the following equality holds:
\begin{equation} \label{eq:int-C}
\C_\lambda(\A) = \R_\lambda(\A) = \int_0^\infty e^{-t\lambda}\T(t) \de t \qquad \forall \lambda > 0.
\end{equation}
As a consequence, the integral in \eqref{T(t), t > 0} is independent of $\j \in \su_\alg$ and $\T$ is analytic in time.
\end{Thm}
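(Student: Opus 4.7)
The plan is to combine Lemma \ref{L:indipendenza di U(t) dalla curva} (which provides $\T'(t)x = \A\T(t)x$ and a uniform operator bound for $\T(t)$) with Lemma \ref{Clambda come integrale} (the integral representation of $\C_\lambda(\A)$) in order to verify the hypotheses of Theorem \ref{T:semigroups and diff eq-A}(b); this will identify $\A$ as the generator of the unique strongly continuous semigroup, which must then coincide with $\T$.

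First I would prove the Laplace transform identity
\[
\int_0^\infty e^{-\lambda t}\T(t)x\,dt = \C_\lambda(\A)x \qquad \forall \lambda > r,\ \forall x \in X.
\]
Since $\norma{\C_\alpha(\A)}{} \le M/|\alpha|$ on $\Sigma_\delta$ and $\re\alpha < 0$ along the two rays of $\Gamma(\j;r;\eta)$, the integrand $e^{-\lambda t}\C_\alpha(\A)\j^{-1}e^{t\alpha}$ is absolutely integrable in $(t,\alpha)\in[0,\infty)\times\Gamma$; Fubini lets me swap the two integrals, the inner $t$-integral yields $(\lambda-\alpha)^{-1}$, and Lemma \ref{Clambda come integrale} concludes.

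Next I would establish the commutation $\A\T(r)y = \T(r)\A y$ for $y\in D(\A)$ and $r>0$, together with strong continuity of $\T$ at $0$. The commutation follows from $\C_\alpha(\A)\A y = \A\C_\alpha(\A)y$ (itself a consequence of \eqref{Q e A commutano} and the formula $\C_\alpha(\A)=\Q_\alpha(\A)\alpha^c-\A\Q_\alpha(\A)$) combined with the closedness of $\A$ applied to the integral representation of $\T(r)$. Together with Lemma \ref{L:indipendenza di U(t) dalla curva}, for $x\in D(\A)$ and $0<s<t$ this yields
\[
\T(t)x - \T(s)x = \int_s^t \T(\tau)\A x\,d\tau,
\]
of norm at most $(t-s)(\sup_\tau\norma{\T(\tau)}{})\norma{\A x}{}$, so $\T(t)x$ is Cauchy as $t\to 0^+$ and admits a limit $\T_0 x$. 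Substituting $s=\lambda t$ in the Laplace identity gives $\lambda\C_\lambda(\A)x = \int_0^\infty e^{-s}\T(s/\lambda)x\,ds$; dominated convergence then yields $\lambda\C_\lambda(\A)x\to\T_0 x$ as $\lambda\to\infty$, while $\lambda\C_\lambda(\A)x - x = \C_\lambda(\A)\A x = O(1/\lambda)$ forces $\T_0 x = x$. Density of $D(\A)$ and the uniform bound on $\T$ extend strong continuity of $\T$ to all of $X$.

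Finally I would invoke Theorem \ref{T:semigroups and diff eq-A}(b). For $x\in D(\A)$ the map $u(t):=\T(t)x$ lies in $C^1(\clsxint{0,\infty};X)$, takes values in $D(\A)$, and solves $u'=\A u$ with $u(0)=x$, which gives existence. For uniqueness, if $v$ is any other solution then the product rule combined with the commutation above yields $\frac{d}{ds}[\T(t-s)v(s)] = -\A\T(t-s)v(s) + \T(t-s)\A v(s) = 0$ on $[0,t]$, so $v(t)=\T(t)x$. Theorem \ref{T:semigroups and diff eq-A} now makes $\A$ the generator of a unique strongly continuous semigroup $\tilde\T$, which by this very uniqueness must be $\T$ itself; the Laplace identity gives \eqref{eq:int-C}. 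Independence from $\j$ is then automatic, since $\A$ is $\j$-independent and its generated semigroup is unique. Analyticity in time follows from the integral representation: for complex $t$ in a sector of $\CC_\j$ around $\opint{0,\infty}$ where $\re(t\alpha)$ remains uniformly negative on the rays of $\Gamma$, the same absolute-integrability estimates make $\T(t)$ a holomorphic extension. The main obstacle, I expect, is proving uniqueness of the Cauchy problem without circularly invoking generation; this is precisely what the preliminary commutation $\A\T(r)=\T(r)\A$ on $D(\A)$ is designed to unlock.
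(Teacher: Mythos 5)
Your proof is correct, and it takes a genuinely different route from the paper's. Both arguments open the same way: the Fubini--fundamental-theorem computation combined with Lemma~\ref{Clambda come integrale} yields the Laplace identity $\C_\lambda(\A)=\int_0^\infty e^{-t\lambda}\T(t)\,\de t$, and Lemma~\ref{L:indipendenza di U(t) dalla curva} supplies $\sup_{t>0}\norma{\T(t)}{}<\infty$ and $\tfrac{\de}{\de t}\T(t)x=\A\T(t)x$. From this point the paper \emph{differentiates} the Laplace identity $n-1$ times and uses the resolvent equation \eqref{resolvent equation} to get the full Feller--Miyadera--Phillips estimate $\norma{\C_\lambda(\A)^n}{}\le C/(\lambda-r)^n$, invokes the generation theorem (Theorem~\ref{T:A-generation thm}) to produce a semigroup $\U$ generated by $\A$, and then identifies $\T$ with $\U$ on $D(\A)$ via the uniqueness half of Theorem~\ref{T:semigroups and diff eq-A}. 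You instead bypass the generation theorem and verify condition~$(\mr{b})$ of Theorem~\ref{T:semigroups and diff eq-A} directly: you establish the commutation $\T(r)\A y=\A\T(r)y$ on $D(\A)$ (which does hold, via \eqref{Q e A commutano}, formula \eqref{risolvente sul dominio}, and closedness of $\A$ pushed through the Bochner line integral), you obtain strong continuity of $\T(\cdot)x$ at $t=0$ by the Lipschitz bound $\norma{\T(t)x-\T(s)x}{}\le C\,|t-s|\,\norma{\A x}{}$ together with the rescaled-Laplace computation $\lambda\C_\lambda(\A)x\to\T_0x$ and the estimate $\lambda\C_\lambda(\A)x-x=\C_\lambda(\A)\A x=O(1/\lambda)$, and you prove uniqueness of the abstract Cauchy problem with the classical device $\tfrac{\de}{\de s}[\T(t-s)v(s)]=0$ (which the commutation makes available without any circular appeal to generation). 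What the paper's route buys is brevity, since the heavy machinery of Section~\ref{S:str cont semigroups in A-modules} is simply reused. What your route buys is that it avoids the $n$-th power estimates entirely and handles the regularity of $\T(\cdot)x$ at $t=0$ explicitly; indeed, when the paper invokes Theorem~\ref{T:semigroups and diff eq-A} to conclude $\T(t)x=\U(t)x$, it does so without first verifying that $\T(\cdot)x\in C^1(\clsxint{0,\infty};X)$ with $\T(0)x=x$, a gap which your $t\to 0^+$ analysis would fill if inserted into the paper's own proof.
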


\begin{proof}
By Lemma \ref{L:indipendenza di U(t) dalla curva}, we know that the integral in \eqref{T(t), t > 0} is convergent in $\Lin^\r(X)$, does not depend on the choice of $r>0$ and $\eta \in \opint{\pi/2,\pi/2+\delta}$ and there exists a constant $C \ge 0$ such that
\begin{equation}
  \norma{\T(t)}{} \le C \qquad \forall t \ge 0.
\end{equation}
Fix $\lambda > 0$ and define $\Gamma:=\Gamma(\j \,;\lambda/2;\eta)$. Choose $L>0$. Since $\lambda \in \RR$ and $\alpha \in \CC_\j$ commute, the Fubini theorem, the fundamental theorem of calculus and Lemma \ref{Clambda come integrale} ensure that
\begin{align}\label{int_0^T e^-tl T(t)}
\int_0^L e^{-t\lambda} \T(t) \de t 
&=\frac{1}{2\pi} \int_\Gamma \int_0^L \C_\alpha(\A) \, \j^{-1} e^{t(\alpha - \lambda)} \de \alpha \notag \\
&=\frac{1}{2\pi} \int_\Gamma \C_\alpha(\A) \, \j^{-1} e^{L(\alpha - \lambda)}(\alpha - \lambda)^{-1} \de \alpha-
\frac{1}{2\pi} \int_\Gamma \C_\alpha(\A) \, \j^{-1} (\alpha - \lambda)^{-1}\de \alpha \notag \\
&=\frac{1}{2\pi} \int_\Gamma \C_\alpha(\A) \, \j^{-1} e^{L(\alpha - \lambda)}(\alpha - \lambda)^{-1}\de \alpha + \C_\lambda(\A). \notag
\end{align}
Since
\begin{align}
&\left\| \int_\Gamma \C_\alpha(\A) \, \j^{-1} e^{L(\alpha - \lambda)}(\alpha - \lambda)^{-1}\de \alpha \right\| \notag\\
&\le \int_\Gamma \frac{M}{|\alpha|} \frac{e^{L(\re(\alpha) - \lambda)}}{|\alpha - \lambda|} |\de \alpha| \le \int_\Gamma \frac{M}{|\alpha|} \frac{e^{-L\lambda/2}}{|\alpha - \lambda|} |\de \alpha| \to 0 \quad \text{as $L \to \infty$}, \notag
\end{align}
formula \eqref{eq:int-C} is proved. Differentiating under the integral sign in the mentioned formula, we find
\begin{equation}
\frac{\de^{n-1}\ }{\de\lambda^{n-1}} \R_\lambda(\A) = (-1)^{n-1}\int_0^\infty \T(t) t^{n-1}e^{-t\lambda} \de t
\end{equation}
Since $\frac{\de^{n-1}\ }{\de\lambda^{n-1}} \R_\lambda(\A) = (-1)^{n-1}(n-1)! \R_\lambda(\A)^n$, formula \eqref{resolvent equation} implies that
\begin{equation}
\C_\lambda(\A)^n = \R_\lambda(\A)^n = \frac{1}{(n-1)!} \int_0^\infty\T(t) t^{n-1}e^{-t\lambda} \de t \qquad \forall \lambda > 0.
\end{equation}
It follows that
\begin{equation}
  \norma{\C_\lambda(\A)^n}{} \le \frac{1}{(n-1)!} \int_0^\infty  C t^{n-1}e^{-t\lambda} \de t = \frac{C}{\lambda^n}  \le
  \frac{C}{(\lambda - r)^n}
  \qquad \forall \lambda > r.
\end{equation}
Hence we can apply Theorem \ref{T:A-generation thm} and deduce that $\A$ generates a strongly continuous semigroup $\U(t)$ such that 
\[
\norma{\U(t)}{} \le C e^{rt}.
\]
Moreover, for every $x \in D(\A)$, $\U(\cdot)x$ is the unique solution of the Cauchy problem
\begin{align}
  & u'(t) = \A u(t), \qquad t > 0, \\
  & u(0) = x.
\end{align}
On the other hand, we know by Lemma \ref{L:indipendenza di U(t) dalla curva} that
\begin{equation}
  \frac{\de\ }{\de t} \T(t)x = \A \T(t)x \qquad \forall t > 0,
\end{equation}
therefore by Theorem \ref{T:semigroups and diff eq-A}
\[
  \T(t)x = \U(t)x \qquad \forall t > 0, \quad \forall x \in D(\A)
\]
which yields $\T = \U$.
\end{proof}


\section{Appendix}\label{S:appendix}
In this short section, we review the notion of line integral for vector functions with values in a Banach $\alg$-bimodule.

Let $\alg$ be a real algebra satisfying \eqref{eq:assumption-1} and \eqref{eq:assumption-2}.
Let $V$ be a Banach $\alg$-bimodule and let $\gamma : \clint{a,b} \funzione \alg$ be a piecewise $C^1$ function. Denote by 
$|\gamma| \subseteq \alg$ the
image 
of $\gamma$. Given continuous functions $F :|\gamma| \funzione V$ and $f:|\gamma| \lra \alg$, we set
\begin{equation}
  \int_\gamma F(\alpha) \de \alpha \, f(\alpha):= 
  \int_a^b F(\gamma(s)) \gamma'(s) f(\gamma(s)) \de s,
\end{equation}
where the last integral is a Bochner integral. Let us recall briefly the definition of the Bochner integral in this new context. Given $m \in \enne$, $S_k \in V$ and Borel subsets $B_k$ of $\clint{a,b}$ for $k=1,\ldots,m$, we consider the 
measurable 
step function 
$S: \clint{a,b} \funzione V$ defined by
\[
S :=\sum_{k=1}^m S_k\chi_{B_k},
\]
where $\chi_{B_k}$ is the characteristic function of $B_k$ in $\clint{a,b}$. We set
\[
 \int_a^b S(s) \de s:=\sum_{k=1}^m\mr{m}(B_k)S_k,
\]
where $\mr{m}(B_k)$ is the Lebesgue measure of $B_k$. Denote by $\mi{St}(\clint{a,b};V)$ the set of measurable step functions on $\clint{a,b}$ with values in $V$, endowed with the $L^1$-seminorm $\|S\|:=\int_a^b\norma{S(s)}{V} \de s$, where $\norma{\cdot}{V}$ is the norm of~$V$. The map $\mi{St}(\clint{a,b};V) \lra V:S \longmapsto \int_a^b S(s) \de s$ is $\erre$-linear and continuous. Therefore, it admits a continuous extension
$I:L^1(\clint{a,b};V) \funzione V$ to the completion of $\mi{St}(\clint{a,b};V)$. In the commutative setting this extension is usually  called \emph{Bochner integral}. Now one can define
\begin{equation}
 \int_a^b F(\gamma(s)) \gamma'(s) f(\gamma(s)) \de s:= I\big((F \circ \gamma)\gamma'(f \circ \gamma)\big).
\end{equation}
For simplicity, if $f$ coincides with the function $\mathbf{1}:|\gamma| \lra \alg$ constantly equal to $1$, then we write $\int_\gamma F(\alpha)\de \alpha$ in place of $\int_\gamma F(\alpha)\de \alpha \, \mathbf{1}(\alpha)$. Evidently, we have: $\int_\gamma F(\alpha)\de \alpha=\int_a^bF(\gamma(s))\gamma'(s) \de s$. We underline that, if $\gamma'(s)$ and $f(\gamma(s))$ commute a.e. in $s \in \clint{a,b}$, then it holds:
\[
\int_\gamma F(\alpha)\de \alpha \, f(\alpha)=\int_\gamma F(\alpha)f(\alpha)\de \alpha.
\]

We warn the reader that, if $\alg$ is associative, but not commutative, and if $X$ is a Banach $\alg$-bimodule, then in general, for $\B : \Omega \funzione \Lin^r(X)$ continuous and $x \in X$, we have
\begin{equation}
  \left(\int_\gamma \B(\alpha) \de \alpha \right)(x) \neq \int_\gamma \B(\alpha)(x) \de \alpha,
\end{equation}
where on the left-hand side there is the Bochner integral of a function with values in $V = \Lin^r(X)$, and on the right-hand side
we have the integral of a function with values in $V = X$. In fact, we have the following identities:
\begin{equation}\label{formula per bochner line int}
  \left(\int_\gamma \B(\alpha) \de \alpha\right)(x) = 
  \int_a^b \big(\B(\gamma(s))\gamma'(s)\big)(x) \de s
\end{equation}
and
\begin{equation}
   \int_\gamma \B(\alpha)(x) \de \alpha = 
  \int_a^b \big(\B(\gamma(s))(x)\big) \gamma'(s) \de s.
\end{equation}



\end{document}